\numberwithin{equation}{section}
\theoremstyle{plain}
\newtheorem{thm}{Theorem}[section]
\newtheorem{lemma}[thm]{Lemma}
\newtheorem{prop}[thm]{Proposition}
\theoremstyle{definition}
\newtheorem{defn}{Definition}[section]
\newtheorem{rmrk}[thm]{Remark}
\DeclareMathOperator{\ch}{Ch}
\DeclareMathOperator{\sch}{SCh}
\DeclareMathOperator{\disc}{disc}
\DeclareMathOperator{\en}{End}
\DeclareMathOperator{\Hom}{Hom}
\DeclareMathOperator{\LHS}{LHS}
\DeclareMathOperator{\RHS}{RHS}
\DeclareMathOperator{\res}{Res}
\DeclareMathOperator{\spa}{Span}
\DeclareMathOperator{\tr}{Tr}
\DeclareMathOperator{\str}{STr}
\DeclareMathOperator{\zhu}{Zhu}
\newcommand{\al}{\alpha}
\newcommand{\G}{\Gamma}
\newcommand{\ga}{\gamma}
\newcommand{\D}{\Delta}
\newcommand{\eps}{\epsilon}
\newcommand{\la}{\lambda}
\newcommand{\om}{\omega}
\newcommand{\vp}{\varphi}
\newcommand{\C}{\mathbb{C}}
\newcommand{\R}{\mathbb{R}}
\newcommand{\Q}{\mathbb{Q}}
\newcommand{\Z}{\mathbb{Z}}
\newcommand{\g}{\mathfrak{g}}
\newcommand{\h}{\mathfrak{h}}
\newcommand{\CC}{\mathcal{C}}
\newcommand{\HH}{\mathcal{H}}
\newcommand{\M}{\mathcal{M}}
\newcommand{\OO}{\mathcal{O}}
\newcommand{\V}{\mathcal{V}}
\newcommand{\W}{\mathcal{W}}
\newcommand{\slmat}
{\left(\begin{smallmatrix}{\sf a} & {\sf b} \\ {\sf c} & {\sf d} \end{smallmatrix}\right)}
\newcommand{\slmatbig}{\left(\begin{array}{cc}{\sf a}&{\sf b}\\{\sf c}&{\sf d}\\ \end{array}\right)}
\newcommand{\ov}{\overline}
\newcommand{\oq}{\overline{q}}
\newcommand{\vac}{\left|0\right>}
\newcommand{\DD}{\nabla}
\newcommand{\ide}{X}
\newcommand{\suv}{\xi}
\title{Modular Invariance for Twisted Modules over a Vertex Operator Superalgebra}
\date{}
\author{Jethro Van Ekeren\footnote{email: jethrovanekeren@gmail.com}\\ \small{\emph{Department of Mathematics, MIT, Cambridge, Massachusetts 02139, USA}}}
\begin{document}

\maketitle

\begin{abstract}
\noindent The purpose of this paper is to generalize Zhu's theorem about characters of modules over a vertex operator algebra graded by integer conformal weights, to the setting of a vertex operator superalgebra graded by rational conformal weights. To recover $SL_2(\Z)$-invariance of the characters it turns out to be necessary to consider twisted modules alongside ordinary ones. It also turns out to be necessary, in describing the space of conformal blocks in the supersymmetric case, to include certain `odd traces' on modules alongside traces and supertraces. We prove that the set of supertrace functions, thus supplemented, spans a finite dimensional $SL_2(\Z)$-invariant space. We close the paper with several examples.
\end{abstract}

\section{Introduction}


Let $\g$ be a finite dimensional simple Lie algebra and let $\hat{\g}$ be the corresponding affine Kac-Moody algebra. In \cite{KacPeter} Kac and Peterson expressed the (normalized) characters of the integrable $\hat{\g}$-modules in terms of Jacobi theta functions. In particular they showed that the normalized characters of the integrable modules at a fixed level $k \in \Z_+$ span an $SL_2(\Z)$-invariant vector space. Later the monstrous moonshine conjecture, relating the monster finite simple group to the modular $j$-function, was resolved by Borcherds \cite{Bor}, \cite{Borlater} using generalized Kac-Moody algebras and the monster vertex operator algebra of Frenkel, Lepowsky and Meurman \cite{FLM}. Later still Zhu \cite{Zhu} established $SL_2(\Z)$-invariance of the characters of an arbitrary $C_2$-cofinite rational vertex operator algebra with integer conformal weights, see Theorem \ref{Zhuthm} below. The modular invariance of the two prior examples can be recovered as special cases of Zhu's result. The main result of this paper, Theorem \ref{mythm} below, is a generalization of Theorem \ref{Zhuthm}.

Recall that a vertex operator algebra (VOA) consists of a vector space $V$, two distinguished vectors $\vac$ and $\om$ (called the vacuum and Virasoro vector, respectively), and an assignment to each vector $u \in V$ of a `field' $Y(u, z) = \sum_{n \in \Z} u_{(n)} z^{-n-1}$ where $u_{(n)} \in \en V$. These data are to satisfy certain axioms (see Definition \ref{VOSAdefn}).

By definition $V$ is $C_2$-cofinite if $V_{(-2)}V$ has finite codimension in $V$.

If $Y(\om, z) = L(z) = \sum_{n \in \Z} L_n z^{-n-2}$ then the operators $L_n$ form a representation on $V$ of the Virasoro algebra with some central charge $\mathfrak{c}$. The energy operator $L_0$ acts semisimply. The eigenvalue of an $L_0$-eigenvector $u \in V$ is called its conformal weight and is denoted $\D_u$. We write $V_k$ for the set of vectors of conformal weight $k$.

A module over a VOA $V$ is a vector space $M$ together with a field
\begin{align} \label{indexexpl}
Y^M(u, z) = \sum_{n \in \Z} u^M_{(n)} z^{-n-1} = \sum_{n \in -\D_u + \Z} u^M_{n} z^{-n-\D_u}
\end{align}
assigned to each $u \in V$. These data are to satisfy certain axioms, see Definition \ref{twisteddefn}. A positive energy $V$-module is an $\R_+$-graded $V$-module $M = \oplus_j M_j$, with finite dimensional graded pieces, such that $u^M_n M_j \subseteq M_{j-n}$ for each $u \in V$, $n \in -\D_u + \Z$. We say that $V$ is rational if it has finitely many irreducible positive energy modules and every positive energy $V$-module is a direct sum of irreducible ones.

Given $V$ as above, Zhu introduced a second VOA structure $Y[u, z] = \sum_{n \in \Z} u_{([n])} z^{-n-1}$ on $V$, and a new Virasoro element $\tilde{\om} \in V$ (see Definition \ref{Zhustructure}). Let $L_{[0]}$ be the new energy operator and $V_{[k]}$ the set of vectors of conformal weight $k$ with respect to $\tilde{\om}$.

Let $\HH$ denote the complex upper half plane and set $q = e^{2\pi i \tau}$ where $\tau$ is a variable on $\HH$. Recall the standard weight $k$ action of the modular group $SL_2(\Z)$ on holomorphic functions on $\HH$:
\begin{align} \label{sl2action}
[f \cdot A](\tau) = ({\sf c}\tau+{\sf d})^{-k} f\left(\frac{{\sf a}\tau + {\sf b}}{{\sf c}\tau + {\sf d}}\right) \quad \text{for} \quad A = \slmatbig \in SL_2(\Z).
\end{align}
\vspace{.3cm}
\begin{thm}[Zhu] \label{Zhuthm}
Let $V$ be a $C_2$-cofinite rational VOA with non negative integer conformal weights. Let $k \in \Z_+$, $u \in V_{[k]}$, and let $M$ be an irreducible positive energy $V$-module. Then the trace function
\[
\tr_M u^M_0 q^{L_0 - \mathfrak{c}/24}
\]
converges to a holomorphic function $S_M(u, \tau)$ of $\tau \in \HH$. Let $\CC(u)$ denote the span of $S_M(u, \tau)$ as $M$ runs over the set of irreducible positive energy $V$-modules. Then $\CC(u)$ is a finite dimensional vector space invariant under the weight $k$ action of $SL_2(\Z)$.
\end{thm}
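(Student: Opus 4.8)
The plan is to realize $\CC(u)$ as the image, under evaluation at $u$, of a finite-dimensional $SL_2(\Z)$-module of abstract ``conformal blocks on the torus'', following Zhu's strategy. The first task is convergence: using $C_2$-cofiniteness one builds a PBW-type spanning set $\{u^1_{(-n_1)}\cdots u^r_{(-n_r)}\vac:\ n_1\ge\cdots\ge n_r\ge 1\}$ of $V$ from representatives of a basis of $V/V_{(-2)}V$, and, acting with the corresponding operators on an irreducible positive-energy module $M$ and using the commutator formula, one bounds $\dim M_j$ sub-exponentially in $j$. Since the conformal weights of $M$ are bounded below, the series $\sum_j(\tr_{M_j}u^M_0)\,q^{\,j-\mathfrak{c}/24}$ then converges absolutely on $0<|q|<1$ and defines a holomorphic function $S_M(u,\tau)$ on $\HH$.

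Next I would introduce, with $q_i=e^{2\pi i z_i}$, the several-variable trace functions
\[
F_M((a_1,z_1),\dots,(a_n,z_n);\tau)=\tr_M\Big(Y^M(q_1^{L_0}a_1,q_1)\cdots Y^M(q_n^{L_0}a_n,q_n)\Big)\,q^{L_0-\mathfrak{c}/24};
\]
its $n=1$ member is independent of $z$ and equals $\tr_M u^M_0\,q^{L_0-\mathfrak{c}/24}=S_M(u,\tau)$. Using associativity and commutativity of the operators $Y^M$ together with the $q$-expansions of the Weierstrass function $\wp(z,\tau)$, its variants $P_m(z,\tau)$, and the Eisenstein series $E_{2m}(\tau)$, one proves Zhu's recursion identities: (i) each $F_M$ is the $q$-expansion of a function elliptic in every $z_i$ up to prescribed poles along $z_i\in z_j+\Z\tau+\Z$; (ii) inserting one more argument $(a_1,z_1)$ expresses $F_M$ in terms of the functions $F_M$ with $a_1$ transferred, as a mode, onto the remaining arguments and with one variable fewer, the coefficients lying among the $P_m$ and the $E_{2m}$; and (iii) the insertion of $L_{[-1]}a_i$ equals a $z_i$-derivative of $F_M$, together with an $L_{[0]}$-weight homogeneity constraint. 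For $n=1$ these specialize to $S_M(L_{[-1]}u,\tau)=0$ (since the one-point function is constant in $z$) and to the homogeneity making $u\mapsto S_M(u,\cdot)$ factor through $V_{[k]}$.

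I would then let $\CC$ be the space of all systems of holomorphic functions obeying the formal analogues of (i)--(iii), with no module presupposed, and prove $\dim\CC<\infty$; this is the step I expect to be the main obstacle. The recursion (ii)--(iii) reconstructs an arbitrary conformal block from finitely much data --- its ``leading part'', which by (i)--(ii) is controlled by the finite-dimensional space $V/V_{(-2)}V$, together with finitely many Taylor coefficients at coincident arguments --- so $C_2$-cofiniteness bounds the vertex-algebra-direction data; the delicate point is to control simultaneously the analytic behaviour in $\tau$ so that the formal recursion genuinely has finite-dimensional solution space. Granting this, $SL_2(\Z)$-invariance of $\CC$ reduces to the assertion that (i)--(iii) are preserved under $\tau\mapsto\frac{{\sf a}\tau+{\sf b}}{{\sf c}\tau+{\sf d}}$: the functions $\wp$, $P_m$, and $E_{2m}$ --- including the quasimodular $E_2$ --- transform as (quasi-)modular forms, and the second VOA structure $Y[\,\cdot\,,z]$ on $V$, its Virasoro vector $\tilde{\om}$, and the shift $L_0-\mathfrak{c}/24$ are arranged precisely so that the quasimodular anomalies cancel and the whole system transforms covariantly, with a compensating rescaling of the $z_i$; on the $n=1$, $u\in V_{[k]}$ slice this is exactly the weight-$k$ action \eqref{sl2action}.

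Finally, each $F_M$ lies in $\CC$ by the foregoing, while conversely the evaluation of an arbitrary conformal block yields a ``generalized graded trace'' on $V$ which --- via Zhu's algebra $A(V)$, the correspondence between irreducible positive-energy $V$-modules and irreducible $A(V)$-modules, and the complete reducibility of the module category (rationality) --- must be a linear combination of genuine trace functions; hence $\CC=\spa\{F_M:\ M\ \text{irreducible}\}$, which re-confirms finite-dimensionality but not independently of the $C_2$ bound. Since $\CC(u)=\{F((u,z);\tau):F\in\CC\}$ is the image of the finite-dimensional $SL_2(\Z)$-module $\CC$ under the map $F\mapsto F((u,z);\tau)$, which intertwines the $SL_2(\Z)$-action on $\CC$ with the weight-$k$ action \eqref{sl2action}, the space $\CC(u)$ is finite-dimensional and invariant under the weight-$k$ action of $SL_2(\Z)$, as claimed.
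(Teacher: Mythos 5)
Your overall architecture coincides with the paper's (which follows Zhu): define an abstract space of conformal blocks cut out by the recursion/vanishing conditions together with a heat-type equation in $\tau$, show that this space is finite dimensional and carries the weight-$k$ action of $SL_2(\Z)$, show that each trace function is a conformal block, and finally use the Zhu algebra together with rationality to show that the trace functions exhaust the space. Two steps, however, are left with genuine gaps. First, your convergence argument is incomplete: a sub-exponential bound on $\dim M_j$ controls the character (the case $u=\vac$) but not $\tr_{M_j}u^M_0$ for general $u$, since nothing in your argument bounds the matrix entries of $u^M_0$ on $M_j$ as $j\to\infty$. The paper instead derives convergence from the fact that the formal series satisfies a Fuchsian ODE with a regular singular point at $q=0$ (see Section \ref{ODEsection} and the proof of Theorem \ref{tf-in-cb}), so that the formal solution automatically converges; some such mechanism is needed for general $u$.

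Second --- the step you yourself flag as the main obstacle --- finite dimensionality of the space of conformal blocks is not established by your sketch: ``the recursion reconstructs the block from finitely much data'' does not by itself control the analytic dependence on $\tau$. The missing mechanism is the one in Lemmas \ref{vofingen}--\ref{Lm2reduc} and Theorem \ref{getdiff}: $C_2$-cofiniteness makes $\V/\OO(g,h)$ a finitely generated module over the Noetherian ring of modular forms, so the ascending chain of submodules generated by $u, L_{[-2]}u, L_{[-2]}^2u,\dots$ stabilizes; this yields a monic relation $L_{[-2]}^m u + \sum_i r_i(\tau)L_{[-2]}^i u \in \OO(g,h)$, which via axiom {\bf CB4} becomes an ODE in $q\,\frac{d}{dq}$ with coefficients holomorphic at $q=0$. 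Frobenius--Fuchs theory then delivers simultaneously the convergence, the finite dimensionality (each solution space is finite dimensional and $V$ is spanned modulo $\OO$ by finitely many vectors), and the expansion (\ref{canonform}) whose leading coefficients feed the exhaustion argument. Relatedly, in your last step ``complete reducibility'' alone does not suffice: one must extract the leading Frobenius coefficients as symmetric functions on the Zhu algebra, iterate the subtraction of trace functions, and separately rule out $\log q$ terms, as is done at the end of Section \ref{exhaust} using both parts of Proposition \ref{induc}.
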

Setting $u = \vac$ shows that the graded characters of the irreducible positive energy $V$-modules span an $SL_2(\Z)$-invariant space of weight $0$. Modular forms of other integer weights arise from traces of other elements of $V$.

Theorem \ref{Zhuthm} has been generalized in several directions. Dong, Li and Mason \cite{DLMorbifold} proved a `twisted' version for a VOA $V$ (again required to be rational, $C_2$-cofinite, and with integer conformal weights) together with a finite group $G$ of its automorphisms. We shall describe their result in some detail below.

Dong and Zhao gave further generalizations to the case of a rational $C_2$-cofinite \emph{vertex operator superalgebra (VOSA)} $V$ (see Definition \ref{VOSAdefn}) together with a finite group $G$ of its automorphisms. In \cite{DZ2} these authors dealt with the case of integer conformal weights, and in \cite{DZ} with the case in which even elements of $V$ have conformal weights in $\Z$ and odd elements have conformal weights in $1/2 + \Z$.

In the VOA setting Miyamoto \cite{M2} established $SL_2(\Z)$-invariance of traces of \emph{intertwining operators} of type $\binom{M^2}{M^1 \quad M^2}$. In this context intertwining operators of type $\binom{M^3}{M^1 \quad M^2}$ are certain maps $I : M^1 \rightarrow \Hom(M^2, M^3)[[z, z^{-1}]]$, where $M^1$, $M^2$ and $M^3$ are $V$-modules, satisfying a natural analog of the $V$-module condition. Theorem \ref{Zhuthm} is the special case $M^1 = V$. Later Yamauchi \cite{Y} proved a twisted version of Miyamoto's result. Generalization to traces of arbitrary products of intertwining operators was later achieved by Huang \cite{Huang}. Although we do not consider intertwining operators in this paper, there is a feature in common with these other works: the appearance of modular forms of non integer weight.

Finally we mention some other works in the spirit of \cite{Zhu}. In \cite{Mnonrational} Theorem \ref{Zhuthm} is generalized to nonrational VOAs \cite{Mnonrational} (see also \cite{Arike} in this connection), and in \cite{MasonTuiteZuevsky} and \cite{Jordan} Zhu's recursion relations for $n$-point functions are generalized to the VOSA case.


The present paper is concerned with VOSAs graded by rational (not necessarily integer, nor half-integer) conformal weights. There are many natural examples from which to draw motivation, some of which we review below. We do not assume positivity of conformal weights, we work in the general supersymmetric setting, and we impose no relation between the parity of a vector and its conformal weight. To recover $SL_2(\Z)$-invariance of characters in this setting it turns out to be necessary to consider twisted modules (see Definition \ref{twisteddefn}). It is thus natural to follow \cite{DLMorbifold}, \cite{DZ}, and \cite{DZ2} by working from the outset in the greater generality of a VOSA together with a finite group of its automorphisms, and our main result, Theorem \ref{mythm}, is a generalization of the corresponding main theorems of those papers. Dealing with superalgebras leads naturally to the notion of `odd trace' (see below) which we extend to the VOSA setting. Some complication is introduced into the statement of Theorem \ref{mythm} by the interaction of the odd trace with the twisting.

We now describe the result of \cite{DLMorbifold} in more detail. Let $V$ be a VOA and $G$ a finite group of its automorphisms. For $g \in G$ a $g$-twisted $V$-module is a vector space $M$ together with fields $Y^M(u, z) = \sum_n u^M_{n} z^{-n-\D_u}$ where the sum runs over $n \in \eps_u + \Z$ (instead of $-\D_u + \Z$) where $\eps_u$ is a certain real number depending on $u \in V$ and $g$. There is an obvious notion of `$g$-rational' VOA. Fix commuting elements $g, h \in G$, and let $M$ be a $g$-twisted $V$-module. Setting $Y^{h \cdot M}(u, z) = Y^M(h(u), z)$ equips $M$ with another $g$-twisted $V$-module structure denoted $h \cdot M$. Suppose $M$ is $h$-invariant, meaning that $h \cdot M$ is equivalent to $M$, and let $\ga : M \rightarrow M$ witness the equivalence, i.e.,
\begin{align} \label{gammaprop}
h(u)^M_{n} = \ga^{-1} u^M_{n} \ga \quad \text{for all $u \in V$, $n \in \eps_u + \Z$}.
\end{align}
\begin{thm}[Dong, Li, and Mason] \label{DLMthm}
Let $V$ be a $C_2$-cofinite VOA with integer conformal weights, such that $V_k = 0$ for sufficiently negative $k$. Let $G$ be a finite group of automorphisms of $V$, and suppose $V$ is $g$-rational for each $g \in G$. Let $k \in \Z$, $u \in V_{[k]}$, let $g, h \in G$ commute, and let $M$ be a $h$-invariant irreducible positive energy $g$-twisted $V$-module. Introduce $\ga : M \rightarrow M$ as in equation (\ref{gammaprop}). Then the trace function
\[
\tr_M u^M_0 \ga q^{L_0 - \mathfrak{c}/24}
\]
converges to a holomorphic function $S_M(u, \tau)$ of $\tau \in \HH$. Let $\CC(g, h; u)$ denote the span of $S_M(u, \tau)$ as $M$ runs over the set of $h$-invariant irreducible positive energy $g$-twisted $V$-modules. Then $\CC(g, h; u)$ is a finite dimensional vector space invariant under the weight $k$ action (\ref{sl2action}) of $SL_2(\Z)$ in the sense that
\[
\slmatbig : \CC(g, h; u) \rightarrow \CC(g^{\sf a}h^{\sf c}, g^{\sf b}h^{\sf d}; u).
\]
\end{thm}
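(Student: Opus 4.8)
The plan is to follow Zhu's strategy, adapted to the twisted setting as in \cite{DLMorbifold}. Fix commuting automorphisms $g, h \in G$. First I would introduce an abstract space $\CC(g,h)$ of \emph{$(g,h)$-conformal blocks}: linear maps $S$ sending each homogeneous $u \in V_{[k]}$ to a holomorphic function $S(u,\tau)$ on $\HH$, subject to (i) the normalization $S(\tilde\om_{[1]}u,\tau)=0$ and the ``$L_{[-1]}$-derivative'' relation $S(\tilde\om_{[0]}u,\tau) = \frac{1}{2\pi i}\frac{d}{d\tau}S(u,\tau)$; (ii) a genus-one recursion expressing $S(u_{[-1]}v,\tau)$ as a sum $\sum_{m\geq 0} P_{m+1}(u;\tau)\,S(u_{[m]}v,\tau)$, where the $P_{m+1}(u;\tau)$ are twisted Weierstrass/Eisenstein-type series built from $\tau$, from the mode-shift $\eps_u$ of $u$ (which is governed by $g$), and from the eigenvalue of $h$ on $u$; and (iii) suitable polynomial growth of $S(u,\tau)$ as $\tau\to i\infty$. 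Using $C_2$-cofiniteness --- which transfers to the square-bracket structure --- the recursion reduces $S(u,\tau)$, for arbitrary $u$, to the values of $S$ on a fixed finite-dimensional subspace of $V$; these finitely many functions then span a space closed under $q\frac{d}{dq}$ and hence satisfy a finite-rank system of linear ODEs in $q$, so $\CC(g,h)$ is finite dimensional.

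Second I would verify that each trace function $S_M(u,\tau) = \tr_M u^M_0\,\ga\, q^{L_0 - \mathfrak{c}/24}$ belongs to $\CC(g,h)$. Convergence on $\HH$ follows from the hypothesis $V_k=0$ for $k\ll 0$ together with a dimension bound on the graded pieces $M_j$ extracted from $C_2$-cofiniteness. The genus-one recursion is the elliptic-function analogue of the partial-fractions identity: one expands $(u_{[-1]}v)^M_0$ as a sum of products $u^M_n v^M_{-n}$, moves the operators $u^M_n$ cyclically around the trace, and uses (\ref{gammaprop}) to slide $\ga$ past each $u^M_n$ at the cost of the $h$-eigenvalue of $u$ --- this is precisely what introduces the second twisted characteristic into $P_{m+1}$ --- and then resums the resulting geometric series in $q$. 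This $n$-point recursion, together with the convergence estimate, is the technical core of the argument.

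Third, $SL_2(\Z)$-invariance is inherited from the modular behaviour of the $P_{m+1}$. Under $A = \slmatbig \in SL_2(\Z)$ the twisted series attached to the data $(g,h)$ transforms into $({\sf c}\tau+{\sf d})^{m+1}$ times the series attached to $(g^{\sf a}h^{\sf c},\, g^{\sf b}h^{\sf d})$ --- the $SL_2(\Z)$-action permutes the characteristics in exactly this way --- the only anomaly being the quasi-modular defect of $P_2$ (equivalently of the Eisenstein series $G_2$), which is cancelled by working with the square-bracket Virasoro element $\tilde\om$ in place of $\om$; this is why $u$ is taken in $V_{[k]}$. Hence, writing $S\cdot A$ for the weight-$k$ transform (\ref{sl2action}) of $S\in\CC(g,h)$, one gets $S\cdot A \in \CC(g^{\sf a}h^{\sf c},\, g^{\sf b}h^{\sf d})$, and together with the finite-dimensionality of step one this yields the asserted permutation action of $SL_2(\Z)$ on the spaces $\CC(g,h;u)$.

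Finally, to see that the trace functions actually span $\CC(g,h;u)$ --- so that this space is the \emph{whole} $SL_2(\Z)$-stable space and not merely contained in it --- one invokes $g$-rationality. A $(g,h)$-conformal block is determined by its $q\to 0$ leading coefficients, and these assemble into a symmetric linear functional on the twisted Zhu algebra $A_g(V)$; since $A_g(V)$ is finite dimensional and semisimple, such a functional is a combination of the characters of its irreducible modules, which biject with the irreducible positive-energy $g$-twisted $V$-modules, the $h$-invariant ones contributing their $\ga$-twisted characters. Conversely each of these is realized by an $S_M$, and the recursion plus ODE uniqueness recovers the full block from its leading term, giving the desired spanning. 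The principal obstacle throughout is step two: executing the bookkeeping of the mode-shifts $\eps_u$, the $h$-eigenvalues brought in by (\ref{gammaprop}), and the resulting characteristics of the elliptic functions, while at the same time re-deriving in this twisted, rationally graded setting the two consequences of $C_2$-cofiniteness used above (the reduction to a finite spanning set and the ODE) --- no single conceptual leap, but a long and delicate computation --- is where essentially all the work lies.
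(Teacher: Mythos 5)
Your proposal is correct and follows essentially the same strategy as the paper (and as the cited source \cite{DLMorbifold}): define a space of $(g,h)$-conformal blocks via recursion/annihilation axioms built from twisted Eisenstein--Weierstrass series, use $C_2$-cofiniteness to get finite generation and a Fuchsian ODE, show the $\ga$-twisted trace functions satisfy the axioms by cyclic trace manipulations, read off modularity from the transformation law of the $P$-series, and prove spanning by identifying the leading Frobenius coefficients with symmetric functionals on the semisimple twisted Zhu algebra. This is exactly the architecture of Sections 4--9 here, carried out in the more general VOSA setting.
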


We work with superalgebras (see Sections \ref{definitions} and \ref{symmfunc}) and so there is overlap with \cite{DZ} and \cite{DZ2} in this respect. One difference, though, is that in those papers a subspace of a vector superspace is allowed to be non $\Z/2\Z$-graded, while in the present work we prefer to allow only $\Z/2\Z$-graded subspaces.

Let $A$ be a finite dimensional simple superalgebra, and let $f : A \rightarrow \C$ be a supersymmetric function on $A$, i.e., $f(ab) = p(a, b) f(ba)$ for all $a, b \in A$. Then one of the following is true.
\begin{itemize}
\item $A = \en(\C^{m|k})$, and $f$ is a scalar multiple of the map $a \mapsto\str_{N}(a)$ where $N = \C^{m|k}$ is the unique irreducible $A$-module.

\item $A = Q_n = \en(\C^n)[\suv] / (\suv^2 = 1)$ where $\suv$ is an odd indeterminate, and $f$ is a scalar multiple of the map $a \mapsto \tr_{N}(a \suv)$ where $N = \C^n + \suv \C^n$ is the unique irreducible $A$-module.
\end{itemize}
We shall refer to these two cases as {\sf Type I} and {\sf Type II} respectively. The {\sf Type II} superalgebra $Q_n$ is often called the `queer superalgebra', it can be realized inside $\en(\C^{n|n})$ as the set of block matrices of the form $\left(\begin{smallmatrix}A & B \\ B & A \end{smallmatrix}\right)$ where $A, B \in \en(\C^n)$. The supersymmetric function on $Q_n$, which becomes $\left(\begin{smallmatrix}A & B \\ B & A \end{smallmatrix}\right) \mapsto \tr(B)$, is often called the `odd trace'.

Now let $V$ be a VOSA with rational conformal weights, and let $G$ be a finite group of its automorphisms. Fix commuting elements $g, h \in G$. An associative superalgebra $\zhu_g(V)$ was defined by De Sole and Kac in \cite{DK}, generalizing constructions in \cite{DZother}, \cite{Xu}, \cite{DLMzhualgebra}, \cite{KacWang}, and the original \cite{Zhu} (see Section \ref{coeffs}). This Zhu algebra has the property that there is a functorial bijection $L$ from the set of irreducible $\zhu_g(V)$-modules to the set of irreducible positive energy $g$-twisted $V$-modules. The automorphism $h$ descends to $\zhu_g(V)$ and permutes its irreducible modules. The bijection $L$ restricts to $h$-invariant modules.

Let $M$ be a $h$-invariant irreducible positive energy $g$-twisted $V$-module, $N$ the corresponding $\zhu_g(V)$-module, and $A$ the corresponding simple component of $\zhu_g(V)$. Let $\ga : M \rightarrow M$ satisfy equation (\ref{gammaprop}). If $A$ is of {\sf Type I} then $\ga$ is unique up to a scalar multiple (and is pure even or odd), but if $A$ is of {\sf Type II} then $\ga$ can be chosen to be either even or odd. Indeed if $A = A_0[\suv]/(\suv^2=1)$ then it turns out $h(\suv) = \pm \suv$, and we shall choose the parity of $\ga$ to be even (resp. odd) in the case $+$ (resp. $-$). We now associate to $M$ the supertrace function
\begin{align}\label{giveaway}
S_{M}(u, \tau) = \left\{ \begin{array}{ll}
\str_{M} \left( u_0 \ga \sigma_{M}^{p(\ga)} q^{L_0 - \mathfrak{c}/24} \right) & \text{if $M$ is of {\sf Type I}}, \\
\tr_{M} \left( u_0 \ga \sigma_{M}^{p(\ga)} \xi q^{L_0 - \mathfrak{c}/24} \right) & \text{if $M$ is of {\sf Type II}}, \\
\end{array}\right.
\end{align}
where by $\suv : M \rightarrow M$ we mean the lift to $M$ of the corresponding $\suv : N \rightarrow N$. We now have
\vspace{.3cm}
\begin{thm}[Main Theorem] \label{mythm}
Let $V$ be a $C_2$-cofinite VOSA with rational conformal weights, such that $V_k = 0$ for sufficiently negative $k$. Let $G$ be a finite group of automorphisms of $V$, and suppose $V$ is $g$-rational for each $g \in G$. Let $k \in \Q$, $u \in V_{[k]}$, let $g, h \in G$ commute, and let $M$ be a $h$-invariant irreducible positive energy $g$-twisted $V$-module. Introduce $\ga : M \rightarrow M$ and $S_M$ as in (\ref{giveaway}). Then $S_M$ converges to a holomorphic function of $\tau \in \HH$. Let $\CC(g, h; u)$ denote the span of $S_M(u, \tau)$ as $M$ runs over the set of $h$-invariant irreducible positive energy $g$-twisted $V$-modules. Then $\CC(g, h; u)$ is a finite dimensional vector space invariant under the weight $k$ action (\ref{sl2action}) of $SL_2(\Z)$ in the sense that
\begin{align}\label{whereCBgo}
\slmatbig : \CC(g, h; u) \rightarrow \CC(g^{\sf a}h^{\sf c}, g^{\sf b}h^{\sf d}; u).
\end{align}
\end{thm}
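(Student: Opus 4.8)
The plan is to follow the strategy pioneered by Zhu and refined by Dong--Li--Mason, adapted to the rational-weight supersymmetric setting. The core idea is to show that the span $\CC(g,h;u)$ of supertrace functions, as $M$ ranges over $h$-invariant irreducible positive-energy $g$-twisted modules, coincides with an \emph{a priori} larger but intrinsically defined space of \emph{conformal blocks}: namely, the space of linear functionals $S$ on $V$ (valued in holomorphic functions on $\HH$) satisfying Zhu-type recursion relations that express $S(u_{([n])}v, \tau)$ for $n \geq 0$ in terms of $S(v,\tau)$ and its derivatives, with coefficients built from Eisenstein-like series $P_k(\tau)$ and their twisted analogues. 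First I would establish the recursion: for a fixed module $M$, derive the identity relating $\str_M(u_{([n])}v)_0 \gamma \sigma^{p(\gamma)} q^{L_0-\mathfrak c/24}$ to sums over the modified Eisenstein series, tracking carefully the extra sign factors $\sigma_M^{p(\gamma)}$ and, in the {\sf Type II} case, the insertion of $\xi$. This is where the $C_2$-cofiniteness enters: it guarantees that the recursion, together with the finitely many ``initial conditions'', determines $S$ on all of $V$ from its values on a finite-dimensional space, so that the space $\CC_0(g,h;u)$ of all solutions is finite-dimensional.

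The second main step is the convergence of $S_M(u,\tau)$, which I would obtain by the standard argument: the $C_2$-cofiniteness yields a bound on the growth of $\dim M_j$ (via a spanning set for $M$ coming from a finite generating set of $V$ modulo $C_2(V)$), so the formal $q$-series defining the trace converges on $\HH$; holomorphy is then automatic. One subtlety here, absent in the integer-weight case, is that the $q$-expansions involve fractional powers of $q$ coming from the $g$-twisting and from rational conformal weights, so I would work with $q^{1/T}$ for a suitable period $T$ throughout and only at the end observe that the modular transformation interacts correctly with these fractional exponents.

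The heart of the proof — and the main obstacle — is showing that $\CC_0(g,h;u)$ is $SL_2(\Z)$-invariant in the sense of \eqref{whereCBgo}, and then that it is spanned by the $S_M$. For invariance, I would check directly that the defining recursion relations transform covariantly under $S = \slmat$ with ${\sf a}={\sf d}=0$, ${\sf b}=-1$, ${\sf c}=1$ and $T = \slmat$ with ${\sf a}={\sf b}={\sf d}=1$, ${\sf c}=0$ (which generate $SL_2(\Z)$): the transformation laws of the twisted Eisenstein series $P_k(\tau)$ under these generators produce exactly the recursion for the twisted pair $(g^{\sf a}h^{\sf c}, g^{\sf b}h^{\sf d})$, up to the weight-$k$ automorphy factor $({\sf c}\tau+{\sf d})^{-k}$. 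This requires assembling the modular properties of the relevant Jacobi-theta-like series; the twisting shifts the characteristics, which is precisely why the target changes from $(g,h)$ to $(g^{\sf a}h^{\sf c},g^{\sf b}h^{\sf d})$. The final identification $\CC_0(g,h;u) = \CC(g,h;u)$ proceeds by showing the $S_M$ are linearly independent (using that distinct irreducibles are distinguished by the action of $\zhu_g(V)$, hence by their leading $q$-coefficients) and that they span all of $\CC_0$ — this last point uses $g$-rationality, via the bijection $L$ between irreducible $\zhu_g(V)$-modules and irreducible positive-energy $g$-twisted modules, together with the classification of supersymmetric functions on simple superalgebras quoted in the excerpt (this is exactly what forces the {\sf Type I}/{\sf Type II} dichotomy and the appearance of odd traces in \eqref{giveaway}). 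I expect the bookkeeping around the parities $p(\gamma)$, the signs $\sigma_M$, and the $\xi$-insertions in the {\sf Type II} case to be the most delicate part: one must verify that the particular normalization in \eqref{giveaway} is the one for which both the recursion and the modular transformation close up, and that the choice of parity of $\gamma$ tied to the sign in $h(\xi) = \pm\xi$ is consistent throughout.
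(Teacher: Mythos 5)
Your plan follows essentially the same route as the paper: an intrinsically defined space of conformal blocks cut out by recursion relations built from twisted Eisenstein series, finite-dimensionality and a Fuchsian ODE from $C_2$-cofiniteness, modular covariance of the recursion via the transformation laws of those series, and exhaustion by supertrace functions using the Zhu algebra, $g$-rationality, and the {\sf Type I}/{\sf Type II} classification of $h$-supersymmetric functions on simple superalgebras.

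Two caveats. First, your convergence argument via a growth bound on $\dim M_j$ does not by itself control $\tr_{M_j}(u_0\gamma\sigma_M^{p(\gamma)}\cdots)$ for general $u$: the dimension bounds the trace of a fixed operator only up to its unknown size on $M_j$. The paper instead deduces convergence from the fact that the formal $q$-series satisfies the Fuchsian ODE with regular singular point at $q=0$, which your recursion step already supplies, so convergence should be routed through that. Second, and more substantively, your exhaustion step omits the need to exclude $\log q$ terms: a priori a solution of the Fuchsian ODE has a Frobenius expansion $\sum_i (\log q)^i S_i$, and the subtract-the-leading-term-and-iterate argument only identifies the top $\log$-power piece $S_p$ with a combination of supertrace functions. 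The paper rules out $p>0$ by applying the differential-equation axiom to both the leading and subleading coefficients in $\log q$ (Proposition \ref{induc}), obtaining two equations for $S_{p-1,j}$ that are compatible only when $p=0$; this is where semisimplicity of $\zhu_g(V)$ (i.e., $g$-rationality) excludes the pseudo-trace/logarithmic phenomena of the non-rational case. Without this step the claim that the $S_M$ span the full space of conformal blocks — and hence the $SL_2(\Z)$-invariance of $\CC(g,h;u)$ itself — is not established.
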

Now we make some remarks. The condition of $g$-rationality of $V$ in Theorem \ref{mythm} can be replaced by the condition that $\zhu_g(V)$ be finite dimensional and semisimple, which is implies by $g$-rationality (see \cite{DLMzhualgebra} Theorem 8.1). In fact finite dimensionality of $\zhu_g(V)$ is already assured by $C_2$-cofiniteness of $V$ (see \cite{DK} Proposition 2.17(c)).

The condition of $C_2$-cofiniteness imposed in Theorem \ref{mythm} (and in all works cited above), can be replaced by the weaker condition described in Remark \ref{C2remark}. At the end of the introduction we explain an application of Theorem \ref{mythm} made possible by this observation.

The definition of the $SL_2(\Z)$-action involves the term $\gamma_A(\tau) = ({\sf c}\tau + {\sf d})^{-k}$ where $k \notin \Z$. We resolve the ambiguity by defining this term as a principal value, see Section \ref{cbdefn}. Because of this the equation
\[
\gamma_B(\tau)^{-k} \gamma_{A}(B\tau)^{-k} = \gamma_{AB}(\tau)^{-k}
\]
(which is true for $k \in \Z$) holds up to a multiplicative root of unity factor. Therefore the space $\CC(u) = \oplus_{g, h \in G} \CC(g, h; u)$ is only a projective representation of $SL_2(\Z)$ in general. If $\D_u \in \Z$ then it is a true representation.

Let $V$ be a VOSA of the type considered in \cite{DZ}, viz. $\D_u$ lies in $\Z$ (resp. $1/2 + \Z$) for $u$ even (resp. odd), and let $G = \{1, \sigma_V\} \cong \Z / 2\Z$. If we restrict attention to $u = \vac$ then odd trace functions vanish and we may ignore modules of {\sf Type II}. We then recover the result: the vector space spanned by each of the following two sets of functions is $SL_2(\Z)$-invariant of weight $0$.
\begin{itemize}
\item The supercharacters $\str_M q^{L_0 - \mathfrak{c}/24}$ of the $1$-twisted irreducible positive energy $V$-modules.

\item The characters $\tr_M q^{L_0 - \mathfrak{c}/24}$ \underline{and} supercharacters $\str_M q^{L_0 - \mathfrak{c}/24}$ of the $1$-twisted \underline{and} $\sigma_V$-twisted irreducible positive energy $V$-modules.
\end{itemize}
In the physics literature $1$-twisted modules are often referred to as `Ramond twisted' modules, and $\sigma_V$-twisted modules as `Neveu-Schwartz twisted' modules.

Now we indicate the layout of the paper. Like the other generalizations of Theorem \ref{Zhuthm} cited above, our proof follows the pattern of the original paper of Zhu \cite{Zhu}. We have made some simplifications, on the other hand some complications are forced on us by the more general setting.

In Section \ref{definitions} we give basic definitions pertaining to superspaces, superalgebras, VOSAs and their modules. In Section \ref{modforms} we collect some necessary modular form identities. In Section \ref{cbdefn} we define a certain space $\CC(g, h)$ of maps $V \times \HH \rightarrow \CC$ (linear in $V$, holomorphic in $\HH$) called \emph{conformal blocks}, and we determine how the conformal blocks transform under $SL_2(\Z)$.

The $C_2$-cofiniteness condition on $V$ implies that for $S \in \CC(g, h)$ and fixed $u \in V$, the function $S(u, \tau)$ satisfies a Fuchsian differential equation. Moreover there is a \emph{Frobenius expansion} of $S$ in powers of $q$ and $\log q$ whose coefficients are linear maps $V \rightarrow \C$. We sketch the proofs in Section \ref{ODEsection}, referring to \cite{DLMorbifold} for details.

In Section \ref{coeffs} we analyze the leading coefficients in the Frobenius expansion of a conformal block. These coefficients descend to linear maps $\zhu_g(V) \rightarrow \C$. We establish that these maps are $h$-supersymmetric functions on $\zhu_g(V)$ (see Section \ref{symmfunc} for the definition). In Sections \ref{symmfunc} and \ref{tracefunc} we construct a basis of $h$-supersymmetric functions on $\zhu_g(V)$ and extend each one to a supertrace function on $V$, arriving at the definition in equation (\ref{giveaway}) above.

We then prove that the $S_M(u, \tau)$ lie in $\CC(g, h)$. Finally in Section \ref{exhaust} we prove that the $S_M(u, \tau)$ span $\CC(g, h)$. Theorem \ref{mythm} is obtained by combining this result with the modular transformation property of $\CC(g, h)$ proved in Section \ref{cbdefn}.

The rest of the paper is devoted to examples.

In Section \ref{ex1} we consider the neutral free fermion VOSA $V = F(\varphi)$ and we take $G = \{1, \sigma_V\}$. We explicitly compute conformal blocks $\CC(g, h; u)$ for $u = \vac$ and $u = \varphi$ the weight $1/2$ vector. In weight $1/2$ the {\sf Type II} supertrace function makes an appearance; the corresponding space of conformal blocks is one dimensional and is spanned by Dedekind's weight $1/2$ modular form $\eta(\tau)$.

In Section \ref{ex2} we study the charged free fermions VOSA $V = F_{\text{ch}}^a(\psi, \psi^*)$ which is equipped with a Virasoro field $L^a(z)$ depending on the real parameter $a \in (0, 1)$. This VOSA admits a group $K \cong S^1$ of automorphisms. For each $g, h \in K$ we write down the supertrace of $h$ on the unique irreducible $g$-twisted positive energy $V$-module, and we compute transformations of these functions under $SL_2(\Z)$. The results confirm Theorem \ref{mythm} which applies when $a \in \Q$ and $g, h$ have finite order.

In Section \ref{ex3} we study the VOSA $V_Q$ associated to a positive definite integral (not necessarily even) lattice $Q$. For $G = \{1, \sigma_V\}$ we describe the $1$- and $\sigma_V$-twisted modules and the spaces $\CC(g, h; \vac)$.

Finally we discuss here the example of the affine Kac-Moody VOA $V = V_k(\mathfrak{sl}_2)$ where $k > -2$ is a rational number (see \cite{Kac}, \cite{DLMadmiss}). This VOA admits a family of `perturbed' VOA structures depending on a parameter $z \in (0, 1) \cap \Q$. This family was studied in detail in \cite{DLMadmiss} (see also \cite{AM}) and it was shown there that $V(z)$ possesses rational (but not integer) conformal weights, that $V(z)$ is $g_0$-rational for a certain finite order automorphism $g_0$ (which depends on $z$), and that $V(z)$ satisfies the weakened $C_2$-cofiniteness condition of Remark \ref{C2remark} (in the case $g = g_0$ and arbitrary $h \in G = \left<g_0\right>$). It follows then from Theorem \ref{mythm} that the trace functions $\tr_M u_0 q^{L_0-\mathfrak{c}/24}$, as $M$ runs through the finitely many irreducible positive energy $V(z)$-modules, span $\CC(g_0, 1; u)$. This space is invariant under the congruence subgroup $\G_0(N) \subseteq SL_2(\Z)$ of matrices $\slmat$ satisfying $\text{\sf a} \equiv \text{\sf d} \equiv 0 \pmod N$ and $\text{\sf b} \equiv \text{\sf c} \equiv 0 \pmod N$, where $N$ is the order of $G$. More refined results are possible, which we would like to deal with in future work.


I would like to thank my Ph.D. advisor Victor Kac for many useful discussions and for reading the manuscript. I would also like to thank the referees for detailed comments which improved the exposition, particularly in Section \ref{symmfunc}.


\section{Basic definitions} \label{definitions}

We use the notation $\Z_+ = \{0, 1, 2, \ldots\}$. All vector spaces and superspaces are over $\C$.

A vector \emph{superspace} $U$ is a vector space graded by $\Z/2\Z = \{\ov{0}, \ov{1}\}$. We call $U_{\ov{0}}$ and $U_{\ov{1}}$ the even and odd components of $U$ respectively. A linear map between vector superspaces is always $\Z/2\Z$-graded, and a subspace is always $\Z/2\Z$-graded. We use the following notations: $p(u) = \alpha$ for homogeneous $u \in U_\al$, and $p(u, v) = (-1)^{p(u)p(v)}$. Every vector superspace $U$ carries a natural involution $\sigma_U$ defined by $\sigma_U(u) = (-1)^{p(u)}$. We write $\C^{m|k}$ for the vector superspace with a basis consisting of $m$ even vectors and $k$ odd vectors.

An \emph{associative superalgebra} is a vector superspace with an associative algebra structure compatible with the $\Z/2\Z$-grading. A homomorphism of superalgebras is a homomorphism of algebras that preserves the $\Z/2\Z$-grading. Isomorphism and automorphism are now defined as usual. We deal with unital superalgebras in this paper. The unit element must be even and a homomorphism of unital superalgebras must therefore be even. A basic example of an associative superalgebra is $\en U$, with $(\en U)_\al = \{X \in \en U | XU_\beta \subseteq U_{\al+\beta}\}$, for $U$ a vector superspace. A \emph{module} over an associative superalgebra $A$ is a vector superspace $M$ together with a homomorphism $A \rightarrow \en M$. Two $A$-modules are \emph{equivalent} if there is a $\Z/2\Z$-graded linear isomorphism between them which intertwines with the $A$-action, the isomorphism may be even or odd.

An associative superalgebra is said to be \emph{simple} if it has no proper $\Z/2\Z$-graded ideals apart from $\{0\}$, and is said to be \emph{semisimple} if it is isomorphic to a direct sum of simple superalgebras.

The commutator of operators $X$ and $Y$ on a vector superspace is defined to be $[X, Y] = XY - p(X, Y) YX$. The \emph{supertrace} of an operator $X \in \en U$ is $\str_U X = \tr_{U_{\ov{0}}} X - \tr_{U_{\ov{1}}} X$. In general $\str_U [X, Y] = 0$.

We write $U[z]$ for the ring of polynomials in $z$ with coefficients in $U$, $U[[z]]$ for the ring of formal power series, and $U((z))$ for the ring of Laurent series, i.e., expressions $\sum_{n \in \Z} a_n z^n$ in which finitely many negative powers of $z$ occur. The space of formal distributions $U[[z^{\pm 1}]]$ is the set of expressions $\sum_{n \in \Z} a_n z^n$ with no restriction on the coefficients $a_n$. Finally $U\{\{z\}\} = \oplus_{r \in \R} z^r U[[z^{\pm 1}]]$. Extension to several variables is straightforward, but note that $U((z))((w)) \neq U((w))((z))$.

We write $\partial_z f(z)$ for the $z$-derivative of $f(z)$, also $z^{(n)}$ for $z^n / n!$, and $[z^n] : f(z)$ for $f_n$ the $z^n$ coefficient of $f(z)$.

A convenient index convention for formal distributions is $f(z) = \sum_{n \in \Z} f_{(n)} z^{-n-1}$.  The \emph{formal residue} operation $\res_z(\cdot)dz : U[[z^{\pm 1}]] \rightarrow U$ is defined by $\res_z f(z) dz = [z^{-1}] : f(z) = f_{(0)}$. We have
\[
\res_z \partial_z f(z) dz = 0 \quad \text{and} \quad \res_z f(z) \partial_z g(z) dz = -\res_z g(z) \partial_z f(z) dz.
\]
Let $f(z) \in U((z))$ and $g(w) \in w\C^\times + w^2 \C[[w]]$. The substitution of $z = g(w)$ into $f(z)$ gives a well-defined element $f(g(w)) \in U((w))$, and we have the formal change of variable formula
\[
\res_z f(z) dz = \res_w f(g(w)) \partial_w g(w) dw.
\]

The \emph{formal delta function} $\delta(z, w) \in \C[[z^{\pm 1}, w^{\pm 1}]]$ is defined by
\[
\delta(z, w) = \sum_{n \in \Z} z^n w^{-n-1}.
\]
The operators
\begin{align*}
i_{z, w} &: U[z^{\pm 1}, w^{\pm 1}, (z-w)^{\pm 1}] \rightarrow U((z))((w)) \\
\text{and} \quad i_{w, z} &: U[z^{\pm 1}, w^{\pm 1}, (z-w)^{\pm 1}] \rightarrow U((w))((z))
\end{align*}
denote expansion of an element as Laurent series in the domains $|z| > |w|$ and $|w| > |z|$, respectively. For example,
\[
i_{z, w} (z-w)^{-1} = \sum_{j \in \Z_+} z^{-j-1} w^j \quad \text{and} \quad i_{w, z} (z-w)^{-1} = -\sum_{j \in \Z_+} z^j w^{-j-1}.
\]
An $\en U$-valued formal distribution $f(z)$ is called a \emph{quantum field} if $f(z)u \in U((z))$ for each $u \in U$.

For definitions regarding vertex operator superalgebras we follow the book of Kac \cite{Kac}.
\vspace{.3cm}
\begin{defn} \label{VOSAdefn}
A \emph{vertex operator superalgebra (VOSA)} is a quadruple $(V, \vac, \om, Y)$ where $V$ is a vector superspace, $\vac$ and $\om$ are even elements of $V$ called the \emph{vacuum vector} and the \emph{Virasoro vector} respectively, and $Y : V \rightarrow (\en V)[[z^{\pm 1}]]$ is an injective linear map such that $Y(u, z)$ is a quantum field for each $u \in V$. The map $Y$ is called the \emph{state-field correspondence}, and is written
\[
Y(u, z) = \sum_{n \in \Z} u_{(n)} z^{-n-1}.
\]
The operators $u_{(n)}$ are called the \emph{Fourier modes} of $u$, and the operation $\cdot_{(n)}\cdot : V \otimes V \rightarrow V$ is called the \emph{$n^\text{th}$ product}. The following axioms are to be satisfied.
\begin{itemize}
\item $Y(\vac, z) = I_V$.

\item For all $u, v \in V$, $n \in \Z$,
\begin{align*}
& \sum_{j \in \Z_+} Y(u_{(n+j)}v, w) \partial_w^{(j)} \delta(z, w) \\
& \phantom{\lim} = Y(u, z) Y(v, w) i_{z, w} (z-w)^n - p(u, v) Y(v, w) Y(u, z) i_{w, z} (z-w)^n.
\end{align*}

\item If $Y(\om, z) = L(z) = \sum_{n \in \Z} L_n z^{-n-2}$ then the operators $L_n$ satisfy the commutation relations of the Virasoro algebra:
\[
[L_m, L_n] = (m-n) L_{m+n} + \delta_{m, -n}\frac{m^3-m}{12} \mathfrak{c},
\]
where $\mathfrak{c} \in \R$ is called the \emph{central charge} of $V$.

\item $L_0$ is diagonalizable on $V$ with rational eigenvalues, and the eigenspaces are finite dimensional. The $L_0$-eigenvalue of an eigenvector $u \in V$ is called the \emph{conformal weight} $\D_u$ of $u$. Also $\D_{\vac} = 0$ and $\D_\om = 2$.

\item The set of conformal weights of $V$ is bounded below.

\item $Y(L_{-1}u, z) = \partial_z Y(u, z)$ for all $u \in V$.
\end{itemize}
\end{defn}

Let $V_k = \{u \in V | \D_u = k\}$. A convenient indexing of the modes, called the conformal weight indexing, is defined by $u_n = u_{(n+\D_u-1)}$ (for $u$ of homogeneous conformal weight, then extended to all $u \in V$ linearly). Hence
\[
Y(u, z) = \sum_{n \in -[\D_u]} u_n z^{-n-\D_u},
\]
where here and below $[\al]$ denotes the coset $\al + \Z$ of $\al \in \R$ modulo $\Z$.

The second axiom of Definition \ref{VOSAdefn} is called the \emph{Borcherds identity}. Expressed in terms of modes it becomes
\begin{align} \label{borcherds}
\begin{split}
& \sum_{j \in \Z_+} \binom{m+\D_u-1}{j} (u_{(n+j)}v)_{m+k}x \\
& \phantom{\lim}= \sum_{j \in \Z_+} (-1)^j \binom{n}{j} \left[u_{m+n-j} v_{k+j-n} - p(u, v) (-1)^n v_{k-j}u_{m+j}\right] x
\end{split}
\end{align}
for all $u, v, x \in V$, $n \in \Z$, $m \in -[\D_u]$, and $k \in -[\D_v]$.

A useful special case of the Borcherds identity is the \emph{commutator formula}
\begin{align} \label{commutator}
[u_m, v_k] = \sum_{j \in \Z_+} \binom{m+\D_u-1}{j} (u_{(j)}v)_{m+k},
\end{align}
obtained by setting $n = 0$ in (\ref{borcherds}). The commutator formula together with the final VOSA axiom implies that $[L_0, u_k] = -ku_k$ for all $u \in V$.

A homomorphism $\phi :V_1 \rightarrow V_2$ of VOSAs is an even linear map such that $\phi(\vac_1) = \vac_2$, $\phi(\om_1) = \om_2$, and $\phi(u_{(n)}v) = \phi(u)_{(n)}\phi(v)$ for all $u, v \in V_1$. Homomorphisms preserve conformal weight. Isomorphism and automorphism are defined in the obvious way.

\vspace{.3cm}
\begin{defn} \label{twisteddefn}
Let $V$ be a VOSA and $g$ an automorphism of $V$. Let $\mu(u)$ denote the $g$-eigenvalue of an eigenvector $u \in V$. Pull $\mu(u)$ back to a coset $[\eps_u]$ in $\R$ modulo $\Z$ via the map $e^{2\pi i x} : \R \rightarrow S^1$ (also define $\eps_u$ to be the largest non positive element of $[\eps_u]$). A \emph{$g$-twisted $V$-module} is a vector superspace $M$ together with a state-field correspondence $Y^M : V \rightarrow (\en M)\{\{z\}\}$,
\[
Y^M(u, z) = \sum_{n \in [\eps_u]} u^M_{n} z^{-n-\D_u} = \sum_{n \in [\eps_u] + [\D_u]} u^M_{(n)} z^{-n-1},
\]
satisfying the quantum field property. The following axioms are to be satisfied.
\begin{itemize}
\item $Y^M(\vac, z) = I_M$.

\item For all $u, v \in V$, $x \in M$, $n \in \Z$, $m \in [\eps_u]$, and $k \in [\eps_v]$,
\begin{align*}
& \sum_{j \in \Z_+} \binom{m+\D_u-1}{j} (u_{(n+j)}v)^M_{m+k}x \\
& \phantom{\lim} = \sum_{j \in \Z_+} (-1)^j \binom{n}{j} \left[u^M_{m+n-j} v^M_{k+j-n} - p(u, v) (-1)^n v^M_{k-j}u^M_{m+j}\right] x.
\end{align*}
\end{itemize}
A \emph{positive energy $g$-twisted $V$-module} is a $g$-twisted $V$-module $M$ such that
\begin{itemize}
\item $M = \oplus_{j \in \R} M_j$ is $\R$-graded, each graded piece is finite dimensional, and $M_k = 0$ for sufficiently negative $k$,

\item $u^M_n M_j \subseteq M_{j-n}$ for all $u \in V$, $n \in [\eps_u]$, $j \in \R_+$.
\end{itemize}
\end{defn}

There is a `dangerous bend' in Definition \ref{twisteddefn}. If $u \in V$ has conformal weight $\D_u \notin \Z$ then the modes $u_n$ acting on $V$ are indexed by $n \in -[\D_u]$, so $V$ with its adjoint action is an $e^{-2\pi i L_0}$-twisted $V$-module, \emph{not} a $1$-twisted $V$-module as one might expect. This issue is purely notational, and we could change notation so as to have $V$ be a $1$-twisted $V$-module. We use Definition \ref{twisteddefn} because it is most natural in relation to the modular transformations of conformal blocks, equation (\ref{whereCBgo}). Our definition coincides with the usual one when all conformal weights of $V$ are integers.

\vspace{.3cm}
\begin{defn} \label{Zhustructure}
The \emph{Zhu VOSA structure} is $(V, \vac, \tilde{\om}, Y[u, z])$ where
\[
\tilde{\om} = (2\pi i)^2 (\om - \tfrac{\mathfrak{c}}{24} \vac)
\]
and
\[
Y[u, z] = e^{2\pi i \D_u z} Y(u, e^{2\pi i z} - 1).
\]
\end{defn}
If we write $L[z] = Y[\tilde{\om}, z] = \sum_{n \in \Z} L_{[n]} z^{-n-2}$ then
\begin{align*}
L_{[-2]} = (2\pi i)^2 (L_{-2} - \mathfrak{c} / 24), \quad L_{[-1]} = 2\pi i (L_{-1} + L_0), \quad \text{and} \quad L_{[0]} = L_0 - \sum_{j \in \Z_{>0}} \frac{(-1)^j}{j(j+1)} L_j.
\end{align*}
The eigenvalue $\DD_u$ of an eigenvector $u$ with respect to $L_{[0]}$ is called the \emph{Zhu weight} of $u$. We write $V_{[k]} = \{u \in V | \DD_u = k\}$ and
\[
Y[u, z] = \sum_{n \in \Z} u_{([n])} z^{-n-1} = \sum_{n \in -[\DD_u]} u_{[n]} z^{-n-\DD_u}.
\]
Explicitly we have
\begin{align*}
u_{([n])}v
&= \res_z z^n e^{2\pi i \D_u z} Y(u, e^{2\pi i z} - 1)v dz \\
&= (2\pi i)^{-n-1} \res_w [\ln(1+w)]^n (1+w)^{\D_u-1} Y(u, w)v dw,
\end{align*}
where $w = e^{2\pi i z}-1$. An automorphism of the new VOSA structure is the same as an automorphism of the old one. Vectors of homogeneous conformal weight are not generally of homogeneous Zhu weight and vice versa.

We use the following notation below: $V$ is a VOSA, $G$ a finite group of automorphisms of $V$, and $g, h \in G$ two commuting automorphisms. Unless otherwise stated an element of $V$ is a simultaneous eigenvector of $g$ and $h$. For such an eigenvector $u$ we write $\mu(u)$ and $\la(u)$ for its $g$- and $h$-eigenvalues respectively.

We define a right action of $SL_2(\Z)$ on $G \times G$ by $(g, h) \cdot A = (g^{\sf a} h^{\sf c}, g^{\sf b} h^{\sf d})$ where $A = \slmat$. Similarly $(\mu, \la) \cdot A = (\mu^{\sf a} \la^{\sf c}, \mu^{\sf b}\la^{\sf d})$. We use the standard notation $A\tau$ for $\frac{{\sf a}\tau + {\sf b}}{{\sf c}\tau + {\sf d}}$.


\section{Modular Forms} \label{modforms}

In this section we recall some functions that appear in connection with modular forms and elliptic curves. Consider the ill-defined expression
\[
2\pi i {\sum_{n \in [\eps]}}' \frac{e^{2\pi i n z}}{1 - \la q^n},
\]
where $\la$ is a root of unity and $[\eps]$ is a coset of $\Q$ modulo $\Z$ (also fix $\eps \in [\eps]$ such that $-1 < \eps \leq 0$, and let $\mu = e^{2\pi i \eps}$). By ${\sum}'$ we mean the summation over all nonsingular terms, i.e., if $[\eps] = \Z$ and $\la = 1$ then $n = 0$ is to be excluded from the sum.

To make sense of the sum we first rewrite it as
\begin{align*}
\frac{2\pi i \delta}{1-\la} + 2\pi i \sum_{n \in [\eps]_{> 0}} \frac{e^{2\pi i n z}}{1 - \la q^n} - 2\pi i \sum_{n \in [\eps]_{< 0}} \frac{\la^{-1} e^{2\pi i n z} q^{-n}}{1 - \la^{-1} q^{-n}},
\end{align*}
where
\[
\delta = \left\{\begin{array}{ll} 1 & \text{if $[\eps] = \Z$ and $\la \neq 1$}, \\ 0 & \text{otherwise}. \\ \end{array} \right.
\]
Then we expand in non-negative powers of $q$ to get
\begin{align*}
\frac{2\pi i \delta}{1-\la}
+ 2\pi i \sum_{n \in [\eps]_{>0}} e^{2\pi i n z}
+ 2\pi i \sum_{m \in \Z_{>0}} \left[ \sum_{n \in [\eps]_{> 0}} e^{2\pi i n z} (\la q^n)^m
- \sum_{n \in [\eps]_{< 0}} e^{2\pi i n z} (\la^{-1} q^{-n})^m \right].
\end{align*}
This is still not well-defined, because of the second term. Let us re-sum the second term using the geometric series formula. We arrive at the following formula, which we regard as a definition.
\begin{align} \label{newPdef}
\begin{split}
P^{\mu, \la}(z, q)
= {} & \frac{2\pi i \delta}{1-\la}
- 2\pi i \frac{e^{2\pi i (1+\eps) z}}{e^{2\pi i z} - 1} \\
& + 2\pi i \sum_{m \in \Z_{>0}} \left[ \sum_{n \in [\eps]_{> 0}} e^{2\pi i n z} (\la q^n)^m
- \sum_{n \in [\eps]_{< 0}} e^{2\pi i n z} (\la^{-1} q^{-n})^m \right].
\end{split}
\end{align}

Let us write
\begin{align*}
P^{\mu, \la}(z, q) = -z^{-1} + \sum_{k=0}^\infty P^{\mu, \la}_k(q) z^k.
\end{align*}
The \emph{Bernoulli polynomials} $B_n(\gamma)$ are defined by
\begin{align*}
\frac{e^{\gamma z}}{e^z-1} = \sum_{n=0}^\infty \frac{z^{n-1}}{n!} B_n(\gamma).
\end{align*}
For example $B_0(\gamma) = 1$, $B_1(\gamma) = \gamma - 1/2$, $B_2(\gamma) = \gamma^2 - \gamma + 1/6$, etc. The \emph{Bernoulli numbers} are $B_n = B_n(1)$. Using the definition of the Bernoulli polynomials and the series expansion of $e^{2\pi i n z}$ we directly obtain the following.
\vspace{.3cm}
\begin{lemma} \label{expmod}
For $k \in \Z_+$ we have
\begin{align*}
P_k^{\mu, \la}(q) = {} & \delta_{k, 0} \frac{2\pi i \delta}{1-\la} - \frac{(2\pi i)^{k+1}}{(k+1)!} B_{k+1}(1+\eps) \\
& + \frac{(2\pi i)^{k+1}}{k!} \sum_{m \in \Z_{>0}} \left[ \sum_{n \in [\eps]_{>0}} n^k (\la q^n)^m - \sum_{n \in [\eps]_{<0}} n^k (\la^{-1} q^{-n})^m \right].
\end{align*}
\end{lemma}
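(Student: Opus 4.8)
The plan is to start from the definition \eqref{newPdef} of $P^{\mu, \la}(z, q)$, isolate each of its three pieces, and match its Laurent coefficient in $z^k$ against the claimed formula term by term. The $z$-independent constant $\frac{2\pi i \delta}{1-\la}$ contributes only to $k = 0$, which accounts for the $\delta_{k, 0} \frac{2\pi i \delta}{1-\la}$ summand. For the middle term I would rewrite
\[
- 2\pi i \frac{e^{2\pi i(1+\eps)z}}{e^{2\pi i z}-1}
\]
by substituting $w = 2\pi i z$ into the generating function $\frac{e^{\gamma w}}{e^w - 1} = \sum_{n \geq 0} \frac{w^{n-1}}{n!} B_n(\gamma)$ with $\gamma = 1 + \eps$; this yields $-\sum_{n \geq 0} \frac{(2\pi i)^{n-1} z^{n-1}}{n!} B_n(1+\eps) \cdot 2\pi i = -\sum_{n \geq 0} \frac{(2\pi i)^{n} z^{n-1}}{n!} B_n(1+\eps)$. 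The $n = 0$ term is $-z^{-1}$ (using $B_0 = 1$), consistent with the normalization $P^{\mu, \la}(z,q) = -z^{-1} + \sum_k P_k^{\mu,\la}(q) z^k$, and the $z^k$ coefficient for $k \geq 0$ comes from $n = k+1$, giving exactly $-\frac{(2\pi i)^{k+1}}{(k+1)!} B_{k+1}(1+\eps)$.

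For the final double sum, I would expand each exponential $e^{2\pi i n z} = \sum_{k \geq 0} \frac{(2\pi i n)^k}{k!} z^k$ and read off the $z^k$ coefficient, which picks up a factor $\frac{(2\pi i)^k n^k}{k!}$ from each summand; multiplying by the overall $2\pi i$ gives $\frac{(2\pi i)^{k+1}}{k!} n^k$, matching the claimed expression
\[
\frac{(2\pi i)^{k+1}}{k!} \sum_{m \in \Z_{>0}} \left[ \sum_{n \in [\eps]_{>0}} n^k (\la q^n)^m - \sum_{n \in [\eps]_{<0}} n^k (\la^{-1} q^{-n})^m \right].
\]
Summing the three contributions gives the stated formula for $P_k^{\mu, \la}(q)$.

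The one point requiring a little care — and the main (modest) obstacle — is justifying that one may interchange the order of summation and extract the $z^k$ coefficient termwise in the double sum: for fixed power $q^m$, the inner sums over $n \in [\eps]_{>0}$ and $n \in [\eps]_{<0}$ are each infinite, and $n^k$ grows polynomially, so convergence is only in the sense of formal power series in $q$ (each fixed power of $q$ still receives infinitely many contributions unless one further expands in $q$, or one works in the region where the series converges absolutely in $q$ for $z$ in a suitable range). Since the lemma is purely a statement identifying formal-$q$-series coefficients of the already-defined object $P^{\mu,\la}$, it suffices to observe that \eqref{newPdef} is term-by-term a well-defined element of $\C((q))[[z]]$ after the geometric re-summation (the second term being a genuine power series in $z$ with a simple pole, the third a convergent $q$-expansion), so extracting $[z^k]$ is legitimate. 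I would remark that this is the "directly obtain" computation promised in the text and leave the bookkeeping implicit.
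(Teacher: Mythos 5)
Your computation is correct and is precisely the argument the paper intends: the paper gives no explicit proof, stating only that the lemma follows "directly" from the Bernoulli generating function and the series expansion of $e^{2\pi i n z}$, which is exactly the term-by-term coefficient extraction you carry out. The $n=0$ check recovering $-z^{-1}$ and the identification of the $z^k$ coefficient with the $n=k+1$ Bernoulli term are both right, so nothing further is needed.
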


We now record the modular transformation properties of the functions $P_k^{\mu, \la}$. For $(\mu, \la) \neq (1, 1)$, our functions are essentially the same as the $Q$-functions of \cite{DLMorbifold}. Indeed for all $k \in \Z_+$,
\begin{align*}
P_k^{\mu, \la}(q) &= (2\pi i)^{k+1} Q_{k+1}(\mu, \la, q) \quad \text{when $(\mu, \la) \neq (1, 1)$}.
\end{align*}
Section 4 of \cite{DLMorbifold}, in particular Theorem 4.6, tells us that $P_k^{\mu, \la}$, when summed in order of increasing powers of $q$, converges to a holomorphic function of $\tau \in \HH$. Furthermore
\begin{align} \label{Pmodular}
P_k^{\mu, \la}(A \tau) = ({\sf c}\tau+{\sf d})^{k+1} P_k^{(\mu, \la) \cdot A}(\tau).
\end{align}
Since $\mu$ and $\la$ are roots of unity, there exists $N \in \Z_+$ such that $\mu^N = \la^N = 1$. Therefore $P_k^{\mu, \la}(A \tau) = ({\sf c}\tau+{\sf d})^{k+1} P_k^{\mu, \la}(\tau)$ whenever $A = \slmat$ satisfies ${\sf a} \equiv {\sf d} \equiv 1 \pmod N$ and ${\sf b} \equiv {\sf c} \equiv 0 \pmod N$, i.e., if $A \in \Gamma_0(N)$. Hence $P_k^{\mu, \la}(\tau)$ is a holomorphic modular form on $\G_0(N)$ of weight $k+1$.

Now we consider the case $(\mu, \la) = (1, 1)$. Comparing the formula of Lemma \ref{expmod} with the Eisenstein series
\begin{align*}
G_k(\tau)
= 2\zeta(k) + \frac{2(2\pi i)^k}{(k-1)!} \sum_{n=1}^\infty \sigma_{k-1}(n) q^n \quad \text{(for $k \geq 2$)}
\end{align*}
shows that $P^{1, 1}_k(q) = G_{k+1}(\tau)$ for $k \geq 1$. We also have $P_0^{1, 1}(q) = -\pi i$. Therefore equation (\ref{Pmodular}) holds when $(\mu, \la) = (1, 1)$ and $k \geq 2$. It is well-known that $G_2(q)$ is not a modular form, but instead satisfies
\begin{align*}
G_2(A \tau) = ({\sf c}\tau+{\sf d})^2 G_2(\tau) - 2\pi i {\sf c} ({\sf c}\tau+{\sf d}).
\end{align*}

The function $P^{1, 1}(z, q)$ (which we abbreviate to $P(z, q)$ below) is closely related to the classical Weierstrass zeta function
\begin{align*}
\zeta(z, \tau)
&= z^{-1} + \sum_{(m, n) \neq (0, 0)} \left[ \frac{1}{z-m\tau-n} + \frac{1}{m\tau+n} + \frac{z}{(m\tau+n)^2} \right] \\
&= z^{-1} - \sum_{k=4}^\infty z^{k-1} G_k(\tau),
\end{align*}
(the latter is the Laurent expansion about $z=0$). We have
\[
\zeta(z, \tau) = -P(z, q) + zG_2(q) - \pi i.
\]
The Weierstrass elliptic function is $\wp(z, \tau) = -\frac{\partial}{\partial z} \zeta(z, \tau)$, so we have
\begin{align} \label{delpwp}
\frac{\partial}{\partial z} P(z, q) = \wp(z, q) + G_2(q).
\end{align}

The Dedekind eta function is defined, for $\tau \in \HH$, to be
\begin{align} \label{etadefn}
\eta(\tau) = q^{1 / 24} \prod_{n=1}^\infty (1-q^n).
\end{align}
From \cite{Lang} p. 253 we have the following.
\vspace{.3cm}
\begin{prop} \label{etatrans}
\[
\eta(\tau+1) = e^{\pi i / 12} \eta(\tau) \quad \text{and} \quad \eta(\tfrac{-1}{\tau}) = (-i \tau)^{1/2} \eta(\tau).
\]
\end{prop}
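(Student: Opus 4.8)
The plan is to prove the two identities separately: the translation law by direct inspection of the product formula \eqref{etadefn}, and the inversion law by differentiating $\log\eta$ and feeding in the quasi-modularity of $G_2$ recorded just above this proposition. (This is the route that meshes with the machinery already developed in this section; the standard alternative, via Poisson summation for the Jacobi theta function, would also work but duplicates less of what we have at hand.)

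For the translation law, replacing $\tau$ by $\tau+1$ leaves $q=e^{2\pi i\tau}$, and hence the product $\prod_{n\ge1}(1-q^n)$, unchanged, while the prefactor $q^{1/24}=e^{2\pi i\tau/24}$ acquires a factor $e^{2\pi i/24}=e^{\pi i/12}$. This gives $\eta(\tau+1)=e^{\pi i/12}\eta(\tau)$ at once.

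For the inversion law I would first compute the logarithmic derivative. Differentiating $\log\eta(\tau)=\tfrac{2\pi i\tau}{24}+\sum_{n\ge1}\log(1-q^n)$ termwise and expanding $\tfrac{nq^n}{1-q^n}=\sum_{m\ge1}nq^{nm}$ yields
\[
\frac{d}{d\tau}\log\eta(\tau)=\frac{\pi i}{12}\Bigl(1-24\sum_{n\ge1}\sigma_1(n)q^n\Bigr).
\]
Comparing with the Eisenstein series $G_2(\tau)=2\zeta(2)-8\pi^2\sum_{n\ge1}\sigma_1(n)q^n=\tfrac{\pi^2}{3}\bigl(1-24\sum_{n\ge1}\sigma_1(n)q^n\bigr)$ identifies the right-hand side as $\tfrac{i}{4\pi}G_2(\tau)$. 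Now set $f(\tau)=\eta(-1/\tau)$ and $g(\tau)=(-i\tau)^{1/2}\eta(\tau)$, where $(-i\tau)^{1/2}$ is the principal branch; both $f$ and $g$ are holomorphic and nonvanishing on $\HH$ (since $-i\tau$ lies in the open right half-plane for $\tau\in\HH$, since $-1/\tau\in\HH$ there, and since $\eta$ has no zeros on $\HH$ by the product formula). Applying the chain rule to $f$ and using $G_2(-1/\tau)=\tau^2G_2(\tau)-2\pi i\tau$ — the case $A=\left(\begin{smallmatrix}0&-1\\1&0\end{smallmatrix}\right)$ of the transformation law above — a short computation gives
\[
\frac{d}{d\tau}\log f(\tau)=\frac{1}{\tau^2}\cdot\frac{i}{4\pi}G_2(-1/\tau)=\frac{i}{4\pi}G_2(\tau)+\frac{1}{2\tau}=\frac{d}{d\tau}\log g(\tau).
\]
Hence $f/g$ is constant on the connected set $\HH$, say $\eta(-1/\tau)=c\,(-i\tau)^{1/2}\eta(\tau)$.

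It remains to pin down $c$, which I would do by evaluating at the fixed point $\tau=i$ of $\tau\mapsto-1/\tau$: there $(-i\cdot i)^{1/2}=1^{1/2}=1$ and $\eta(i)\ne0$, forcing $c=1$. I expect no real obstacle here — this is essentially the classical computation — with the only points needing care being the consistent choice of branch for $(-i\tau)^{1/2}$ (so that the logarithmic-derivative argument and the evaluation at $\tau=i$ use the same branch) and the bookkeeping in the $G_2$ transformation; the latter is, appropriately, precisely what produces the half-integral automorphy factor $(-i\tau)^{1/2}$.
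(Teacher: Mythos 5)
Your proof is correct. Note that the paper does not actually prove Proposition \ref{etatrans}: it is quoted from Lang, \emph{Elliptic Functions}, p.~253, so there is no in-paper argument to compare against. Your argument is the classical logarithmic-derivative proof and all the computations check out: $\frac{d}{d\tau}\log\eta=\frac{i}{4\pi}G_2$, the chain rule plus $G_2(-1/\tau)=\tau^2G_2(\tau)-2\pi i\tau$ gives $\frac{d}{d\tau}\log\eta(-1/\tau)=\frac{i}{4\pi}G_2(\tau)+\frac1{2\tau}=\frac{d}{d\tau}\log\bigl[(-i\tau)^{1/2}\eta(\tau)\bigr]$, and evaluation at the fixed point $\tau=i$ pins the constant to $1$. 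The branch issues are handled correctly ($-i\tau$ lies in the right half-plane for $\tau\in\HH$, and $f/g$ is a nonvanishing holomorphic function on the simply connected domain $\HH$ with vanishing logarithmic derivative, hence constant). The one thing to be aware of is that your proof is only as self-contained as the quasi-modularity of $G_2$, which the paper likewise records without proof; this is not circular, since $G_2(-1/\tau)=\tau^2G_2(\tau)-2\pi i\tau$ has standard independent derivations (Eisenstein/Hecke summation), but it means your argument trades one classical unproved input for another. That is a perfectly reasonable trade, and arguably the more informative one, since it explains where the half-integral automorphy factor comes from.
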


The Jacobi theta function is defined, for $\tau \in \HH$ and $z \in \C$, to be
\begin{align} \label{Jacdefn}
\theta(z; \tau) = \sum_{n \in \Z} e^{\pi i n^2 \tau + 2\pi i n z}.
\end{align}
From \cite{WW}, p. 475 we have the following.
\vspace{.3cm}
\begin{prop} \label{Jactrans}
\begin{align*}
\theta(\tfrac{z}{\tau}; \tfrac{-1}{\tau}) = (-i \tau)^{1/2} e^{\pi i z^2 / \tau} \theta(z; \tau) \quad \text{and} \quad \theta(z; \tau+1) = \theta(z + \tfrac{1}{2}; \tau).
\end{align*}
\end{prop}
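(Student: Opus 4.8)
The second identity is elementary and I would dispose of it first. Writing out the definition, $\theta(z;\tau+1) = \sum_{n\in\Z} e^{\pi i n^2(\tau+1) + 2\pi i n z}$, and since $n^2 \equiv n \pmod 2$ we have $e^{\pi i n^2} = e^{\pi i n}$, so this factor may be absorbed into the linear exponential to give $\sum_{n\in\Z} e^{\pi i n^2 \tau + 2\pi i n(z+\frac12)} = \theta(z+\tfrac12;\tau)$. At the outset I would also record the convergence fact underlying everything: since $|e^{\pi i n^2\tau}| = e^{-\pi n^2 \mathrm{Im}\,\tau}$ decays like a Gaussian in $n$, the defining series converges absolutely and uniformly on compact subsets of $\HH\times\C$, so $\theta$ is jointly holomorphic there.

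For the first and deeper identity my plan is to prove it first on the positive imaginary axis by Poisson summation, then extend to all of $\HH$ by analytic continuation. Fix $\tau = it$ with $t>0$ and set $f(x) = e^{\pi i x^2\tau + 2\pi i x z} = e^{-\pi t x^2 + 2\pi i x z}$, a Schwartz function of $x\in\R$. Then $\theta(z;it) = \sum_{n\in\Z} f(n)$, and Poisson summation gives $\sum_{n} f(n) = \sum_n \hat f(n)$, where $\hat f(\xi) = \int_\R f(x)e^{-2\pi i x\xi}\,dx$. The key computation is the Gaussian Fourier transform: completing the square and using the contour-shifted integral $\int_\R e^{-\pi t x^2}\,dx = t^{-1/2}$ yields $\hat f(\xi) = t^{-1/2}\, e^{-\pi(z-\xi)^2/t}$.

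Substituting, I would then recognize the right-hand side as a theta series in the transformed variables. Expanding $(z-n)^2 = z^2 - 2zn + n^2$ and setting $\tau' = -1/\tau = i/t$, one checks that $-\pi n^2/t = \pi i n^2\tau'$ and $2\pi z n/t = 2\pi i n(z/\tau)$, whence $\sum_n \hat f(n) = t^{-1/2} e^{-\pi z^2/t}\,\theta(z/\tau; -1/\tau)$. Equating with $\theta(z;it)$ and solving gives $\theta(z/\tau;-1/\tau) = t^{1/2} e^{\pi z^2/t}\,\theta(z;it)$. Finally I would observe that for $\tau = it$ one has $(-i\tau)^{1/2} = t^{1/2}$ in the principal branch and $e^{\pi i z^2/\tau} = e^{\pi z^2/t}$, so this is exactly the asserted formula restricted to the imaginary axis.

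To pass from the imaginary axis to all of $\HH$ I would invoke the identity theorem. For each fixed $z$, both sides of the claimed identity are holomorphic in $\tau\in\HH$: the left by joint holomorphy of $\theta$ together with holomorphy of $\tau\mapsto -1/\tau$, and the right because, for $\tau\in\HH$, the point $-i\tau$ has positive real part $\mathrm{Im}\,\tau$ and so lies off the branch cut, making $(-i\tau)^{1/2}$ holomorphic on $\HH$ in the principal branch. Since they agree on the positive imaginary axis, whose points accumulate in $\HH$, they agree throughout $\HH\times\C$. The main obstacle here is not any single computation but the bookkeeping of the square-root branch: Poisson summation delivers the identity only with the positive real factor $t^{1/2}$, and one must verify that the analytic continuation of the elementary factor $(-i\tau)^{1/2}$ reduces to exactly $t^{1/2}$ on the imaginary axis, so that the continuation pins down the correct branch globally rather than its negative.
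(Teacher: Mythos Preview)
Your argument is correct. The second identity is exactly the one-line computation you give, and for the first your Poisson-summation proof on the imaginary axis followed by analytic continuation in $\tau$ is the standard route and is carried out accurately: the Gaussian Fourier transform, the identification of the summed transforms with $\theta(z/\tau;-1/\tau)$, and the branch check for $(-i\tau)^{1/2}$ are all in order. One small remark: when you apply Poisson summation you implicitly allow $z\in\C$, and it is worth noting that $x\mapsto e^{-\pi t x^2 + 2\pi i x z}$ is still Schwartz for complex $z$ since the quadratic term dominates, so the summation formula applies without restricting to real $z$.

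As for comparison with the paper: the paper does not give a proof of this proposition at all. It simply records the two transformation laws and cites Whittaker--Watson, p.~475, as the source. So you have supplied a self-contained proof where the paper relies on a reference; your approach is the classical one and is essentially what one finds in standard treatments of theta functions.
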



\section{Conformal Blocks} \label{cbdefn}

Let $\M_{N}$ be the vector space of holomorphic modular forms on $\G_0(N)$, i.e., the vector space of holomorphic functions $f : \HH \rightarrow \C$ such that
\begin{itemize}
\item $f(A \tau) = f(\tau)$ for all $A \in \G_0(N)$.

\item $f(A \tau)$ is meromorphic at $\tau = i\infty$ for all $A \in SL_2(\Z)$.
\end{itemize}

\vspace{.3cm}
\begin{defn} \label{CBdefn}
Let $\V = \M_{|G|} \otimes_\C V$. Define $\OO(g, h)$ to be the $\M_{{|G|}}$-submodule of $\V$ generated by
\begin{align*}
\left\{\begin{array}{ll}
\ide_1(u, v) = \res_z Y[u, z]v dz = u_{([0])}v & \text{for $(\mu(u), \la(u)) = (1, 1)$}, \\
\ide_2(u, v) = \res_z \wp(z, q) Y[u, z]v dz & \text{for $(\mu(u), \la(u)) = (1, 1)$}, \\
u & \text{for $(\mu(u), \la(u)) \neq (1, 1)$}, \\
\ide_3^{g, h}(u, v) = \res_z P^{\mu(u), \la(u)}(z, q) Y[u, z]v dz & \text{for $(\mu(u), \la(u)) \neq (1, 1)$}.
\end{array}\right.
\end{align*}
\end{defn}
\vspace{.3cm}
\begin{defn} \label{CBdefn2}
The space $\CC(g, h)$ of \emph{conformal blocks} is the space of functions $S : \V \times \HH \rightarrow \C$ satisfying
\begin{description}
\item [CB1] $S(x+y, \tau) = S(x, \tau) + S(y, \tau)$ for all $x, y \in \V$, and $S(f(\tau) u, \tau) = f(\tau) S(u, \tau)$ for all $f(\tau) \in \M_{|G|}$, $u \in V$.

\item [CB2] \label{axiom1} $S(u, \tau)$ is holomorphic in $\tau$ for each $u \in V$.

\item [CB3] $S(x, \tau) = 0$ for all $x \in \OO(g, h)$.

\item [CB4] For all $u \in V$ such that $(\mu(u), \la(u)) = (1, 1)$,
\begin{align} \label{ax4}
\left[(2\pi i)^2 q\frac{d}{dq} + \DD_u G_2(q)\right] S(u, \tau) = S(\res_z \zeta(z, q) L[z]u dz, \tau).
\end{align}
\end{description}
\end{defn}

An equivalent form of (\ref{ax4}) is
\begin{align} \label{ax4'}
(2\pi i)^2 q\frac{d}{dq} S(u, \tau) = -S(\res_z P(z, q) L[z]u dz, \tau).
\end{align}


\subsection{Modular transformations of conformal blocks}

Let $K$ be a positive integer such that $1/K$ divides the conformal weight of each vector in $V$ (the $C_2$-cofiniteness condition implies that $K$ exists, see the first paragraph in the proof of Lemma \ref{vofingen} below). Let $\sqrt[K]{z}$ denote the principal $K^\text{th}$ root of $z$ i.e., $-\pi/K < \arg(\sqrt[K]{z}) \leq \pi/K$. In the following theorem $({\sf c}\tau + {\sf d})^{-k}$ is defined as the appropriate integer power of $\sqrt[K]{{\sf c}\tau + {\sf d}}$.
\vspace{.3cm}
\begin{thm} \label{sl2zinv}
Let $S \in \CC(g, h)$ and $A \in SL_2(\Z)$. Define $S \cdot A : \V \times \HH \rightarrow \C$ by
\begin{align*}
[S \cdot A](u, \tau) &= ({\sf c}\tau+{\sf d})^{-k} S(a, A \tau) \quad \text{for $u \in V_{[k]}$}, \\
\text{and} \quad [S \cdot A](f(\tau)u, \tau) &= f(\tau) [S \cdot A](u, \tau) \quad \text{for $u \in V$, $f(\tau) \in \M_{{|G|}}$}.
\end{align*}
Then $S \cdot A \in \CC((g, h) \cdot A)$.
\end{thm}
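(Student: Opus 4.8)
The plan is to verify that $S \cdot A$ satisfies the four defining conditions \textbf{CB1}--\textbf{CB4} of Definition \ref{CBdefn2}, with $(g,h)$ replaced by $(g,h) \cdot A$. Condition \textbf{CB1} is immediate from the definition of $S \cdot A$ on the two generating types of elements of $\V$. Condition \textbf{CB2} (holomorphicity in $\tau$) follows since $\tau \mapsto A\tau$ is holomorphic on $\HH$, the factor $({\sf c}\tau+{\sf d})^{-k}$ is holomorphic and nonvanishing on $\HH$ (using the principal $K^{\text{th}}$-root branch), and $S(u, \cdot)$ is holomorphic by hypothesis. So the content is in \textbf{CB3} and \textbf{CB4}.

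For \textbf{CB3} I would show that $S \cdot A$ kills each generator of $\OO((g,h)\cdot A)$. The key observation is that the generating residues in Definition \ref{CBdefn} are built from the Zhu state-field correspondence $Y[u,z]$ together with one of the functions $\wp(z,q)$, $P^{\mu,\la}(z,q)$, or the constant $1$, and the eigenvalue pair attached to $u$ for $(g,h)\cdot A$ is precisely $(\mu(u),\la(u)) \cdot A$ by the action defined at the end of Section \ref{definitions}. Using the modular transformation laws already recorded — equation (\ref{Pmodular}) for $P^{\mu,\la}_k$ (equivalently for $P^{\mu,\la}(z,q)$ after tracking the $z$-variable rescaling $z \mapsto z/({\sf c}\tau+{\sf d})$), the corresponding transformation of $\wp$ and of $\zeta$, together with the quasimodularity $G_2(A\tau) = ({\sf c}\tau+{\sf d})^2 G_2(q) - 2\pi i\,{\sf c}\,({\sf c}\tau+{\sf d})$ — one rewrites, say, $\res_z P^{(\mu(u),\la(u))\cdot A}(z,q) Y[u,z]v\,dz$ evaluated at $\tau$ in terms of the analogous residue evaluated at $A\tau$, up to a power of $({\sf c}\tau+{\sf d})$ that matches the weight bookkeeping in the definition of $S\cdot A$. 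Then \textbf{CB3} for $S\cdot A$ at $(g,h)\cdot A$ reduces to \textbf{CB3} for $S$ at $(g,h)$. The cases $\ide_1$ and $\ide_2$ with $(\mu,\la)=(1,1)$ are handled similarly; note $\ide_1(u,v)=u_{([0])}v$ is modular-invariant of weight $0$ so no transformation factor intervenes, while $\ide_2$ uses the transformation of $\wp$ alone.

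Condition \textbf{CB4} is the heart of the matter and I expect it to be the main obstacle, because it mixes the differential operator $q\,d/dq$ with the modular variable change $\tau \mapsto A\tau$. The strategy is: apply $(2\pi i)^2 q\,\frac{d}{dq} + \DD_u\,G_2(q)$ to $[S\cdot A](u,\tau) = ({\sf c}\tau+{\sf d})^{-k} S(u, A\tau)$ and compute using the chain rule $\frac{d}{d\tau}(A\tau) = ({\sf c}\tau+{\sf d})^{-2}$ together with $\frac{d}{d\tau} \log({\sf c}\tau+{\sf d}) = {\sf c}/({\sf c}\tau+{\sf d})$. The derivative of the prefactor produces a term proportional to $k\,{\sf c}/({\sf c}\tau+{\sf d})$, and the $G_2$ term, via the quasimodular anomaly, produces a compensating term proportional to $\DD_u\,{\sf c}\,({\sf c}\tau+{\sf d})$; since $k = \DD_u$ for $u \in V_{[k]}$, these are designed to cancel, leaving exactly $({\sf c}\tau+{\sf d})^{-k-2}$ times $\big[(2\pi i)^2 q\frac{d}{dq} + \DD_u G_2\big] S(u,\cdot)$ evaluated at $A\tau$. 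Applying \textbf{CB4} for $S$ turns this into $({\sf c}\tau+{\sf d})^{-k-2} S\big(\res_z \zeta(z,q) L[z]u\,dz,\, A\tau\big)$ at argument $A\tau$; finally one recognizes, using the transformation law of $\zeta(z,\tau)$ under $A$ (again with the rescaling of $z$) and the fact that $L[z]u$ has Zhu weight shifted appropriately, that this equals $[S\cdot A]\big(\res_z \zeta(z,q) L[z]u\,dz,\, \tau\big)$. The delicate points will be (i) keeping precise track of the $z$-variable rescaling by $({\sf c}\tau+{\sf d})$ hidden in passing from $z$ at level $A\tau$ to $z$ at level $\tau$ — this is where the $e^{\pi i z^2/\tau}$-type factors in Propositions \ref{etatrans} and \ref{Jactrans} and their $\zeta,\wp$-analogues enter — and (ii) confirming the weight shifts: $\res_z$ of an order-$(m+1)$ pole function against $Y[u,z]$ raises Zhu weight by $m$, so the weights genuinely match on both sides. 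Once these bookkeeping identities are pinned down, the four conditions are verified and $S\cdot A \in \CC((g,h)\cdot A)$.
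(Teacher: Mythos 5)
Your overall strategy --- verify \textbf{CB1}--\textbf{CB4} for $S\cdot A$ by reducing each to the corresponding axiom for $S$, with the quasimodular anomaly of $G_2$ cancelling against the derivative of the prefactor $({\sf c}\tau+{\sf d})^{-\DD_u}$ in \textbf{CB4} --- is exactly the paper's proof, and your \textbf{CB4} computation is correct as described. The one place you diverge is in handling the residue generators: you propose to transform the kernels $\wp$, $\zeta$, $P^{\mu,\la}$ jointly in $(z,\tau)$ via a rescaling $z\mapsto z/({\sf c}\tau+{\sf d})$, and you flag possible $e^{\pi i z^2/\tau}$-type correction factors as the delicate point. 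The paper never changes the variable $z$ at all: since $\res_z$ only sees the formal Laurent expansion of the kernel about $z=0$, it writes $\ide_2(u,v)=u_{([-2])}v+\sum_{k\ge 2}(2k-1)G_{2k}(\tau)\,u_{([2k-2])}v$ and expands $\ide_3^{g,h}(u,v)$ as a combination of $u_{([-1])}v$ and $\sum_{k\ge 0}P_k^{\mu,\la}(\tau)\,u_{([k])}v$ (likewise $\zeta$ in \textbf{CB4} via the $G_k$, $k\ge 4$), then transforms coefficient-by-coefficient using (\ref{Pmodular}) and honest modularity of $G_{2k}$ for $k\ge 2$; the weight of each coefficient exactly complements the Zhu weight of $u_{([n])}v$, so every term scales by the same power of $({\sf c}\tau+{\sf d})$. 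Your joint-transformation version is equivalent --- resumming (\ref{Pmodular}) gives $P^{\mu,\la}(z,A\tau)=({\sf c}\tau+{\sf d})\,P^{(\mu,\la)\cdot A}(({\sf c}\tau+{\sf d})z,\tau)$ at the level of Laurent expansions --- but the worry about theta-like exponential factors is unfounded (no such factors arise for these kernels, precisely because their Taylor coefficients at $z=0$ are genuinely modular, apart from $G_2$ which does not occur in the expansions of $\wp$ or $\zeta$), and the coefficient-wise route spares you from having to establish any joint transformation law at all.
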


\begin{proof}
Fix $g, h \in G$, $A \in SL_2(\Z)$, and let $S \in \CC(g, h)$. It is obvious that $S \cdot A$ satisfies {\bf CB1}. Because $S(u, \tau)$ is holomorphic in $\tau$, $S(u, A\tau)$ is too. Because ${\sf c}\HH + {\sf d}$ is disjoint from the branch cut, $({\sf c}\tau+{\sf d})^{-k}$ is holomorphic in $\tau$. Therefore $S \cdot A$ satisfies {\bf CB2}.

Clearly $(\mu(u), \la(u)) = (1, 1)$ if and only if $(\mu(u), \la(u)) \cdot A = (1, 1)$.

Suppose $(\mu(u), \la(u)) = (1, 1)$. We have $S(\ide_1(u, v), \tau) = 0$, hence
\[
[S \cdot A](\ide_1(u, v), \tau) = ({\sf c}\tau+{\sf d})^{-\DD_u-\DD_v-1} S(\ide_1(u, v), A\tau) = 0.
\]

Next we have
\begin{align*}
[S \cdot A](\ide_2(u, v), \tau)
= {} & [S \cdot A](u_{([-2])}v, \tau)
+ \sum_{k=2}^\infty (2k-1) G_{2k}(\tau) [S \cdot A](u_{([2k-2])}v, \tau) \\
= {} & ({\sf c}\tau+{\sf d})^{-\DD_u-\DD_v-1} S(u_{([-2])}v, A \tau) \\
& + \sum_{k=2}^\infty (2k-1) G_{2k}(\tau) ({\sf c}\tau+{\sf d})^{-\DD_u-\DD_v+2k-1} S(u_{([2k-2])}v, A \tau) \\
= {} & ({\sf c}\tau+{\sf d})^{-\DD_u-\DD_v-1} \left[ S(u_{([-2])}v, A \tau) + \sum_{k=2}^\infty (2k-1) G_{2k}(A \tau) S(u_{([2k-2])}v, A \tau) \right] \\
= {} & ({\sf c}\tau+{\sf d})^{-\DD_u-\DD_v-1} S(\ide_2(u, v), A \tau)
= 0.
\end{align*}

Now suppose $(\mu, \la) = (\mu(u), \la(u))_{(g, h)} \neq (1, 1)$ (so $(\mu(u), \la(u))_{(g, h) \cdot A} \neq (1, 1)$ too). We have
\begin{align*}
[S \cdot A](\ide_3^{(g, h) \cdot A}(u, v), \tau)
= {} & [S \cdot A](u_{([-1])}v, \tau)
+ \sum_{k=0}^\infty P^{(\mu, \la) \cdot A}_k(\tau) [S \cdot A](u_{([k])}v, \tau) \\
= {} & ({\sf c}\tau+{\sf d})^{-\DD_u-\DD_v} S(u_{([-1])}v, A \tau) \\
& - ({\sf c}\tau+{\sf d})^{-\DD_u-\DD_v}\sum_{k=0}^\infty P^{(\mu, \la) \cdot A}_k(\tau) ({\sf c}\tau+{\sf d})^{k+1} S(u_{([k])}v, A \tau) \\
= {} & ({\sf c}\tau+{\sf d})^{-\DD_u-\DD_v} \left[ S(u_{([-1])}v, A \tau) - \sum_{k=0}^\infty P^{\mu, \la}_k(A\tau) S(u_{([k])}v, A \tau) \right] \\
= {} & ({\sf c}\tau+{\sf d})^{-\DD_u-\DD_v} S(\ide_3^{g, h}(u, v), A \tau)
= 0
\end{align*}
(having used the transformation property (\ref{Pmodular}) of $P_k^{\mu, \la}$). Finally note that $[S \cdot A](u, \tau) = 0$ whenever $(\mu(u), \la(u)) \neq (1, 1)$ because the same is true for $S$. Thus $S \cdot A$ satisfies {\bf CB3}.

Let $(\mu(u), \la(u)) = (1, 1)$ again. By a calculation similar to the one above, we have
\begin{align*}
[S \cdot A](\res_z \zeta(z, \tau) L[z]u dz, \tau)
= ({\sf c}\tau+{\sf d})^{-\DD_u-2} S(\res_z \zeta(z, A\tau) L[z]u dz, A\tau).
\end{align*}
On the other hand
\begin{align*}
& \left[2\pi i \frac{d}{d\tau} + \DD_u G_2(\tau)\right] [S \cdot A](u, \tau) \\
= {} & \left[2\pi i \frac{d}{d\tau} + \DD_u G_2(\tau)\right] ({\sf c}\tau+{\sf d})^{-\DD_u} S(u, A\tau) \\
= {} & \left[ -2\pi i {\sf c} \DD_u ({\sf c}\tau+{\sf d})^{-\DD_u-1}
+ ({\sf c}\tau+{\sf d})^{-\DD_u} \frac{d(A\tau)}{d\tau} \frac{d}{d(A\tau)}
+ \DD_u G_2(\tau) ({\sf c}\tau+{\sf d})^{-\DD_u} \right] S(u, A\tau) \\
= {} & ({\sf c}\tau+{\sf d})^{-\DD_u-2} \left[ -2\pi i {\sf c} \DD_u ({\sf c}\tau+{\sf d}) + \frac{d}{d(A\tau)}
+ \DD_u G_2(\tau) ({\sf c}\tau+{\sf d})^2 \right] S(u, A\tau) \\
= {} & ({\sf c}\tau+{\sf d})^{-\DD_u-2} \left[ \frac{d}{d(A\tau)} + \DD_u G_2(A\tau) \right] S(u, A\tau).
\end{align*}
So $S \cdot A$ satisfies {\bf CB4}.
\end{proof}


\section{Differential equations satisfied by conformal blocks} \label{ODEsection}

We recall the crucial \emph{$C_2$-cofiniteness} condition introduced in \cite{Zhu}. This condition, together with the conformal block axioms, implies the existence of an ordinary differential equation (ODE) satisfied by the conformal blocks.

\vspace{.3cm}
\begin{defn} \label{C2cofinite}
The vertex operator algebra $V$ is said to be \emph{$C_2$-cofinite} if the subspace
\[
C_2(V) = \spa \{u_{(-2)}v | u, v \in V\} \subseteq V
\]
has finite codimension in $V$.
\end{defn}

\vspace{.3cm}
\begin{lemma} \label{vofingen}
If $V$ is $C_2$-cofinite then the $\M_{|G|}$-module $\V / \OO(g, h)$ is finitely generated, for each $g, h \in G$.
\end{lemma}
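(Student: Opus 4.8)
The plan is to show that $V$ itself, modulo the image of $C_2(V)$ and the relations generating $\OO(g,h)$, is spanned over $\M_{|G|}$ by finitely many elements; since $\M_{|G|}$ is a finitely generated $\C$-algebra (indeed Noetherian) this is enough. First I would establish the existence of the integer $K$ dividing every conformal weight: $C_2$-cofiniteness forces $V$ to be spanned by finitely many $L_{[0]}$-eigenvectors modulo $C_2(V)$, hence (using that $C_2(V)$ is $L_0$-invariant and that iterated $(-1)$-products generate $V$ from any spanning set) the conformal weights of $V$ lie in a finitely generated subgroup of $\Q$, so a common denominator $K$ exists. This also pins down the spectrum of $\mu(u),\la(u)$ to roots of unity, which is why $|G|$-modular forms are the right coefficient ring.

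Next I would set up the standard filtration/degeneration argument from \cite{Zhu} and \cite{DLMorbifold}. The key identity is that for $u$ of sufficiently large Zhu weight, $u$ can be rewritten modulo $\OO(g,h)$ as a $\M_{|G|}$-linear combination of lower-weight vectors. Concretely, for a vector $u$ with $(\mu(u),\la(u))=(1,1)$ one expands the generator $\ide_1(a,b)=a_{([0])}b$ and $\ide_2(a,b)=\res_z\wp(z,q)Y[a,z]b\,dz$ in powers of $z$; the leading terms express $a_{([-2])}b$ (a "$C_2$-type" product in the Zhu algebra structure) in terms of $a_{([j])}b$ for $j\geq -1$ with modular-form coefficients $G_{2k}(q)$. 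Similarly, for $(\mu(u),\la(u))\neq(1,1)$ the generator $\ide_3^{g,h}(a,b)$ together with the bare generator $a$ lets one trade $a_{([-2])}b$ against $a_{([j])}b$, $j\geq -1$, with coefficients $P_k^{\mu,\la}(q)\in\M_{|G|}$. Combining these with the $C_2$-cofiniteness hypothesis — which says $V=C_{[2]}(V)+W$ for some finite-dimensional $W$, where $C_{[2]}(V)=\spa\{a_{([-2])}b\}$ (equality of $C_2(V)$ and its Zhu-analogue up to filtration is a standard fact, see \cite{Zhu}) — one shows by induction on Zhu weight that every $u\in V$ lies in $\M_{|G|}\cdot W+\OO(g,h)$. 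The base of the induction is that in each bounded-below weight there are only finitely many weights below any given bound with nonzero $V_{[k]}$, again by $C_2$-cofiniteness.

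The induction step is where the real work lies, and I expect it to be the main obstacle: one must check that the "error terms" produced when rewriting $a_{([-2])}b$ via the relations $\ide_i$ genuinely have strictly smaller Zhu weight, and that the recursion terminates. This requires the Zhu-weight grading to interact correctly with the products $a_{([j])}b$ — namely $\DD_{a_{([j])}b}=\DD_a+\DD_b-j-1$ — so that $j\geq -1$ gives a strict decrease compared to $j=-2$, and it requires knowing that $L_{[-1]}$-derivatives (which appear because $\OO(g,h)$ must be closed under them to match {\bf CB4}) do not increase the generating set. One also has to handle the interplay with the group grading: the relations only rewrite vectors within a fixed $(\mu,\la)$-eigenspace, but since $G$ is finite there are finitely many eigenspaces and the argument runs in each separately. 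Once the finitely many generators $W$ (taken to be a fixed finite-dimensional graded complement of $C_{[2]}(V)$, enlarged to be $g,h$-stable) are produced, finite generation of $\V/\OO(g,h)$ over $\M_{|G|}$ is immediate, and I would cite \cite{DLMorbifold} for the detailed bookkeeping rather than reproduce it.
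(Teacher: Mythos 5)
Your proposal is correct and follows essentially the same route as the paper's proof: a finite-dimensional complement $W$ with $V = W + C_2(V)$, deduction of the common denominator $K$ from the weight formula for $u_{(-2)}v$, and induction on Zhu weight using $\ide_2$ (resp.\ $\ide_3$ applied to $L_{[-1]}u$) to rewrite $u_{([-2])}v$ modulo $\OO(g,h)$ in terms of products $u_{([j])}v$, $j \geq -1$, of strictly smaller Zhu weight with coefficients in $\M_{|G|}$. The key bookkeeping points you flag (the degree drop $\DD_{u_{([j])}v} = \DD_u + \DD_v - j - 1$, the comparison of the $L_0$- and $L_{[0]}$-filtrations, and the role of the bare generators $u$ for $(\mu(u),\la(u)) \neq (1,1)$) are exactly the ones the paper checks.
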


\begin{proof}
Since $C_2(V)$ is a graded subspace of $V$ (under the $\D$-grading) there exists $n_0 \in \Z_+$ such that $V_n \subseteq C_2(V)$ for all $n > n_0$. Let $W = \oplus_{k \leq n_0} V_k \subseteq V$.

Since $\D_{u_{(-2)}v} = \D_u + \D_v + 1$, every vector in $V$ with conformal weight greater than $n_0$ can be expressed in terms of $-2^\text{nd}$ products of vectors in $W$. Therefore all conformal weights in $V$ are integer multiples of $1/K$ for some positive integer $K$.

Let $\W = \M_{|G|} W \subseteq \V$. Recall that
\begin{align*}
L_{[0]} = L_{0} + \sum_{i \geq 1} \al_{0i} L_{i}
\quad \text{and} \quad
L_{0} = L_{[0]} + \sum_{i \geq 1} \beta_{0i} L_{[i]},
\end{align*}
for certain $\al_{0i}, \beta_{0i} \in \C$. Suppose $u \in V_n$, i.e., $L_0 u = nu$. Then $L_{[0]}u = nu$ modulo terms with strictly lower $\D$. Similarly if $v \in V_{[n]}$ then $L_0 v = nv$ modulo terms with strictly lower $\DD$. Thus $\oplus_{k \leq n} V_k = \oplus_{k \leq n} V_{[k]}$ for any $n \in \Q$.

We will prove by induction on conformal weight (which is possible since conformal weights are multiples of $1/K$ and are bounded below) that $V_{[n]} \subseteq \W + \OO(g, h)$ for all $n$. According to the last paragraph this holds for $n \leq n_0$ already.

Let $n > n_0$, and let $x \in V_{[n]}$. Since $V = W + C_2(V)$ we may write $x$ as $w \in W$ plus a sum of vectors of the form
\[
u_{(-2)}v = u_{([-2])}v + \sum_{j > -2} \al_{-2, j} u_{([j])}v,
\]
where we assume $u, v$ are homogeneous in the $\DD$-grading. It is clear that we can choose all the pairs of vectors $u, v$ so that $\DD_u + \DD_v + 1 \leq n$. Therefore all the terms in the $j$-summation have $\DD < n$, hence they lie in $\W + \OO(g, h)$ by the inductive assumption. It suffices to show that $u_{([-2])}v \in \W + \OO(g, h)$.

If $(\mu(u), \la(u)) = (1, 1)$ then
\[
\ide_2(u, v) = u_{([-2])}v + \sum_{k=2}^\infty (2k-1) G_{2k}(\tau) u_{([2k-2])}v \in \OO(g, h).
\]
The terms in the summation have $\DD < n$, hence they lie in $\W + \OO(g, h)$ by the inductive assumption. Therefore $u_{([-2])}v$ does too.

If $(\mu(u), \la(u)) \neq (1, 1)$ then
\[
\ide_3(u, v) = -u_{([-1])}v + \sum_{k=0}^\infty P_k^{\mu(u), \la(u)}(q) u_{([k])}v \in \OO(g, h).
\]
Substituting $L_{[-1]}u$ in place of $u$ shows that
\[
u_{([-2])}v - \sum_{k=0}^\infty k P_k^{\mu(u), \la(u)}(q) u_{([k-1])}v \in \OO(g, h)
\]
too. As before, $u_{([-2])}v \in \W + \OO(g, h)$.
\end{proof}

\vspace{.3cm}
\begin{rmrk} \label{C2remark} {\ }
Inspection of the proof of Lemma \ref{vofingen} reveals that the $C_2$-cofiniteness condition can be weakened to the following: $V / C^{(g, h)}$ is finite dimensional where $C^{(g, h)}$ is defined to be the span of the vectors
\[
u_{(-2)}v \,\,\, \text{for $(\mu(u), \la(u)) = (1, 1)$}, \quad \text{and} \quad u_{(-1)}v \,\,\, \text{for $(\mu(u), \la(u)) \neq (1, 1)$}.
\]
Therefore all results of this paper hold with $C_2$-cofiniteness replaced by this weaker condition.
\end{rmrk}

The following lemma is stated in \cite{DLMorbifold}.
\vspace{.3cm}
\begin{lemma} \label{Mnoeth}
For any integer $N \geq 1$, $\M_N$ is a Noetherian ring.
\end{lemma}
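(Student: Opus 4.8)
The plan is to show that $\M_N = \bigoplus_{k \geq 0} \M_N^{(k)}$, the graded ring of holomorphic modular forms of all weights for $\G_0(N)$, is a finitely generated $\C$-algebra; Hilbert's basis theorem then gives Noetherianity. The key structural input is that $\M_N$ is the ring of global sections of an ample line bundle on the (compact) modular curve $X_0(N) = \HH^*/\G_0(N)$, so finite generation is a standard consequence of projective algebraic geometry. Concretely, I would first recall that $X_0(N)$ is a compact Riemann surface (the quotient of the extended upper half plane $\HH^* = \HH \cup \Q \cup \{i\infty\}$ by $\G_0(N)$, with its standard complex structure), and that weight-$2k$ holomorphic modular forms (allowing only holomorphy, i.e.\ finite order, at the cusps, which is exactly the second bullet in the definition of $\M_N$ above) are identified with global sections of a line bundle $\mathcal{L}^{\otimes k}$ for a suitable ample $\mathcal{L}$ on $X_0(N)$ — one must be slightly careful at elliptic points and cusps, where $\omega^{\otimes k}$ gets twisted by divisors supported on those points, but the twisted bundle is still ample for $k$ large.

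Second, I would invoke the general fact that for an ample line bundle $\mathcal{L}$ on a projective variety $Y$ the section ring $R(Y, \mathcal{L}) = \bigoplus_{k \geq 0} H^0(Y, \mathcal{L}^{\otimes k})$ is a finitely generated $\C$-algebra. On a compact Riemann surface this is elementary: each $H^0(Y, \mathcal{L}^{\otimes k})$ is finite dimensional by compactness, and for $k \gg 0$ the multiplication maps $H^0(\mathcal{L}^{\otimes k}) \otimes H^0(\mathcal{L}^{\otimes m}) \to H^0(\mathcal{L}^{\otimes(k+m)})$ are surjective, so the whole ring is generated by $\bigoplus_{k \leq k_0} H^0(\mathcal{L}^{\otimes k})$ for some finite $k_0$. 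Applying this to $Y = X_0(N)$ and the (twisted) modular line bundle shows $\M_N$ is generated as a $\C$-algebra by the finitely many modular forms of weight $\leq k_0$ for some $k_0 = k_0(N)$.

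Third, once $\M_N$ is a finitely generated commutative $\C$-algebra, it is a quotient of a polynomial ring $\C[x_1, \ldots, x_r]$, which is Noetherian by the Hilbert basis theorem, and quotients of Noetherian rings are Noetherian; this finishes the proof. An alternative, more hands-on route for small $N$ (avoiding the Riemann surface language) would be to cite the classical explicit descriptions — e.g.\ $\M_1 = \C[E_4, E_6]$, and similarly finitely generated presentations for $\G_0(N)$ in terms of Eisenstein series and eta quotients — but the uniform argument via ampleness on $X_0(N)$ is cleaner. The main obstacle is purely bookkeeping: pinning down the correct twisting of the modular line bundle at the elliptic points and cusps so that one genuinely recovers the space $\M_N$ as defined here (holomorphic on $\HH$, meromorphic — hence, by the standard Fourier-expansion argument at each cusp, actually holomorphic — at the cusps), rather than forms with prescribed pole or zero orders. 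This is standard (see, e.g., Shimura or Diamond--Shurman), so I would simply cite it, and indeed the paper itself attributes the statement to \cite{DLMorbifold}.
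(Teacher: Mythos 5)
The paper offers no proof of this lemma at all --- it simply records the statement and attributes it to \cite{DLMorbifold} --- so there is nothing internal to compare your argument against line by line; you are supplying a proof where the paper gives a citation. Your route (identify $\M_N$ with a section ring $\bigoplus_k H^0(X,\mathcal{L}^{\otimes k})$ of an ample line bundle on the compact modular curve, deduce finite generation as a $\C$-algebra, and conclude by the Hilbert basis theorem plus stability of Noetherianity under quotients) is correct and is essentially the standard argument underlying the cited result; an equivalent low-tech variant is to note that the full ring of forms of level $N$ is a finitely generated module over $\C[E_4,E_6]$, hence a finitely generated $\C$-algebra.

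Two caveats, neither fatal. First, the paper's definition of $\M_N$ only requires $f(A\tau)$ to be \emph{meromorphic} at $i\infty$, not holomorphic, so $\M_N$ is not literally the section ring on the compact curve: it is the ring of weakly holomorphic forms. Your argument covers the subring of forms holomorphic at the cusps; to get the full $\M_N$ observe that every weakly holomorphic form becomes holomorphic at all cusps after multiplication by a sufficiently high power of $\Delta$ (which is nonvanishing on $\HH$ and vanishes at every cusp), so $\M_N$ is the localization of the holomorphic section ring at $\Delta$, and localizations of Noetherian rings are Noetherian. Second, the group the paper calls $\G_0(N)$ is, judging from the congruence conditions it imposes (${\sf a}\equiv{\sf d}\equiv 1$, ${\sf b}\equiv{\sf c}\equiv 0 \pmod N$), actually the principal congruence subgroup $\Gamma(N)$; this matters for the existence of odd-weight forms (which the paper uses, e.g.\ $P_k^{\mu,\la}$ of weight $k+1$) and for which compact curve you work on, but your argument goes through verbatim with $X(N)$ in place of $X_0(N)$, so this is a bookkeeping adjustment rather than a gap.
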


\vspace{.3cm}
\begin{lemma} \label{Lm2reduc}
Let $V$ be $C_2$-cofinite, let $u \in V$, and let $S \in \CC(g, h)$. There exists $m \in \Z_+$ and $r_0(\tau), \ldots r_{m-1}(\tau) \in \M_{|G|}$ such that
\begin{align} \label{Sfin}
S(L_{[-2]}^m u, \tau) + \sum_{i=0}^{m-1} r_i(\tau) S(L_{[-2]}^i u, \tau) = 0.
\end{align}
\end{lemma}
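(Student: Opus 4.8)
The plan is to exploit the finite generation of the $\M_{|G|}$-module $\V/\OO(g,h)$ (Lemma \ref{vofingen}) together with the Noetherian property of $\M_{|G|}$ (Lemma \ref{Mnoeth}) to produce a linear dependence among the vectors $L_{[-2]}^i u$ modulo $\OO(g,h)$. More precisely, fix $u \in V$ and consider the $\M_{|G|}$-submodule $R \subseteq \V/\OO(g,h)$ generated by the countable family $\{\,\overline{L_{[-2]}^i u}\,:\, i \in \Z_+\,\}$. Since $\M_{|G|}$ is Noetherian and $\V/\OO(g,h)$ is finitely generated over it, $\V/\OO(g,h)$ is a Noetherian module, so $R$ is a \emph{finitely generated} $\M_{|G|}$-module. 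Hence there is some $m \in \Z_+$ such that $\overline{L_{[-2]}^m u}$ lies in the $\M_{|G|}$-span of $\overline{u}, \overline{L_{[-2]} u}, \ldots, \overline{L_{[-2]}^{m-1} u}$; that is, there exist $r_0(\tau), \ldots, r_{m-1}(\tau) \in \M_{|G|}$ with
\[
L_{[-2]}^m u + \sum_{i=0}^{m-1} r_i(\tau) L_{[-2]}^i u \in \OO(g,h).
\]

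The conclusion then follows immediately by applying $S$. Indeed, $S$ is $\M_{|G|}$-linear by {\bf CB1}, so $S\big(r_i(\tau) L_{[-2]}^i u,\tau\big) = r_i(\tau)\, S(L_{[-2]}^i u,\tau)$, and $S$ vanishes on $\OO(g,h)$ by {\bf CB3}. Therefore
\[
S(L_{[-2]}^m u,\tau) + \sum_{i=0}^{m-1} r_i(\tau)\, S(L_{[-2]}^i u,\tau) = 0,
\]
which is exactly (\ref{Sfin}). One should also note that each $L_{[-2]}^i u$ genuinely lies in $V$ (not merely in $\V$), so that writing $S(L_{[-2]}^i u,\tau)$ makes sense; this is clear since $L_{[-2]}$ is an operator on $V$.

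The only real content is the first paragraph, and within it the step that requires care is passing from "finitely generated + Noetherian ring" to "Noetherian module." This is standard commutative algebra once we know $\M_{|G|}$ is Noetherian (Lemma \ref{Mnoeth}): a finitely generated module over a Noetherian ring is a Noetherian module, hence all of its submodules — in particular the submodule $R$ generated by the images of the $L_{[-2]}^i u$ — are finitely generated. I expect no obstacle here beyond citing these facts correctly; the subtlety is purely that the family $\{L_{[-2]}^i u\}$ is infinite, so one genuinely needs the ascending chain condition (applied to the chain of submodules generated by initial segments of this family) rather than a naive dimension count, since $\V/\OO(g,h)$ is typically infinite-dimensional over $\C$.
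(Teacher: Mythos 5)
Your proposal is correct and follows essentially the same route as the paper: the paper likewise considers the ascending chain of $\M_{|G|}$-submodules of $\V/\OO(g,h)$ generated by the images of $u, L_{[-2]}u, \ldots, L_{[-2]}^n u$, invokes finite generation of $\V/\OO(g,h)$ (Lemma \ref{vofingen}) plus the Noetherian property of $\M_{|G|}$ (Lemma \ref{Mnoeth}) to stabilize the chain, and then applies {\bf CB1} and {\bf CB3}. No gaps.
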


\begin{proof}
Let $I_n(u) \subseteq \V/\OO(g, h)$ be the $\M_{|G|}$-submodule generated over $\M_{|G|}$ by the images of $u, L_{[-2]}u, \ldots L_{[-2]}^n u$. Because $\V/\OO(g, h)$ is a finitely generated $\M_{|G|}$-module and $\M_{|G|}$ is a Noetherian ring, $\V/\OO(g, h)$ is a Noetherian $\M_{|G|}$-module: meaning that the ascending chain $I_0(u) \subseteq I_1(u) \subseteq \ldots$ stabilizes. For some $m \in \Z_+$ we have $I_m(u) = I_{m-1}(u)$, which implies
\begin{align*}
L_{[-2]}^m u + \sum_{i=0}^{m-1} r_i(\tau) L_{[-2]}^i u \in \OO(g, h)
\end{align*}
for some $r_i(\tau) \in \M_{|G|}$. Equation (\ref{Sfin}) follows from this formula and {\bf CB3}.
\end{proof}

The axiom {\bf CB4} states an equality between $S(L_{[-2]}u, \tau)$ and $(q \frac{d}{dq}) S(u, \tau)$ modulo `terms of lower order', i.e., terms of the form $S(v, \tau)$ with $\DD_v < \DD_u$. We use this to convert (\ref{Sfin}) into an ODE satisfied by $S(u, \tau)$. More precisely we have

\vspace{.3cm}
\begin{thm} \label{getdiff}
Let $\oq = q^{1/{|G|}}$, and let $S \in \CC(g, h)$. For each $u \in V$, $S(u, \tau)$ satisfies an ODE of the form
\[
(\oq\frac{d}{d\oq})^m S(u, \tau) + \sum_{i=0}^{m-1} g_i(\oq) (\oq\frac{d}{d\oq})^i S(u, \tau) + \sum_{j=0}^{m-1} \sum_{k=0}^{\infty} h_{jk}(\oq) (\oq\frac{d}{d\oq})^j S(x_{jk}, \tau) = 0,
\]
where the $x_{jk} \in V$ are of strictly lower conformal weight than $u$ and the functions $g_i(\oq)$ and $h_{jk}(\oq)$ are polynomials in elements of $\M_{|G|}$ and derivatives of $G_2$ with respect to $\oq$. In particular these functions are all regular at $\oq = 0$, and so the ODE has a regular singular point there.
\end{thm}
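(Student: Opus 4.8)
The plan is to collect the functions $(\oq\tfrac{d}{d\oq})^j S(u,\tau)$ into a finitely generated module over a Noetherian differential ring and then read off the ODE from the ascending chain condition; everything rests on the fact that axiom {\bf CB4}, once unwound, expresses $q\tfrac{d}{dq}S(u,\tau)$ through $S$ evaluated on finitely many vectors of $V$, with coefficients in one fixed small ring. (When $(\mu(u),\la(u)) \neq (1,1)$ one has $u \in \OO(g,h)$, so $S(u,\tau) = 0$ by {\bf CB3} and the assertion is vacuous; so the content is the case $(\mu(u),\la(u)) = (1,1)$.)

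The ring I would use is $\mathcal{R} = \M_{|G|}[G_2]$, the subring of holomorphic functions near $\oq = 0$ generated over $\M_{|G|}$ by the weight two quasimodular Eisenstein series $G_2$. Three standard facts: every element of $\mathcal{R}$ is holomorphic at $q = 0$, hence at $\oq = 0$ (this holds for $\M_{|G|}$ and for $G_2$); $\mathcal{R}$ is stable under $q\tfrac{d}{dq}$, since the ring of quasimodular forms on $\G_0(|G|)$ equals $\M_{|G|}[G_2]$ and is closed under $q\tfrac{d}{dq}$ (Ramanujan's identities); and $\mathcal{R}$ is Noetherian, being a finitely generated $\C$-algebra (by Lemma \ref{Mnoeth} the ring $\M_{|G|}$ is Noetherian, and $\mathcal{R}$ adjoins one further generator). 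Note $\oq\tfrac{d}{d\oq} = |G|\, q\tfrac{d}{dq}$. To unwind {\bf CB4}, expand $\zeta(z,\tau) = z^{-1} - \sum_{j \geq 4} z^{j-1} G_j(\tau)$, use $G_j = 0$ for odd $j$ and $L[z]v = \sum_{n \in \Z} L_{[n]}v\, z^{-n-2}$, and take the residue:
\begin{align*}
\res_z \zeta(z,q) L[z]v\, dz = L_{[-2]}v - \sum_{k \geq 2} G_{2k}(q)\, L_{[2k-2]}v,
\end{align*}
a finite sum since the conformal weights of $V$ are bounded below, so $L_{[2k-2]}v = 0$ for large $k$. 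Hence, for $v$ of homogeneous Zhu weight with $(\mu(v),\la(v)) = (1,1)$, axiom {\bf CB4} reads
\begin{align*}
(2\pi i)^2\, q\tfrac{d}{dq}\, S(v,\tau) = S(L_{[-2]}v,\tau) - \DD_v\, G_2(q)\, S(v,\tau) - \sum_{k \geq 2} G_{2k}(q)\, S(L_{[2k-2]}v,\tau),
\end{align*}
and both sides vanish when $(\mu(v),\la(v)) \neq (1,1)$. In every case the right-hand side lies in $\sum_{w \in V} \mathcal{R}\, S(w,\tau)$.

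Now set $\mathcal{N} = \sum_{w \in V} \mathcal{R}\, S(w,\tau)$. By Lemma \ref{vofingen} the $\M_{|G|}$-module $\V/\OO(g,h)$ is finitely generated, say by the classes of $v_1, \dots, v_N \in V$; since $S$ is $\M_{|G|}$-linear and annihilates $\OO(g,h)$ (by {\bf CB1} and {\bf CB3}), every $S(w,\tau)$ with $w \in V$ is an $\M_{|G|}$-combination of $S(v_1,\tau), \dots, S(v_N,\tau)$, so $\mathcal{N} = \sum_{l=1}^N \mathcal{R}\, S(v_l,\tau)$ is finitely generated over $\mathcal{R}$, hence Noetherian. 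The displayed identity (applied to the $g,h$-eigencomponents of $w$, together with the vanishing in the non-$(1,1)$ case) shows $q\tfrac{d}{dq} S(w,\tau) \in \mathcal{N}$ for every $w \in V$, and with the $q\tfrac{d}{dq}$-stability of $\mathcal{R}$ this makes $\mathcal{N}$ stable under $q\tfrac{d}{dq}$. Consequently the ascending chain of $\mathcal{R}$-submodules $\sum_{j=0}^n \mathcal{R}\,(q\tfrac{d}{dq})^j S(u,\tau)$ of $\mathcal{N}$ stabilizes, giving $(q\tfrac{d}{dq})^m S(u,\tau) = \sum_{j=0}^{m-1} g_j(\tau)\,(q\tfrac{d}{dq})^j S(u,\tau)$ for some $m$ and some $g_j \in \mathcal{R}$. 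Rewriting $q\tfrac{d}{dq} = |G|^{-1}\, \oq\tfrac{d}{d\oq}$ and clearing constants turns this into an equation of the desired shape whose coefficients lie in $\mathcal{R} = \M_{|G|}[G_2]$ — i.e.\ polynomials in elements of $\M_{|G|}$ and in $G_2$ (equivalently, in $G_2$ and its $\oq$-derivatives), all regular at $\oq = 0$ — so $\oq = 0$ is a regular singular point. (This route in fact yields a genuine ODE for $S(u,\tau)$ alone, i.e.\ the stated form with all $h_{jk} = 0$; the general form follows a fortiori.)

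I do not foresee a serious obstacle in this argument; the one point to keep in mind is that $q\tfrac{d}{dq}$ raises the ``$L_{[-2]}$-degree'' of the argument of $S$, and one must check that this growth is absorbed by the finite generation of $\mathcal{N}$ — which is precisely what the displayed {\bf CB4} arranges. If instead one wants to reproduce the stated form literally, complete with the lower-weight terms $S(x_{jk},\tau)$, one should start from Lemma \ref{Lm2reduc}'s relation $S(L_{[-2]}^m u,\tau) + \sum_{i<m} r_i(\tau) S(L_{[-2]}^i u,\tau) = 0$ and substitute the displayed {\bf CB4} repeatedly, reducing each $S(L_{[-2]}^i u,\tau)$ to $(2\pi i)^{2i}(q\tfrac{d}{dq})^i S(u,\tau)$ plus an $\mathcal{R}$-combination of lower-order derivatives of $S(u,\tau)$ and of $S(x,\tau)$ for $x$ of strictly lower Zhu weight. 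There the only delicate step is the cross term $\sum_{k\geq 2} G_{2k}(q)\, S(L_{[2k-2]}L_{[-2]}^{i-1}u,\tau)$, whose arguments need not have Zhu weight below $\DD_u$: one commutes $L_{[2k-2]}$ ($k \geq 2$) rightward through the $L_{[-2]}$'s by the Virasoro relations, noting that the resulting vectors are either $L_{[-2]}^{c'}$ applied to something of strictly smaller Zhu weight or a scalar multiple of $L_{[-2]}^{c'}u$ with $c' < i$ (the only same-weight contributions come from $L_{[0]}$-actions and central terms), and then applies the same reduction recursively. This Virasoro bookkeeping is the part that needs genuine care.
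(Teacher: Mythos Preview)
Your argument is correct. The paper itself does not prove this theorem but defers to \cite{DLMorbifold}; the route it sketches (and which DLM carry out) is the one you describe in your final paragraph: start from Lemma~\ref{Lm2reduc}'s relation $S(L_{[-2]}^m u,\tau) + \sum_{i<m} r_i(\tau) S(L_{[-2]}^i u,\tau) = 0$, then iteratively substitute the unwound {\bf CB4} to replace each $S(L_{[-2]}^i u,\tau)$ by $(q\tfrac{d}{dq})^i S(u,\tau)$ plus lower-order derivative terms plus $S$ evaluated on vectors of strictly lower Zhu weight, handling the cross terms by Virasoro commutation as you indicate. This naturally produces the inhomogeneous form of the ODE with the $S(x_{jk},\tau)$ terms, which the paper then uses to deduce the Frobenius expansion by induction on $\DD_u$.

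Your first route is genuinely different and tidier: rather than invoking Lemma~\ref{Lm2reduc} and then substituting, you form the finitely generated $\mathcal{R}$-module $\mathcal{N} = \sum_l \mathcal{R}\,S(v_l,\tau)$ directly (using Lemma~\ref{vofingen}), observe it is $q\tfrac{d}{dq}$-stable, and apply the ascending chain condition to the derivatives of $S(u,\tau)$ alone. This yields a \emph{homogeneous} ODE for $S(u,\tau)$ with coefficients in $\mathcal{R} = \M_{|G|}[G_2]$, so the subsequent Frobenius analysis can be done for every $u$ at once, without the induction on weight. The price is that you must know $\M_{|G|}[G_2]$ is closed under $q\tfrac{d}{dq}$ (the quasimodular Ramanujan-type identities on $\Gamma_0(|G|)$), a fact the DLM route avoids by keeping the raw $G_2$-terms explicit; conversely the DLM route must do the Virasoro bookkeeping you flag, which your module argument sidesteps entirely. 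One cosmetic point: your justification that $\mathcal{R}$ is Noetherian should simply cite Hilbert's basis theorem applied to the Noetherian ring $\M_{|G|}$ (Lemma~\ref{Mnoeth}) with one generator adjoined; the phrase ``finitely generated $\C$-algebra'' is not what you actually use.
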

We write $\oq$ here instead of $q$ because the elements of $\M_{|G|}$ can be expressed as series in integer powers of $\oq$, rather than $q$. For the proof of Theorem \ref{getdiff} see Section 6 of \cite{DLMorbifold}.

For $u^{(0)} \in V$ of minimal conformal weight the ODE satisfied by $S(u^{(0)}, \tau)$ is homogeneous because there are no nonzero vectors with strictly lower conformal weight. The theory of Frobenius-Fuchs tells us that $S(u^{(0)}, \tau)$ may be expressed in a certain form (\ref{canonform}) below. For arbitrary $u \in V$ the same conclusion cannot be drawn directly because of the presence of the inhomogeneous term. However an induction on $\DD_u$ shows that $S(u, \tau)$ does take the form (\ref{canonform}) for all $u \in V$. The form in question is
\begin{align} \label{canonform}
\begin{split}
S(u, \tau) &= \sum_{i=0}^p (\log q)^i S_i(u, \tau), \\
\text{where} \quad S_i(u, \tau) & = \sum_{j=1}^{b(i)} q^{\lambda_{ij}} S_{ij}(u, \tau), \\
\text{where} \quad S_{ij}(u, \tau) & = \sum_{n=0}^{\infty} C_{i, j, n}(u) q^{n/{|G|}},
\end{split}
\end{align}
where $\lambda_{i j_1} - \lambda_{i j_2} \notin \tfrac{1}{|G|}\Z$ for $1 \leq j_1 \neq j_2 \leq b(i)$. We call (\ref{canonform}) the \emph{Frobenius expansion} of $S(u, \tau)$.

A priori the parameters $p$, $b(i)$, and $\la_{i, j}$ in the Frobenius expansion of $S(u, \tau)$ depend on $u$. However if $\{u^{(i)}\}$ is a basis of $W$ then the conformal blocks $S(u^{(i)}, \tau)$ obviously span $\CC(g, h)$. This implies that $\CC(g, h)$ is finite dimensional, and that the Frobenius expansion of $S(u, \tau)$ may be written with fixed $p$, $b(i)$, $\la_{i, j_r}$ independent of $u$.


\section{Coefficients of Frobenius expansions} \label{coeffs}

In this section we study the coefficients $C_{p, j, 0} : V \rightarrow \C$ from (\ref{canonform}). First we recall the definition of the $g$-twisted Zhu algebra $\zhu_g(V)$.

\vspace{.3cm}
\begin{defn} \label{zhualgdefn}
For $u, v \in V$, $n \in \Z$ let
\begin{align*}
u \circ_n v &= \res_w w^n (1+w)^{\D_u + \eps_u} Y(u, w)v dw \\
&= 2\pi i \res_z e^{2\pi i (1+\eps_u) z} (e^{2\pi i z}-1)^n Y[u, z]v dz.
\end{align*}
Let $J_g \subseteq V$ be the span of all elements of the form
\begin{align*}
&u \quad \text{for $\mu(u) \neq 1$} \\
&u \circ_{n} v \quad \text{for $\mu(u) = \mu(v) = 1$ and $n \leq -2$}, \\
&u \circ_{n} v \quad \text{for $\mu(u) = \mu(v)^{-1} \neq 1$ and $n \leq -1$}, \\
\text{and} \quad &(L_{-1} + L_0)u \quad \text{for $\mu(u) = 1$}.
\end{align*}
The \emph{$g$-twisted Zhu algebra} is $\zhu_g(V) = V / J_g$ as a vector superspace, with the product induced by $\circ_{-1}$.
\end{defn}

We denote the projection of $u \in V$ to $\zhu_g(V)$ as $[u]$ or simply $u$. The following two theorems are proved in \cite{DK}; note that $u_{([0])}v$ in our notation is $(2\pi i)^{-1} [u, v]_{\hbar=1}$ in theirs.

\vspace{.3cm}
\begin{thm} \label{zhudefn} {\ }
\begin{itemize}
\item The product $\circ_{-1}$ is well-defined on $\zhu_g(V)$ and makes it into an associative superalgebra with unit $[\vac]$. We denote the product by $*$.

\item The $0^\text{th}$ Zhu product $\cdot_{([0])}\cdot$ is well-defined on $\zhu_g(V)$ and we have
\begin{align} \label{skewsymm}
u * v - p(u, v) v * u = 2\pi i u_{([0])}v \quad \text{for all $u, v \in \zhu_g(V)$}.
\end{align}

\item $[\om]$ is central in $\zhu_g(V)$.
\end{itemize}
\end{thm}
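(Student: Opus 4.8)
The plan is to prove Theorem \ref{zhudefn} by transporting everything through Zhu's exponential change of variable to the setting of De Sole–Kac \cite{DK}, where the corresponding statements are established, and then record the three assertions. Concretely, the operation $\circ_n$ is written in two equivalent forms in Definition \ref{zhualgdefn}: the ``multiplicative'' form $u \circ_n v = \res_w w^n (1+w)^{\D_u + \eps_u} Y(u,w) v \, dw$ and the ``additive'' form $2\pi i \res_z e^{2\pi i(1+\eps_u)z}(e^{2\pi i z}-1)^n Y[u,z]v \, dz$, the two being related by $w = e^{2\pi i z} - 1$ together with the defining relation $Y[u,z] = e^{2\pi i \D_u z} Y(u, e^{2\pi i z}-1)$ of Definition \ref{Zhustructure}. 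The first form is exactly the product appearing in \cite{DK} (after the dictionary $u_{([0])}v = (2\pi i)^{-1}[u,v]_{\hbar=1}$ noted in the excerpt), so I would first verify that $J_g$ as defined above coincides with the ideal used there, and then the first bullet — that $\circ_{-1}$ descends to a well-defined associative product $*$ on $\zhu_g(V) = V/J_g$ with unit $[\vac]$ — is immediate from their theorem.

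For the second bullet I would argue that the $0^{\text{th}}$ product $\cdot_{([0])}\cdot$ is well-defined on $\zhu_g(V)$ by showing it preserves $J_g$ on each side, using the Borcherds identity (\ref{borcherds}) to commute $u_{([0])}$ past the various generators of $J_g$: since $u_{([0])} = \res_z Y[u,z]\,dz$ is (up to the $2\pi i$) the bracket of the Zhu Lie algebra structure, it acts by derivations of the relevant products, so $u_{([0])}(J_g) \subseteq J_g$ and $(J_g)_{([0])}v \subseteq J_g$ follow from the module-type axioms. The skew-symmetry identity (\ref{skewsymm}), $u*v - p(u,v)v*u = 2\pi i\, u_{([0])}v$, I would obtain by writing $u*v = \res_w (1+w)^{\D_u+\eps_u}w^{-1}Y(u,w)v\,dw$, applying skew-symmetry of the vertex operators (or equivalently the $n=0$, $n=-1$ cases of the Borcherds identity together with the commutator formula (\ref{commutator})), and checking that the difference of the two orderings collapses modulo $J_g$ to the residue defining $u_{([0])}v$; again this is precisely the content of the corresponding statement in \cite{DK} once the change of variables is applied.

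For the third bullet, that $[\om]$ is central, I would use $(\om_{([0])}v = \res_z L[z]v\,dz$. One computes from the explicit formula for $L_{[n]}$ in Definition \ref{Zhustructure} that $\res_z L[z]v\,dz = L_{[-1]}v = 2\pi i(L_{-1}+L_0)v$; but $(L_{-1}+L_0)u \in J_g$ whenever $\mu(u)=1$, and for $\mu(u)\neq 1$ we have $u\in J_g$ outright, so in all cases $\om_{([0])}v \in J_g$, i.e.\ $\om_{([0])}v = 0$ in $\zhu_g(V)$. By (\ref{skewsymm}) this gives $\om * v = p(\om,v)\, v * \om = v*\om$ (recall $\om$ is even), which is exactly centrality of $[\om]$.

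The main obstacle is not any single computation but rather bookkeeping: one must carefully match our conventions — the conformal-weight indexing, the cosets $[\eps_u]$ and the choice of representative $\eps_u \le 0$, the placement of the $2\pi i$ factors, and the passage between $w$ and $z=\tfrac{1}{2\pi i}\log(1+w)$ — against those of \cite{DK}, and confirm that their hypotheses (in particular that $V$ is a VOSA with the grading and automorphism data as in Section \ref{definitions}) are met. Once that dictionary is fixed, all three assertions are restatements of results proved there, so the proof is essentially a verification that we are in their framework together with the short derivation of centrality of $[\om]$ via the explicit form of $L_{[-1]}$.
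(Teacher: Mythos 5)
Your proposal matches the paper's treatment: the paper offers no independent proof of Theorem \ref{zhudefn} but simply cites \cite{DK} (with the dictionary $u_{([0])}v = (2\pi i)^{-1}[u,v]_{\hbar=1}$), which is exactly your strategy, and your supplementary derivation of the centrality of $[\om]$ from $\om_{([0])}v \propto (L_{-1}+L_0)v \in J_g$ together with (\ref{skewsymm}) is sound. The only quibble is a harmless normalization: since $L[z] = Y[\tilde\om,z]$ one has $\res_z L[z]v\,dz = \tilde\om_{([0])}v = (2\pi i)^2\,\om_{([0])}v$, so your identification of the residue with $\om_{([0])}v$ is off by $(2\pi i)^2$, which does not affect the conclusion.
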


\vspace{.3cm}
\begin{thm} \label{functorial} {\ }
\begin{itemize}
\item There is a restriction functor $\Omega$ from the category of positive energy $g$-twisted $V$-modules to the category of $\zhu_g(V)$-modules. It sends $M$ to $M_0$ with the action $[u] * x = u^M_0 x$ for $u \in V$ and $x \in M_0$.

\item There is an induction functor $L$ going in the other direction, and we have $\Omega(L(N)) \cong N$ for any $\zhu_g(V)$-module $N$.

\item $\Omega$ and $L$ are inverse bijections between the sets of irreducible modules in each category.
\end{itemize}
\end{thm}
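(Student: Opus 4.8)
The plan is to establish the three bulleted assertions of Theorem~\ref{functorial} one at a time, following the standard pattern for Zhu-algebra/module correspondences but carefully tracking the $g$-twisting and the superspace grading. For the restriction functor $\Omega$: given a positive energy $g$-twisted $V$-module $M = \oplus_{j} M_j$, one sets $\Omega(M) = M_0$ and defines $[u]*x = u^M_0 x$ for $u \in V$, $x \in M_0$. First I would check this is well-defined, i.e.\ that $J_g$ acts by zero on $M_0$ via the assignment $u \mapsto u^M_0$. There are four families of generators of $J_g$. For $u$ with $\mu(u) \neq 1$: the modes of $u$ acting on $M$ are indexed by $n \in [\eps_u]$ with $\eps_u \notin \Z$, so $u^M_0$ simply does not occur (the $z^{-0-\D_u}$ coefficient is not an integer-shift mode), hence the relevant operator is $0$. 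For the $(L_{-1}+L_0)u$ relation with $\mu(u)=1$: since $[L_0, w_n] = -n w_n$ on any module and $Y^M(L_{-1}u,z) = \partial_z Y^M(u,z)$, a short computation with the mode indexing gives $((L_{-1}+L_0)u)^M_0 = 0$ on $M_0$. For the two $u \circ_n v$ families: using the defining integral $u\circ_n v = \res_w w^n(1+w)^{\D_u+\eps_u} Y(u,w)v\,dw$ together with the positive-energy grading condition $u^M_n M_0 \subseteq M_{-n}$ (so only $n=0$ survives when acting on $M_0$ and landing back in $M_0$), one expands the binomial $(1+w)^{\D_u+\eps_u}$ and collects the mode of $u\circ_n v$ acting on $x\in M_0$; the grading forces all but finitely many terms to vanish and the $n \leq -2$ (resp.\ $n\leq -1$) condition makes the surviving combination of products $u^M_m v^M_k$ land outside $M_0$ or telescope to zero. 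This is essentially the twisted analog of the computation in \cite{DLMzhualgebra}, \cite{DK}, and I would cite \cite{DK} for the identity $u\circ_{-1}v \equiv u*v$ and skew-symmetry (\ref{skewsymm}) rather than reprove them; associativity of $*$ on $\zhu_g(V)$ and unitality of $[\vac]$ come from Theorem~\ref{zhudefn}. The $\Z/2\Z$-grading is respected because $Y^M(u,z)$ has parity $p(u)$, so $u^M_0 \in (\en M_0)_{p(u)}$, and $J_g$ is a graded subspace.

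For the induction functor $L$ and the isomorphism $\Omega(L(N)) \cong N$: here I would follow the construction in \cite{DK} of a ``generalized Verma module'' attached to a $\zhu_g(V)$-module $N$. One forms a suitable quotient of a free module generated by $N$ under the modes $u^M_n$, imposes the twisted Borcherds identity of Definition~\ref{twisteddefn} and the bottom-degree identifications coming from the $\zhu_g(V)$-action, and extracts a maximal positive-energy quotient whose degree-zero piece is exactly $N$. The nontrivial content is that the bottom piece does not collapse — that the natural map $N \to L(N)_0$ is an isomorphism — which again is exactly what is proved in \cite{DK}; I would invoke that result. Functoriality of both $\Omega$ and $L$ (on morphisms, including odd ones, since our modules and module maps are $\Z/2\Z$-graded) is then routine: a $V$-module map restricts to a $\zhu_g(V)$-module map on degree-zero parts, and conversely a $\zhu_g(V)$-module map extends over the universal construction.

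For the third bullet — that $\Omega$ and $L$ restrict to inverse bijections on \emph{irreducible} objects — the argument is: if $N$ is an irreducible $\zhu_g(V)$-module, then $L(N)$ has a unique maximal proper (graded) submodule intersecting $L(N)_0$ trivially, and quotienting gives an irreducible positive-energy $g$-twisted $V$-module whose bottom is still $N$; conversely if $M$ is irreducible then $M_0$ is irreducible over $\zhu_g(V)$ (any proper submodule of $M_0$ would generate a proper submodule of $M$), and $L(M_0) \twoheadrightarrow M$ with the degree-zero map an isomorphism forces $M \cong$ the irreducible quotient $L(M_0)/(\text{max.\ submodule})$. One must check irreducibility in the \emph{graded} (super) sense matches irreducibility as a $V$-module; this is where the convention stated in Section~\ref{definitions} — subspaces are always $\Z/2\Z$-graded — is used, and it makes the matching immediate. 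Again this is Theorem~4.? of \cite{DK} in the ungraded-over-$\hbar$ formulation, and the cleanest route is to cite \cite{DK} directly, remarking that the super refinement is built into our conventions.

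\textbf{Main obstacle.} The genuine work is the well-definedness of $\Omega$, namely checking that each of the four generating families of $J_g$ — in particular the two twisted $\circ_n$ families with the asymmetric bounds $n\leq -2$ versus $n\leq -1$ — acts by zero on $M_0$; this requires a careful bookkeeping of the mode indexing $n \in [\eps_u]$ against the positive-energy grading, and it is the step where the twisting genuinely changes the classical argument. The construction of $L(N)$ and the non-collapsing of its bottom piece is also technically substantial, but since it is carried out in full in \cite{DK} I would not reproduce it; the honest statement is that Theorem~\ref{functorial} is quoted from \cite{DK}, with the present section recording only those verifications needed downstream (chiefly that the leading Frobenius coefficients, studied below, descend to $\zhu_g(V)$).
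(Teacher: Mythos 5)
Your proposal is correct and matches the paper's treatment: the paper gives no proof of this theorem, stating only that it (together with Theorem \ref{zhudefn}) is proved in \cite{DK}, which is exactly the conclusion you reach after sketching the standard verifications (vanishing of $J_g$ under $u \mapsto u^M_0$, the generalized Verma construction of $L$, and the bijection on irreducibles). Your sketch of those verifications is sound, so nothing further is needed.
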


The automorphism $h$ of $V$ descends to an automorphism of $\zhu_g(V)$, which we also denote $h$.

\vspace{.3cm}
\begin{prop} \label{cpistrace}
Let $S \in \CC(g, h)$ with Frobenius expansion (\ref{canonform}). Fix $j \in \{1, 2, \ldots b(p)\}$, and let $f = C_{p, j, 0}$. We have
\begin{itemize}
\item $f(u) = 0$ for all $u \in J_g(V)$, so $f$ descends to a map $f : \zhu_g(V) \rightarrow \C$.

\item $f(u * v) =  \delta_{\la(u)\la(v), 1} p(u, v) \la(u)^{-1} f(v * u)$ for all $u, v \in \zhu_g(V)$.
\end{itemize}
\end{prop}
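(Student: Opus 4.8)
The plan is to extract the two claims directly from the defining properties of conformal blocks, working with the Frobenius expansion coefficient $f = C_{p,j,0}$. The first bullet (that $f$ kills $J_g$) will come from axioms {\bf CB3} and {\bf CB4}; the second bullet (the $h$-twisted supersymmetry) will come from the generator $\ide_1(u,v) = u_{([0])}v$ of $\OO(g,h)$ together with the skew-symmetry relation (\ref{skewsymm}) of Theorem \ref{zhudefn}. Throughout I would exploit the fact that multiplying a conformal block by the modular form $q^{c}$-type factors shifts the exponents $\la_{ij}$ in a controlled way, so that reading off the coefficient of $(\log q)^p q^{\la_{pj}}$ in an identity $S(x,\tau) = 0$ (resp.\ in axiom {\bf CB4}) yields a linear relation on $f$.

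For the first bullet, recall $J_g$ is spanned by four types of elements: $u$ with $\mu(u)\neq 1$; $u\circ_n v$ with $\mu(u)=\mu(v)=1$ and $n\leq -2$; $u\circ_n v$ with $\mu(u)=\mu(v)^{-1}\neq 1$ and $n\leq -1$; and $(L_{-1}+L_0)u$ with $\mu(u)=1$. For $u$ with $\mu(u)\neq 1$, by {\bf CB3} we have $S(u,\tau)=0$ identically (since such $u$ lies in $\OO(g,h)$), so every Frobenius coefficient of $S(u,\tau)$ vanishes, in particular $f(u)=0$. For $(L_{-1}+L_0)u$: note $L_{[-1]} = 2\pi i(L_{-1}+L_0)$, and $\ide_1(L_{[-1]}u', v') = (L_{[-1]}u')_{([0])}v'$; more directly, using the ``translation'' identity for the bracket $u_{([0])}v$ one sees that $(L_{-1}+L_0)u$, suitably paired, is a total derivative, hence killed by $\res_z$. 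I would instead argue as in \cite{DLMorbifold}: apply {\bf CB4} and {\bf CB3}, together with the expressions for $\ide_2$ and $\ide_3^{g,h}$, and compare the coefficient of the top power of $\log q$ and the relevant power of $q$. Concretely, $\ide_2(u,v) \in \OO(g,h)$ expands as $u_{([-2])}v + \sum_{k\geq 2}(2k-1)G_{2k}(q)u_{([2k-2])}v$, and $\ide_3^{g,h}(u,v)$ as $-u_{([-1])}v + \sum_k P_k^{\mu,\la}(q)u_{([k])}v$; extracting the leading Frobenius coefficient from $S(\ide_2(u,v),\tau)=0$ and $S(\ide_3^{g,h}(u,v),\tau)=0$, and using that $q\frac{d}{dq}$ applied to the leading term $q^{\la_{pj}}$ multiplies it by $\la_{pj}$ while the inhomogeneous lower-$\DD$ terms only affect lower-order coefficients by the usual Frobenius-induction bookkeeping, gives that $f$ vanishes on $u\circ_n v$ for the stated ranges of $n$. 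Translating $\circ_n$ between the $Y$- and $Y[\cdot]$-pictures via the formula in Definition \ref{zhualgdefn} is the main bookkeeping task here. Hence $f$ descends to $\zhu_g(V)=V/J_g$.

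For the second bullet, let $u,v\in V$. First suppose $\la(u)\la(v)\neq 1$. Then $u_{([0])}v$ has $h$-eigenvalue $\la(u)\la(v)\neq 1$, so (thinking of the conformal block as $\M_{|G|}$-linear and vanishing on vectors of nontrivial $h$-eigenvalue only when $\mu\neq 1$—but more to the point) I would argue that $u*v$ itself, as an element of $\zhu_g(V)$, is expressible so that $f(u*v)=0$: indeed $f$ factors through the quotient, and on $\zhu_g(V)$ the $h$-eigenspace decomposition is respected, while $f$ arising as a Frobenius coefficient of $S\in\CC(g,h)$ can be shown (by the same leading-coefficient argument applied to the generator ``$u$ for $(\mu(u),\la(u))\neq(1,1)$'' together with a descent argument for the $\mu=1$, $\la\neq 1$ case using a power of $h$) to be supported on the $\la=1$ part — this is exactly where the Kronecker $\delta_{\la(u)\la(v),1}$ enters. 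Now suppose $\la(u)\la(v)=1$, i.e.\ $\la(v)=\la(u)^{-1}$. Apply $f$ to the skew-symmetry relation (\ref{skewsymm}): $f(u*v) - p(u,v)f(v*u) = 2\pi i\, f(u_{([0])}v)$. The element $u_{([0])}v = \res_z Y[u,z]v\,dz$ is precisely $\ide_1(u,v)$ when $(\mu(u),\la(u))=(1,1)$, which lies in $\OO(g,h)$, so $f(u_{([0])}v)=0$ in that case by {\bf CB3}; in the remaining case $\mu(u)\neq 1$ one has $u\in J_g$ already so $u_{([0])}v\in J_g$ and again $f$ kills it. Thus $f(u*v) = p(u,v)f(v*u)$ when $\la(u)=1$; the general $\la(u)^{-1}$ factor then appears by applying the $h$-equivariance of the whole setup — precisely, $S\in\CC(g,h)$ means $f$ transforms under $h$ by a scalar on each eigenspace, and rescaling $u$ within its $h$-eigenspace produces the $\la(u)^{-1}$ twist; this is the step requiring care about how $h$ acts on the module $N$ underlying $f$.

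The main obstacle I anticipate is the bookkeeping in the first bullet: correctly tracking, through the Frobenius-Fuchs induction on $\DD_u$, that the inhomogeneous lower-order terms $S(x_{jk},\tau)$ in the ODE of Theorem \ref{getdiff} do not contaminate the top coefficient $C_{p,j,0}$, so that the naive ``leading-term'' reading of {\bf CB3} and {\bf CB4} is legitimate. Equivalently, one must verify that the operator $q\frac{d}{dq}$ and the modular-form multiplications act on the leading Frobenius datum $(p, \la_{pj}, C_{p,j,0})$ in the expected triangular fashion. The secondary subtlety is the emergence of the exact factor $\la(u)^{-1}$ (rather than, say, $\la(v)$ or $1$) in the second bullet, which forces one to pin down the normalization of how $h$ acts on $f$ versus how it acts on the underlying $\zhu_g(V)$-module — this is the bridge to the ``$h$-supersymmetric function'' language developed in Section \ref{symmfunc}.
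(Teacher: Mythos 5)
Your first bullet is essentially on the right track, but you are making it harder than it is: no ODE, no Frobenius--Fuchs induction, and no ``triangularity'' worry is needed. Since $S$ vanishes \emph{identically} on $\OO(g,h)$ and is $\M_{|G|}$-linear, every coefficient of the expansion (\ref{canonform}) of $S(x,\tau)$ vanishes for $x\in\OO(g,h)$; reading off the coefficient of $(\log q)^p q^{\la_{p,j}}$ shows directly that $f=C_{p,j,0}$ annihilates the $q^0$ coefficient (in the $\M_{|G|}$-part) of every element of $\OO(g,h)$. The whole content of the first bullet is then the residue computation you defer: $[q^0]:\ide_2(u,v)=-2\pi i\,u\circ_{-2}v-2\zeta(2)\,u_{([0])}v$ and $[q^0]:\ide_3^{g,h}(u,v)=\frac{2\pi i\delta}{1-\la(u)}u_{([0])}v-u\circ_{-1}v$, plus the observation that $2\pi i(L_{-1}+L_0)u=\tilde\om_{([0])}u=\ide_1(\tilde\om,u)$. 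Also note that {\bf CB4} plays no role here.

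The genuine gap is in the second bullet, in the case $\la(u)\la(v)=1$ with $\la(u)\neq 1$ (and $\mu(u)=\mu(v)=1$, which is automatic for elements of $\zhu_g(V)$). Your dichotomy ``either $(\mu(u),\la(u))=(1,1)$, so $\ide_1(u,v)\in\OO(g,h)$ kills $f(u_{([0])}v)$, or $\mu(u)\neq 1$'' omits exactly this case, and in it $f(u_{([0])}v)$ is \emph{not} zero: the vector $u_{([0])}v$ is $g$- and $h$-invariant and is not touched by {\bf CB3}. Your proposed fix --- that ``rescaling $u$ within its $h$-eigenspace produces the $\la(u)^{-1}$ twist'' --- is vacuous, since rescaling $u$ by a constant multiplies $f(u*v)$ and $f(v*u)$ by the same constant and cannot produce a relative factor. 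The factor $\la(u)^{-1}$ actually comes from the constant term $\frac{2\pi i\delta}{1-\la(u)}$ of $P_0^{\mu(u),\la(u)}$ (with $\delta=1$ here, since $[\eps_u]=\Z$ and $\la(u)\neq1$): the identity $f([q^0]:\ide_3^{g,h}(u,v))=0$ gives $f(u*v)=\frac{2\pi i}{1-\la(u)}f(u_{([0])}v)$, and substituting $2\pi i\,u_{([0])}v=u*v-p(u,v)\,v*u$ from (\ref{skewsymm}) yields $(1-\la(u))f(u*v)=f(u*v)-p(u,v)f(v*u)$, i.e.\ $f(u*v)=p(u,v)\la(u)^{-1}f(v*u)$. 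Without this step the key case of the $h$-supersymmetry relation is unproved.
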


\begin{proof}
By definition $S(\cdot, \tau)$ annihilates $\OO(g, h)$. Therefore $f$ annihilates the $q^0$ coefficient of any element of $\OO(g, h)$. If $u \in V$ with $\mu(u) \neq 1$ then $u \in \OO(g, h)$, so $f(u) = 0$. If $\mu(u) = 1$ then $2\pi i (L_{-1} + L_0)u = \tilde{\omega}_{([0])}u \in \OO(g, h)$ is annihilated by $f$. Now
\begin{align*}
[q^0] : \ide_1(u, v) &= \ide_1(u, v) = u_{([0])}v, \\
[q^0]: \ide_2(u, v)
&= \res_z \left([q^0] : \partial_z P(z, q) - G_2(q)\right) Y[u, z]v dz \\
&= \res_z \left[ 2\pi i \partial_z \frac{e^{2\pi i z}}{e^{2\pi i z}-1} - 2\zeta(2) \right] Y[u, z]v dz \\
&= - (2\pi i)^2 \res_z \frac{e^{2\pi i z}}{(e^{2\pi i z}-1)^2} Y[u, z]v dz - 2\zeta(2) u_{([0])}v \\
&= - 2\pi i u \circ_{-2} v - 2 \zeta(2) u_{([0])}v, \\
\text{and} \quad [q^0]: \ide_3(u, v)
&= \res_z \left( [q^0] : P^{\mu(u), \la(u)}(z, q) \right) Y[u, z]v dz \\
&= \res_z \left[\frac{2\pi i \delta}{1-\la(u)} - 2\pi i \frac{e^{2\pi i(1+\eps_u)z}}{e^{2\pi i z}-1} \right] Y[u, z]v dz \\
&= \frac{2\pi i \delta}{1-\la(u)} u_{([0])}v - u \circ_{-1} v.
\end{align*}
Hence $f$ annihilates $J_g$, and descends to a function on $\zhu_g(V)$.

Now for the second part. Let $\mu(u) = \mu(v) = 1$. If $\la(u) \la(v) \neq 1$ then $u * v$, $v * u$ and $u_{([0])}v$ all lie in $\OO(g, h)$ and so are annihilated by $f$. If $\la(u) = \la(v) = 1$ then $f$ annihilates $u_{([0])}v$, hence $f(u * v) = p(u, v) f(v * u)$. If $\la(u) \la(v) = 1$ with $\la(u) \neq 1$, then $f$ annihilates
\[
\frac{2\pi i}{1-\la(u)} u_{([0])}v - u * v.
\]
Combining this with (\ref{skewsymm}) shows that
\[
f(u * v) = p(u, v) \la(u)^{-1} f(v * u),
\]
so we are done.
\end{proof}


\section{$h$-supersymmetric functions} \label{symmfunc}

Let $A$ be a finite dimensional unital associative superalgebra carrying an automorphism $h$ of finite order. Let $\la(a)$ denote the $h$-eigenvalue of an eigenvector $a \in A$. We consider linear functions $f : A \rightarrow \C$ satisfying
\begin{align} \label{nearsusy}
f(a b) = \delta_{\la(a)\la(b), 1} p(a, b) \la(a)^{-1} f(b a)
\end{align}
for all $a, b \in A$ eigenvectors of $h$. We refer to these functions as \emph{$h$-supersymmetric} functions on $A$, and write $\mathcal{F}_h(A)$ for the space of all such functions. We also write $\mathcal{F}(A)$ for $\mathcal{F}_1(A)$ and refer to these as \emph{supersymmetric} functions.

Let $A$ be semisimple now, and let $A = \oplus_{i \in I} A_i$ be its decomposition into simple components. The automorphism $h$ permutes the $A_i$. The following fact is proved in Lemma 10.7 of \cite{DLMorbifold} (for algebras, but the proof carries over naturally to the $\Z/2\Z$-graded setting).
\vspace{.3cm}
\begin{lemma}
$\mathcal{F}_h(A) = \oplus_{i \in J} \mathcal{F}_h(A_i)$ where the direct sum is over the subset $J \subset I$ of $h$-invariant simple components.
\end{lemma}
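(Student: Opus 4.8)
The plan is to reduce the claim to the two standard facts about semisimple superalgebras: that an $h$-supersymmetric function is determined by its restriction to each simple component, and that it must vanish on components that $h$ does not preserve. Write $A = \oplus_{i \in I} A_i$ with central idempotents $e_i$. The key preliminary observation is that each $e_i$ is fixed by $h$ \emph{as an element of the set} $\{e_i : i \in I\}$, i.e., $h$ permutes the $e_i$; moreover each $e_i$ is an even idempotent with $\la(e_i)$ a root of unity, and in fact $h(e_i) = e_{\pi(i)}$ for the induced permutation $\pi$ of $I$.

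First I would show $\mathcal{F}_h(A) \supseteq \oplus_{i \in J} \mathcal{F}_h(A_i)$: given $f_i \in \mathcal{F}_h(A_i)$ for each $h$-invariant component, extend by zero on the other components and check that the resulting $f$ satisfies (\ref{nearsusy}). For eigenvectors $a, b$ of $h$ lying in a single component $A_i$ this is immediate from $f_i \in \mathcal{F}_h(A_i)$; for $a \in A_i$, $b \in A_j$ with $i \neq j$ one has $ab = ba = 0$, so both sides vanish. A general eigenvector need not be supported on a single component, but one can decompose $a = \sum_i a e_i$ and $b = \sum_j b e_j$; since the $e_i$ are central and $h$-permuted, $a e_i$ and $b e_j$ are again $h$-eigenvectors, with the same eigenvalues as $a$ and $b$ respectively when the summand is nonzero, and $ae_i \cdot be_j$ vanishes unless $i = j$. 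Bilinearity then reduces the identity to the single-component case already handled.

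Next I would prove the reverse inclusion $\mathcal{F}_h(A) \subseteq \oplus_{i \in J} \mathcal{F}_h(A_i)$. Given $f \in \mathcal{F}_h(A)$, its restriction $f_i = f|_{A_i}$ clearly lies in $\mathcal{F}_h(A_i)$, and $f = \sum_i f_i$ under the identification $A = \oplus A_i$ since $f(a) = \sum_i f(a e_i)$. It remains to show $f_i = 0$ when $A_i$ is not $h$-invariant, say $h(e_i) = e_j$ with $j \neq i$. Here I would use that $e_j = e_j e_i \cdot \text{(something)}$ is orthogonal to $e_i$: take any $a \in A_i$, an $h$-eigenvector with eigenvalue $\la(a)$, and write $a = a e_i$. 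Then, because $h(e_j) \neq e_j$ forces $\la$-eigenvectors supported near $e_i$ to pair with eigenvectors supported near $e_j$, one finds $f(a) = f(a e_i) = f(e_i a)$; applying (\ref{nearsusy}) with a suitable factorization of $a$ through $e_i$ and $e_j$ and using $e_i e_j = 0$ yields $f(a) = 0$. Concretely: write $a = e_i a e_i$; since $h(e_i) = e_j$, the eigenvalue of $e_i$ under $h$ cannot be $1$ (else $e_i$ would be fixed), so $\la(e_i) \neq 1$; then $f(e_i \cdot a e_i) = \delta_{\la(e_i)\la(ae_i),1}(\cdots) = 0$ unless $\la(e_i)\la(ae_i) = 1$. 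A short eigenvalue bookkeeping — decomposing $e_i$ into its homogeneous $h$-eigencomponents and noting each is orthogonal to the corresponding component of $e_j$ — rules out the surviving case and gives $f_i = 0$.

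The main obstacle I anticipate is the bookkeeping in this last step: the central idempotent $e_i$ need not be an $h$-eigenvector (only the \emph{set} of $e_i$ is permuted), so one must first decompose $e_i = \sum_\zeta e_i^{(\zeta)}$ into $h$-eigencomponents and argue componentwise, using orthogonality relations $e_i^{(\zeta)} e_j^{(\zeta')} = 0$ inherited from $e_i e_j = 0$ together with the permutation action. This is exactly the point where the cited Lemma 10.7 of \cite{DLMorbifold} does the work in the ungraded case, and the remark in the excerpt — that ``the proof carries over naturally to the $\Z/2\Z$-graded setting'' — is accurate because parities never obstruct these idempotent computations: all central idempotents are even, so the factor $p(a,b)$ plays no role in the vanishing argument and only appears in the single-component identity, which is a purely formal consequence of (\ref{nearsusy}). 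Hence I would keep the proof short: establish the permutation action of $h$ on $\{e_i\}$, prove the two inclusions as above, and point to \cite{DLMorbifold} Lemma 10.7 for the detailed idempotent bookkeeping.
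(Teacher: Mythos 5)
Your route is the same as the paper's: the paper offers no argument of its own for this lemma, simply citing Lemma 10.7 of \cite{DLMorbifold} with the remark that the proof carries over to the $\Z/2\Z$-graded setting, and your observation that every central idempotent is even --- so the sign $p(a,b)$ never enters the idempotent manipulations --- is exactly the justification for that remark. Two assertions in your sketch are false as written, however, and should be repaired rather than hidden behind the citation. If $h(e_i)=e_j\neq e_i$ then $e_i$ is not an $h$-eigenvector at all, so the phrase ``the eigenvalue of $e_i$ under $h$ cannot be $1$'' is meaningless; likewise $ae_i$ is an $h$-eigenvector only when $i\in J$ (which is all the forward inclusion actually needs, but not what you assert). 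The honest form of the vanishing step, say when $h$ has order $2$ and swaps $A_i$ with $A_j$, is to apply (\ref{nearsusy}) to the eigenvector pairs $(a+h(a),\,e_i-e_j)$ and $(a-h(a),\,e_i-e_j)$ for $a\in A_i$: both products equal $a-h(a)$, resp.\ $a+h(a)$, in either order because the idempotents are central and even; the Kronecker delta in the first case gives $f(a)=f(h(a))$, while the factor $\la(a)^{-1}=-1$ in the second gives $f(a)=-f(h(a))$, whence $f(a)=0$. For a general orbit one runs the same computation against the eigenvectors $\sum_l\zeta^{-l}h^l(e_i)$, $\zeta$ ranging over roots of unity of order dividing the orbit length; that is precisely the ``idempotent bookkeeping'' you defer to \cite{DLMorbifold}, and it does go through, so the proposal is correct once the two mis-statements are excised.
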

For the rest of this section let $A$ be a simple superalgebra, i.e., having no proper nonzero $\Z/2\Z$-graded ideals. Note that a simple superalgebra need not be simple as an algebra. We summarize some well known (see, e.g., \cite{Kacsuper}) results about simple superalgebras in the following theorem.
\vspace{.3cm}
\begin{thm}\label{superstructure}
Let $A$ be a finite dimensional simple superalgebra over $\C$. Then
\begin{itemize}
\item (Superalgebra Wedderburn theorem) $A$ is isomorphic either to $\en(\C^{m|k})$ or else to $Q_n = \en(\C^n)[\suv] / (\suv^2 = 1)$ where $\suv$ is an odd indeterminate. Henceforth we refer to these cases as {\sf Type I} and {\sf Type II}, respectively.

\item $A$ has a unique irreducible module up to isomorphism. It is $\C^{m|k}$ in the {\sf Type I} case, and $\C^{n|n} = \C^n + \C^n \suv$ in the {\sf Type II} case, with the obvious action in each case.

\item (Superalgebra Schur lemma) Let $N$ denote the unique up to isomorphism irreducible $A$-module. Then
\begin{align*}
\en_A(N) = \left\{\begin{array}{ll}
\C 1 & \text{if $A$ is {\sf Type I}}, \\
\C 1 + \C \suv & \text{if $A$ is {\sf Type II}}. \\
\end{array}\right.
\end{align*}
\end{itemize}
\end{thm}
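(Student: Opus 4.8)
The plan is to bootstrap from the classical Wedderburn and Skolem--Noether theorems by exploiting the parity involution $\sigma = \sigma_A$, which is an ordinary (ungraded) algebra automorphism of $A$ with $\sigma^2 = 1$ and with $A_{\ov 0}$, $A_{\ov 1}$ as its $\pm 1$-eigenspaces. First I would settle the {\sf Type I}/{\sf Type II} dichotomy by viewing $A$ as an ordinary algebra. If $A$ is ordinarily simple we are in the {\sf Type I} case. Otherwise choose a minimal nonzero ordinary two-sided ideal $I$; then $\sigma(I)$ is another such ideal, and it is distinct from $I$ (else $I$ would be a proper nonzero graded ideal, impossible). The intersection $I \cap \sigma(I)$ and the sum $I + \sigma(I)$ are $\sigma$-stable, hence $\Z/2\Z$-graded, so graded simplicity of $A$ forces $I \cap \sigma(I) = 0$ and $I + \sigma(I) = A$. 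Thus $A = I \oplus \sigma(I)$ as ordinary algebras, which will be the {\sf Type II} case.

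In {\sf Type I}, ordinary Wedderburn gives $A \cong \en(W)$ for a finite-dimensional space $W$, and Skolem--Noether makes $\sigma$ inner: $\sigma = \mathrm{Ad}(g)$. From $\sigma^2 = 1$, the element $g^2$ is central, so after rescaling $g^2 = 1$ and $W = W_+ \oplus W_-$ splits into the $\pm 1$-eigenspaces of $g$, of dimensions $m$ and $k$, say. A direct check gives $A_{\ov 0} = \{a : ga = ag\} = \en(W_+) \oplus \en(W_-)$ and $A_{\ov 1} = \{a : ga = -ag\} = \Hom(W_+, W_-) \oplus \Hom(W_-, W_+)$, i.e.\ $A \cong \en(\C^{m|k})$. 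In {\sf Type II}, the factor $I$ is ordinarily simple, hence $I \cong \en(\C^n)$; let $e$ be its unit, so that $e\sigma(e) = \sigma(e)e = 0$ and $e + \sigma(e) = 1$. I would then set $\suv = e - \sigma(e)$ and verify by direct computation that $\suv$ is odd, that $\suv^2 = 1$, and that $\suv$ is central in the ordinary algebra $A$ (it is $+1$ on $I$ and $-1$ on $\sigma(I)$). The map $a \mapsto a + \sigma(a)$ identifies $I$ with $A_{\ov 0}$ as algebras, and a dimension count ($\dim A = 2n^2$, $\dim A_{\ov 0} = n^2$) gives $A_{\ov 1} = A_{\ov 0}\suv$; hence $A = A_{\ov 0} \oplus A_{\ov 0}\suv \cong \en(\C^n)[\suv]/(\suv^2 - 1) = Q_n$.

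For the statements about modules: in {\sf Type I}, $A$ is an ordinary matrix algebra with unique ordinary simple module $\C^{m+k}$, which carries the evident grading to become $N = \C^{m|k}$, and the classical Schur lemma gives $\en_A(N) = \C 1$. In {\sf Type II}, the central idempotents $\tfrac12(1 \pm \suv)$ exhibit $A \cong \en(\C^n) \oplus \en(\C^n)$ as an ordinary algebra; its two ordinary simple modules $N_\pm \cong \C^n$ are interchanged by $\sigma$, so the unique simple \emph{super}module is $N = N_+ \oplus N_- = \C^n + \C^n \suv$, on which $\suv$ acts as an odd $A$-module endomorphism, whence $\en_A(N) = \C 1 + \C \suv$. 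In both cases uniqueness up to isomorphism uses that in the graded category an intertwiner may be taken odd, so a module and its parity shift are equivalent.

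The step I expect to be the main obstacle is the {\sf Type II} analysis --- pinning down that $A_{\ov 1}$ is free of rank one over $A_{\ov 0}$, generated by an odd square root of unity, rather than merely that $A$ is some twisted extension of $A_{\ov 0}$. Producing the element $\suv$ explicitly from the block idempotent $e$ of the ordinary ideal $I$ reduces this to a few lines, after which everything else is routine bookkeeping with the classical structure theory.
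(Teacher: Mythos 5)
Your argument is correct, and the main point where it diverges from the paper is also where it adds value: the paper obtains the {\sf Type I}/{\sf Type II} dichotomy by citing Lemma 3 of Wall's paper on graded Brauer groups, whereas you prove it from scratch. Your mechanism --- take a minimal ordinary two-sided ideal $I$; either $I=A$ (ordinarily simple, hence {\sf Type I} via Wedderburn and Skolem--Noether exactly as in the paper) or $\sigma_A(I)\neq I$ and graded simplicity forces $A=I\oplus\sigma_A(I)$, after which $\suv=e-\sigma_A(e)$ (with $e$ the unit of $I$) is an explicit odd central involution generating $A_{\ov{1}}$ over $A_{\ov{0}}$ --- is precisely the content of Wall's lemma, so your proof is self-contained where the paper outsources the key structural input. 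Two small points are worth making explicit: unitality of $A$ is what guarantees that $e$ exists and that $I$, being a minimal ideal killed by $\sigma_A(I)$ on both sides, is a unital simple algebra and hence a matrix algebra (you never need to invoke semisimplicity of $A$ separately); and in {\sf Type I} the uniqueness of the irreducible supermodule deserves a sentence --- the grading operator of any irreducible supermodule differs from $\sigma_N\in A$ by an ungraded $A$-endomorphism squaring to $1$, whose eigenspace decomposition produces graded submodules, forcing a single copy of $\C^{m+k}$ graded as $\C^{m|k}$ or its parity shift (the paper is equally terse here). Your observation that in {\sf Type II} the ungraded endomorphism algebra of $N$ is the two-dimensional algebra $\C\times\C$, which already contains $1$ and $\suv$, in fact yields the super Schur lemma a little more directly than the paper's even/odd case analysis.
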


\begin{proof}
Lemma 3 of \cite{Wall} states that $A$ is either simple as an ordinary algebra or else it is isomorphic to $Q_n$ for some $n \geq 1$. In the first case $A = \en(N)$ for some finite dimensional vector space $N$ which is in turn the unique irreducible $A$-module. Consider the involution $\sigma_A$ on $A$. There exists an invertible element (indeed an involution) $b \in A$ such that $\sigma(a) = b^{-1} a b$ for all $a \in A$. Now $N$ decomposes into eigenspaces for $b$ with eigenvalues $1$ and $-1$. We make $N$ into a superspace by declaring these eigenspaces even and odd respectively. Then $b$ is just $\sigma_N$. Now $A \cong \en(\C^{m|k})$ as a superalgebra and $N$ is its unique irreducible module. In the second case any irreducible $A$-module $N$ is in particular an $\en(\C^n)$-module and therefore a direct sum of copies of $\C^n$. Since $\suv$ maps an even copy of $\C^n$ to an odd one, and $\suv^2 = 1$, we deduce that $A$ has the unique irreducible $\Z/2\Z$-graded module $N = \C^n + \C^n \suv$ (observe that without the $\Z/2\Z$-graded condition $N$ would not be unique). This completes the proofs of the first two items.

For the third item: the {\sf Type I} case follows from the usual Schur lemma by forgetting the $\Z/2\Z$-grading. If $A$ is of {\sf Type II} let $\phi \in \en_A(N)_0$, then $\phi|_{N_0} \in \C 1$ by the usual Schur lemma and since $\phi$ commutes with $\suv$ we have $\phi \in \C 1$. If $\phi \in \en_A(N)_1$ then clearly $\phi \suv \in \en_A(N)_0$ and so $\phi \in \C \suv$.
\end{proof}
Supersymmetric functions on simple superalgebras were first considered in \cite{KacAlgebraic} and are characterized as follows.
\vspace{.3cm}
\begin{lemma} \label{whataresymm}
Let $A$ be a finite dimensional simple superalgebra, and $N$ its unique up to isomorphism irreducible module. The space $\mathcal{F}(A)$ of supersymmetric functions on $A$ is one dimensional and is spanned by the function
\begin{align*}
\begin{array}{ll}
a \mapsto \str_N(a) & \text{if $A$ is of {\sf Type I}}, \\
a \mapsto \tr_N(a \suv) & \text{if $A$ is of {\sf Type II}}. \\
\end{array}
\end{align*}
\end{lemma}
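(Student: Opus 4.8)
The plan is to prove the two claims for each type separately: first that the displayed function is indeed supersymmetric, and second that the space $\mathcal{F}(A)$ is at most one-dimensional. For the {\sf Type I} case, $A = \en(N)$ with $N = \C^{m|k}$. The fact that $a \mapsto \str_N(a)$ is supersymmetric is exactly the identity $\str_N[a, b] = 0$ recalled in Section \ref{definitions}, which says $\str_N(ab) = p(a, b)\str_N(ba)$; since $h$ is trivial here ($\la \equiv 1$) this is precisely (\ref{nearsusy}). For the {\sf Type II} case, $A = Q_n$ with irreducible module $N = \C^n + \C^n\suv$; I would check directly that $a \mapsto \tr_N(a\suv)$ satisfies $f(ab) = p(a,b)f(ba)$ by writing $f(ab) = \tr_N(ab\suv) = p(b, \suv)\tr_N(a\suv b) = p(b,\suv)p(a, b\suv)\tr_N(b\suv a) = \ldots$, using that $\suv$ is odd and central (it commutes with $\en(\C^n)$ and $\suv^2 = 1$) together with cyclicity of the ordinary trace; the parity factors bookkeep to give $p(a,b)$. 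Alternatively, and perhaps more cleanly, I would observe that $b \mapsto \tr_N(b)$ is an ordinary (super)symmetric-type functional on $Q_n$ viewed appropriately and reduce to the {\sf Type I} computation on $\en(\C^{n|n})$ after realizing $Q_n$ as block matrices $\left(\begin{smallmatrix} A & B \\ B & A\end{smallmatrix}\right)$, for which $\suv = \left(\begin{smallmatrix} 0 & 1 \\ 1 & 0\end{smallmatrix}\right)$ and $\tr_N(a\suv) = \tr(B) + \tr(B) $ — wait, one must be careful with the superspace structure; I would just do the direct parity computation.

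For the uniqueness (one-dimensionality), the argument I would give is representation-theoretic. A supersymmetric function $f$ on $A$ vanishes on all supercommutators $[a,b] = ab - p(a,b)ba$, i.e., on the subspace $[A, A] \subseteq A$ spanned by such elements. So $\dim \mathcal{F}(A) = \dim (A/[A,A])^*$, and it suffices to show $A/[A,A]$ is one-dimensional, equivalently that $[A,A]$ has codimension $1$. In the {\sf Type I} case $A = \en(\C^{m|k})$: the ordinary commutator subspace of $\en(V)$ already has codimension $1$ (kernel of the ordinary trace), but I need the \emph{super}commutator subspace. I would argue that $[A,A]$ (super version) is the kernel of $\str_N$: one inclusion is $\str_N[a,b] = 0$; for the reverse, $\str_N$ is surjective onto $\C$, so $\ker\str_N$ has codimension $1$, and it remains to check $[A,A] \supseteq \ker\str_N$, which can be done by exhibiting enough supercommutators — e.g. all matrix units $e_{ij}$ with $i \neq j$ are supercommutators, and the differences $e_{ii} - e_{jj}$ with matching parity are supercommutators, while for $e_{ii} - e_{jj}$ with opposite parities one uses a supercommutator of odd matrix units $[e_{ij}, e_{ji}] = e_{ij}e_{ji} + e_{ji}e_{ij} = e_{ii} + e_{jj}$, so combined with the same-parity relations one gets everything in $\ker\str_N$. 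In the {\sf Type II} case, $Q_n = \en(\C^n) \oplus \en(\C^n)\suv$ as a superspace (even part $\en(\C^n)$, odd part $\en(\C^n)\suv$); I would compute $[Q_n, Q_n]$ by splitting into parity-homogeneous pieces and show it has codimension $1$ with $Q_n/[Q_n,Q_n]$ detected by $a \mapsto \tr_N(a\suv)$ — concretely, this functional is (up to scalar) $B \suv \mapsto \tr(B)$ on the odd part and $0$ on the even part, and one checks that supercommutators span the kernel: $[\en(\C^n), \en(\C^n)]$ (ordinary commutator, since both even) is $\mathfrak{sl}_n$, i.e. codimension $1$ inside the even part, and $[\en(\C^n)\suv, \en(\C^n)] = [\en(\C^n), \en(\C^n)]\suv$ plus $\suv$-conjugation terms which, since $\suv$ is central on $\en(\C^n)$, again gives $\mathfrak{sl}_n\suv$ — hmm, but I also get even-part contributions from odd–odd supercommutators $[A\suv, B\suv] = A\suv B\suv + B\suv A\suv = (AB+BA)$, which spans all of $\en(\C^n)$ since $AB+BA$ with $A=B=I$ gives $2I$. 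So the even part is entirely in $[Q_n,Q_n]$, and the odd part contributes exactly $\mathfrak{sl}_n\suv$, giving codimension $1$ overall with quotient detected by $\tr$ on the $\suv$-component, i.e. by $a \mapsto \tr_N(a\suv)$ up to normalization.

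The main obstacle I anticipate is the bookkeeping of parity signs — both in verifying that the proposed functions are supersymmetric (the {\sf Type II} cyclicity computation) and, more seriously, in pinning down the supercommutator subspace $[A,A]$ precisely in each case, since the odd–odd supercommutators are \emph{anticommutators} and behave differently from the even ones; getting the codimension exactly equal to $1$ (not $0$, not $2$) requires checking that these anticommutator contributions fill in exactly the ``missing'' directions. Everything else is routine linear algebra over $\C$ combined with the structure theory already established in Theorem \ref{superstructure}.
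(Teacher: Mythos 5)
Your proposal is correct, and it reaches the paper's conclusion by a mildly different route. Where the paper analyzes an arbitrary supersymmetric $f$ directly --- evaluating it on matrix units in the \textsf{Type I} case, and in the \textsf{Type II} case showing $f(A_0)=0$ and that $a\mapsto f(a\suv)$ is an ordinary symmetric function on $A_0\cong \en(\C^n)$, hence a multiple of the trace --- you dualize: $\mathcal{F}(A)=(A/[A,A])^*$ for the span $[A,A]$ of supercommutators, and you show $[A,A]$ has codimension one, equal to $\ker\str_N$ (resp.\ the kernel of $a\mapsto\tr_N(a\suv)$). In \textsf{Type I} the two arguments are the same matrix-unit manipulations viewed from dual sides; in \textsf{Type II} your direct computation of $[Q_n,Q_n]$ (in particular that the odd--odd supercommutators $[A\suv,B\suv]=AB+BA$ already exhaust the even part, e.g.\ via $[\suv,A\suv]=2A$, while the even--odd ones give exactly $\mathfrak{sl}_n\suv$) replaces the paper's reduction to symmetric functions on $A_0$; both work, and yours makes the codimension-one statement more transparent. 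One small correction to your sketch: in checking that $a\mapsto\tr_N(a\suv)$ is supersymmetric you insert parity factors into the cyclicity of the \emph{ordinary} trace, but ordinary cyclicity carries no signs; together with the (ordinary) centrality of $\suv$ it gives $\tr_N(ab\suv)=\tr_N(ba\suv)$ with no sign at all. This is still consistent with $f(ab)=p(a,b)f(ba)$, but only because $f$ vanishes on the even part, so the identity is nontrivial only when $a$ and $b$ have opposite parities, in which case $p(a,b)=1$; the bookkeeping does not produce the sign $p(a,b)$ --- it shows no sign is needed.
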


\begin{proof}
Let $f$ be a supersymmetric function on $A = \en(\C^{m|k})$. Fix a $\Z/2\Z$-homogeneous basis $e_1, \ldots e_{m+k}$ of $\C^{m|k}$ where $e_1, \ldots, e_m$ are even and $e_{m+1}, \ldots, e_{m+k}$ odd. Define $E_{ij} \in A$ by $E_{ij}(e_k) = \delta_{ik} e_j$. If $i \neq j$ then
\[
f(E_{ij}) = f(E_{ii} E_{ij}) = \pm f(E_{ij} E_{ii}) = \pm f(0) = 0.
\]
Thus $f(a) = \sum_i k_i a_{ii}$ is a linear combination of the diagonal entries of $a$. We now have
\[
f(E_{ii}) = f(E_{ij} E_{ji}) = \pm f(E_{ji} E_{ij}) = \pm f(E_{jj}),
\]
where the sign $\pm$ is a $+$ if $E_{ij}$ is even (i.e., if $e_i$ and $e_j$ have the same parity) and is a $-$ if $E_{ij}$ is odd (i.e., if $e_i$ and $e_j$ have opposite parity). Therefore, up to a scalar multiple $k_i = 1$ for $i = 1, \ldots, m$ and $k_i = -1$ for $i = m+1, \ldots, m+k$. Hence $f$ is proportional to $\str_N$ where $N = \C^{m|k}$ is the unique irreducible $A$-module.

Now let $f$ be a supersymmetric function on $A = Q_n$. For $a \in A_0$ we have $f(a) = f(\suv(\suv a)) = -f((\suv a)\suv) = -f(a)$, hence $f(a) = 0$. On the other hand $g : A_0 \rightarrow \C$ defined by $g(a) = f(a \suv)$ is symmetric because $g(ab) = f(ab \suv) = f(b \suv a) = f(ba \suv) = g(ba)$. Therefore $g$ is a scalar multiple of $\tr_{N_0}$ and $f$ is a scalar multiple of the map $a \mapsto \tr_{N_0}(a \suv) = (1/2)\tr_N(a \suv)$.
\end{proof}
Now we turn to the description of $h$-supersymmetric functions. First we need the following superalgebra analog of the Skolem-Noether theorem.
\vspace{.3cm}
\begin{lemma}\label{choosingiota}
Let $A$ be a finite dimensional simple superalgebra, $N$ its irreducible module, and let $h$ be an automorphism of $A$. There exists invertible $\iota \in \en_\C(N)$ such that $h(a) x = \iota^{-1} a \iota x$ for all $a \in A$, $x \in N$. If $A$ is of {\sf Type I} then $\iota$ is $\Z / 2\Z$-homogeneous and is unique up to a nonzero scalar factor. If $A$ is of {\sf Type II} then $\iota$ (while not unique) may be chosen to be $\Z/2\Z$-homogeneous of either parity.
\end{lemma}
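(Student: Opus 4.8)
The strategy is to mimic the classical Skolem--Noether argument, using Theorem \ref{superstructure} to supply the uniqueness of the simple $A$-module in place of the uniqueness of the simple module of a matrix algebra.

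First I would form the $h$-twisted module $N^h$: this is the superspace $N$ equipped with the new $A$-action $a \cdot_h x = h(a)x$. Since $h$ is an automorphism of $A$, this is again an irreducible $A$-module, so by the uniqueness part of Theorem \ref{superstructure} it is equivalent to $N$; by the conventions of Section \ref{definitions} that equivalence is witnessed by a $\Z/2\Z$-homogeneous isomorphism $\iota : N^h \to N$, which may be even or odd. Regarding $\iota$ as an invertible element of $\en_\C(N)$, the assertion that $\iota$ intertwines the two module structures reads $\iota(h(a)x) = a\,\iota(x)$ for all $a \in A$, $x \in N$, i.e. $h(a)x = \iota^{-1} a \iota x$. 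Thus $\iota$ is as required, and it is automatically $\Z/2\Z$-homogeneous. (Alternatively, had one started from an inhomogeneous solution, one could split it into even and odd parts and compare parities in $\iota h(a) = a\iota$, using that automorphisms of superalgebras are even, to see that each part is again an intertwiner, hence $0$ or invertible by irreducibility of $N$ and $N^h$; at least one part is then a homogeneous solution.)

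Next the two types. If $A$ is of {\sf Type I} and $\iota,\iota'$ both realise $h$ as above, then from $\iota^{-1}a\iota = (\iota')^{-1}a\iota'$ one gets that $\iota'\iota^{-1}$ commutes with the image of $A$ in $\en_\C(N)$, hence lies in $\en_A(N) = \C 1$ by the Schur lemma (third bullet of Theorem \ref{superstructure}). So $\iota' \in \C^\times\iota$: the operator $\iota$ is unique up to a nonzero scalar, and in particular its parity is determined. If $A$ is of {\sf Type II}, take a homogeneous solution $\iota$ produced above; by the Schur lemma $\en_A(N) = \C 1 + \C\suv$, so $\suv \in \en_A(N)$ is an odd invertible operator commuting with the $A$-action on $N$. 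Then $\suv\iota$ again satisfies $h(a) = (\suv\iota)^{-1} a (\suv\iota)$ (using $\suv^{-1} a \suv = a$ as operators on $N$) and has parity opposite to that of $\iota$, so $\iota$ can be chosen homogeneous of either parity. This finishes the proof.

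The only point requiring care is the $\Z/2\Z$-grading bookkeeping: one must use the convention that module equivalences are allowed to be odd (so that a homogeneous $\iota$ exists in the {\sf Type I} case, and both parities are available in the {\sf Type II} case), and one must use precisely the unsigned notion of $\en_A(N)$ for which the Schur lemma of Theorem \ref{superstructure} is stated, so that $\suv\iota$ is genuinely another solution rather than a sign-twisted variant. Everything else is a direct transcription of the ungraded Skolem--Noether argument.
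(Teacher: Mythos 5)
Your proof is correct and follows essentially the same route as the paper: twist the module by $h$, invoke the uniqueness of the irreducible module to obtain an intertwiner $\iota$, and use the super Schur lemma to control $\en_A(N)$. The only real difference is in {\sf Type II}, where you flip the parity of $\iota$ by multiplying by the odd unit $\suv$ (which commutes with $A$ on $N$ in the unsigned sense, as you note), whereas the paper constructs $\iota_0 \in A_0$ by classical Skolem--Noether and exhibits $\Hom_A(h\cdot N, N) = \C\,\iota_0\sigma_N^p + \C\,\iota_0\sigma_N^p\suv$ explicitly; both are valid and yield the same conclusion.
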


\begin{proof}
The map $(a, x) \mapsto h(a)x$ defines an irreducible action of $A$ on $N$ which we denote $h \cdot N$ to distinguish it from the original action. By the second item of Theorem \ref{superstructure} the $A$-modules $N$ and $h \cdot N$ are equivalent, i.e., there exists invertible $\iota \in \Hom_A(h \cdot N, N)$. A choice of such $\iota$ identifies $\Hom_A(h \cdot N, N)$ with $\en_A(N)$.

Let $A$ be of {\sf Type I}. Then $\en_A(N) = \C 1$ and so $\iota$ is unique up to a nonzero scalar factor. By definition $h$ commutes with the parity operator $\sigma_A$. This implies $(\sigma_N\iota)^{-1}a(\sigma_N\iota) = (\iota \sigma_N)^{-1}a(\iota \sigma_N)$ for all $a \in A$. Hence by the Schur lemma $\sigma_N \iota = \eps \iota \sigma_N$ for some nonzero constant $\eps$. Since the eigenvalues of $\sigma_N$ are $\pm 1$ we have $\eps = \pm 1$ and in either case $\iota$ is $\Z/2\Z$-homogeneous.

Let $A$ be of {\sf Type II}. There exists $\iota_0 \in A_0$ such that $h(a) = \iota_0^{-1} a \iota_0$ for all $a \in A_0$. Since we have $h(\suv)^2 = h(\suv^2) = h(1) = 1$ and $h(a)h(\suv) = h(a\suv) = h(\suv a) = h(\suv)h(a)$ for all $a \in A$ we deduce $h(\suv) = \pm \suv$. Write $h(\suv) = (-1)^p \suv$. By the Schur lemma $\Hom_A(h \cdot N, N)$ is two dimensional. One then easily verifies $\Hom_A(h \cdot N, N) = \C \iota_0 \sigma_N^p + \C \iota_0\sigma_N^p\suv$. The claim follows.
\end{proof}
To facilitate the description of $h$-supersymmetric functions we make the following definition.
\vspace{.3cm}
\begin{defn}\label{choosinggamma}
Let $A$ be a finite dimensional simple superalgebra, $N$ its irreducible module, and let $h$ be an automorphism of $A$. We define $\ga$ to be an invertible element of $\en_\C(N)$ satisfying $h(a) x = \ga^{-1} a \ga x$ for all $a \in A$, $x \in N$. If $A$ is of {\sf Type II} we further require that $\ga$ be even (resp. odd) if $h(\suv) = \suv$ (resp. $h(\suv) = -\suv$).
\end{defn}
The map $\ga$ so defined is unique up to a nonzero scalar factor.
\vspace{.3cm}
\begin{lemma} \label{htoordlemma}
Let $A$ be a finite dimensional simple superalgebra, $N$ its irreducible module, and let $h$ be an automorphism of $A$ of finite order. The space $\mathcal{F}_h(A)$ of $h$-supersymmetric functions on $A$ is one dimensional and is spanned by the function
\begin{align*}
\begin{array}{ll}
a \mapsto \str_N \left(a \ga \sigma_N^{p(\ga)}\right) & \text{if $A$ is of {\sf Type I}}, \\
a \mapsto \tr_N \left(a \ga \sigma_N^{p(\ga)} \suv\right) & \text{if $A$ is of {\sf Type II}}. \\
\end{array}
\end{align*}
\end{lemma}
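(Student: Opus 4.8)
The plan is to prove two assertions: first that the displayed functional, call it $f_0$, is $h$-supersymmetric and nonzero, and second that $\dim\mathcal{F}_h(A)\leq 1$. The key preliminary remark is that the (super)trace defining $f_0$ is twisted by an element of $A$ itself. In the {\sf Type I} case $\rho:=\ga\sigma_N^{p(\ga)}$ is an invertible homogeneous element of $A=\en(\C^{m|k})$, since both $\ga$ and the parity operator $\sigma_N$ lie in $A$. In the {\sf Type II} case, taking $\ga=\iota_0$ when $h(\suv)=\suv$ and $\ga=\iota_0\sigma_N\suv$ when $h(\suv)=-\suv$ (the normal forms from the proof of Lemma \ref{choosingiota}), a short computation using $\sigma_N\suv=-\suv\sigma_N$ and $\suv^2=\sigma_N^2=1$ gives $\rho:=\ga\sigma_N^{p(\ga)}\suv=\iota_0\suv$ resp.\ $-\iota_0$, which again lies in $A$. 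So $f_0(a)=\str_N(a\rho)$ resp.\ $\tr_N(a\rho)$ is a linear functional on $A$.

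For the $h$-supersymmetry of $f_0$ I would first record the identities $\ga a=\la(a)^{-1}a\ga$ (immediate from Definition \ref{choosinggamma} and $h(a)=\la(a)a$), $\sigma_N a=(-1)^{p(a)}a\sigma_N$, and the centrality of $\suv$ in $A$. Applying (super)cyclicity of the (super)trace to $f_0(ab)$ and transporting $a$ once around past $\rho$, these identities give $f_0(ab)=\epsilon(a,b)\,\la(a)^{-1}f_0(ba)$ for an explicit parity sign $\epsilon(a,b)$. To upgrade this to \eqref{nearsusy} one observes two things. A short computation of the conjugate of $\rho$ by $\ga$ (resp.\ by $\iota_0$) shows that $f_0\circ h=f_0$ on the subspace where $f_0$ does not identically vanish; hence $f_0(c)=0$ whenever $c$ is an $h$-eigenvector with $\la(c)\neq 1$, and since $ba$ has $h$-eigenvalue $\la(b)\la(a)$ this supplies the factor $\delta_{\la(a)\la(b),1}$. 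Moreover both $\str_N$ and $\tr_N$ annihilate odd operators, so $f_0(x)=0$ unless $x\rho$ is even; thus $f_0(ba)\neq 0$ forces a fixed relation between $p(a)$ and $p(b)$, and on that locus one checks $\epsilon(a,b)=(-1)^{p(a)p(b)}=p(a,b)$. Combining these, $f_0$ satisfies \eqref{nearsusy}. Finally $f_0$ is nonzero: $\rho$ is invertible in $A$, so it is enough that the (super)trace is not identically zero on $A$, which it is not.

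For the dimension bound, setting $b=1$ in \eqref{nearsusy} gives $f(a)=\delta_{\la(a),1}\la(a)^{-1}f(a)$, so every $f\in\mathcal{F}_h(A)$ vanishes on all $h$-eigenspaces $A^\zeta$ with $\zeta\neq 1$ and restricts on the fixed subalgebra $A^h$ (semisimple, as fixed points of a finite group action over $\C$) to a supersymmetric functional; thus restriction embeds $\mathcal{F}_h(A)$ into $\mathcal{F}(A^h)=\bigoplus_B\mathcal{F}(B)$, the sum over the simple components $B$ of $A^h$, each summand one dimensional by Lemma \ref{whataresymm}. It remains to see the extension constraints cut this to dimension one. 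They come from pairs $a\in A^\zeta$, $b\in A^{\zeta^{-1}}$ with $\zeta\neq 1$: then $ab,ba\in A^h$, and \eqref{nearsusy} relates the value of $f$ on one simple component of $A^h$ to its value on another. Using the cyclicity identity $\str_W(XY)=(-1)^{p(X)p(Y)}\str_V(YX)$ for $X\colon V\to W$, $Y\colon W\to V$ (the two parity signs cancelling), each such constraint identifies the scalar attached to one component of $A^h$ with a fixed multiple of the scalar attached to another; since every ratio of eigenvalues of $\ga$ is an $h$-eigenvalue, all the components are linked, so the tuple of scalars is a single multiple of the one realizing $f_0$. (One can instead run the matrix-unit computation from the proof of Lemma \ref{whataresymm} in a homogeneous basis of $N$ adapted to $\ga$.) Hence $\dim\mathcal{F}_h(A)=1$, spanned by $f_0$, as claimed.

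The step I expect to demand the most care is the sign bookkeeping: checking in the $h$-supersymmetry argument that the accumulated parity signs, together with the support constraint on $f_0$, reproduce \emph{exactly} $\delta_{\la(a)\la(b),1}\,p(a,b)\,\la(a)^{-1}$ in all four cases ({\sf Type I}/{\sf II}, $\ga$ even or odd), and likewise confirming that the extension constraints in the dimension bound link all simple components of $A^h$ with consistent signs. The aim throughout is to phrase these in terms of $\rho$ and its parity, so that a single argument covers all cases rather than a four-way split.
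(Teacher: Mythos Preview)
Your argument is correct in outline but takes a genuinely different route for the dimension bound than the paper. For Step~1 (that $f_0$ is $h$-supersymmetric and nonzero), your observation that $\rho\in A$ in all cases is clean and the cyclicity computation goes through; this part is close in spirit to the paper's verification, though organized differently.

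For the bound $\dim\mathcal{F}_h(A)\leq 1$, the paper does something more direct: given $f\in\mathcal{F}_h(A)$ it \emph{untwists} by setting $\ov{f}(a)=f(a\ga^{-1})$ (or $f(a\ga^{-1}\sigma_N)$, $f(a\iota_0^{-1})$, according to case), and shows by a short computation that $\ov{f}$ is a supersymmetric function on the \emph{simple} superalgebra $A$ itself. Lemma~\ref{whataresymm} then gives $\dim\leq 1$ in one stroke. Your approach instead restricts $f$ to the fixed subalgebra $A^h$, which is typically not simple, and then argues that the constraints coming from $a\in A^\zeta$, $b\in A^{\zeta^{-1}}$ with $\zeta\neq 1$ link all the simple components of $A^h$. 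This works, but it buys you extra bookkeeping: you need semisimplicity of $A^h$, a description of its simple components (which, for $\ga$ odd in {\sf Type I}, can themselves be of {\sf Type II}), and a connectedness argument for the linking graph. The paper's untwisting sidesteps all of this by reducing to $A$ rather than $A^h$. Your parenthetical remark about running the matrix-unit computation in a $\ga$-adapted basis is in fact closer to the paper's idea and would be the simpler path.
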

We shall sometimes abuse notation slightly by writing both cases as $a \mapsto F(a \ga \sigma_N^{p(\ga)})$ where $F \in \mathcal{F}(A)$.
\begin{proof}
Let $A$ be of {\sf Type I}. Since $h(\ga) = \ga^{-1} \ga \ga = \ga$ we have $\la(\ga) = 1$. Also $h(\sigma_N) = \ga^{-1} \sigma_N \ga = (-1)^{p(\ga)} \sigma_N$ so $\la(\sigma_N) = (-1)^{p(\ga)}$. Let $f \in \mathcal{F}_h(A)$ and put $\ov{f}(a) = f(a \ga^{-1})$ if $\ga$ is even and $\ov{f}(a) = f(a \ga^{-1} \sigma_N)$ if $\ga$ is odd. Then if $\ga$ is even we have
\begin{align} \label{htoord}
\begin{split}
\ov{f}(ab) = f(ab\ga^{-1})
&= \la(a)^{-1} \delta_{\la(a)\la(b\ga^{-1}), 1} p(a, b\ga^{-1}) f(b \ga^{-1} a) \\
&= \la(a)^{-1} \delta_{\la(a)\la(b), 1} p(a, b) f(b \ga^{-1} a \ga \ga^{-1}) \\
&= \delta_{\la(a)\la(b), 1} p(a, b) f(b a \ga^{-1}) \\
&= \delta_{\la(a)\la(b), 1} p(a, b) \ov{f}(b a),
\end{split}
\end{align}
while if $\ga$ is odd we have
\begin{align} \label{htoord2}
\begin{split}
\ov{f}(ab) = f(ab\ga^{-1}\sigma_N)
&= \la(a)^{-1} \delta_{\la(a)\la(b\ga^{-1}\sigma_N), 1} p(a, b\ga^{-1}\sigma_N) f(b \ga^{-1} \sigma_N a) \\
&= \la(a)^{-1} \delta_{\la(a)\la(b), -1} p(a, b\ga^{-1}) (-1)^{p(a)} f(b \ga^{-1} a \sigma_N) \\
&= \la(a)^{-1} \delta_{\la(a)\la(b), -1} p(a, b) f(b \ga^{-1} a \ga \ga^{-1} \sigma_N) \\
&= \delta_{\la(a)\la(b), -1} p(a, b) f(b a \ga^{-1} \sigma_N) \\
&= \delta_{\la(a)\la(b), -1} p(a, b) \ov{f}(b a).
\end{split}
\end{align}
In particular $\ov{f}$ is supersymmetric in both cases and therefore is a multiple of $\str_N$. Therefore $f$ is a scalar multiple of $a \mapsto \str_N(a\ga \sigma_N^{p(\ga)})$. It remains to verify that the latter function is $h$-supersymmetric; for this we need only show that $\str_N(a\ga \sigma_N^{p(\ga)}) \neq 0$ only when $\la(a) = 1$. Well, $N$ splits into eigenspaces for $\ga$ and $a$ maps the $\eps$ eigenspace to the $\la(a) \eps$ eigenspace. In general $\str_N(b) = \tr_N(b\sigma_N)$ vanishes unless $\la(a\sigma_N) = 1$, so $\str_N(a\ga \sigma_N^{p(\ga)})$ vanishes unless $1 = \la(a) \la(\ga) \la(\sigma_N)^{p(\ga)+1} = \la(a)$.

Now let $A$ be of {\sf Type II} and assume first that $h(\suv) = \suv$. For $f \in \mathcal{F}_h(A)$ put $\ov{f}(a) = f(a \ga^{-1})$. We may repeat calculation (\ref{htoord}) to deduce that $\ov{f}$ is a supersymmetric function on $A$, hence a multiple of $a \mapsto \tr_N(a \suv)$. Therefore $f$ is a scalar multiple of $a \mapsto \tr_N(a \ga \suv)$.

The final case of $h(\suv) = -\suv$ is a little more delicate because $\ga = \iota_0 \sigma_N \suv$ (where $\iota_0$ is as in the proof of Lemma \ref{choosingiota}) is not an element of $A$. For $f \in \mathcal{F}_h(A)$ we let $\ov{f}(a) = f(a\iota_0^{-1})$ and repeat calculation (\ref{htoord}). Since $\iota_0^{-1} a \iota_0 = (-1)^{p(a)} h(a)$ we obtain this time
\[
\ov{f}(ab) = \delta_{\la(a)\la(b), 1} p(a, b) (-1)^{p(a)} \ov{f}(ba)
\]
for all $a, b \in A$. For $a \in A_0$ we obtain
\[
\ov{f}(\suv a) = -\delta_{\la(a), -1} \ov{f}(a\suv),
\]
hence $\ov{f}(A_1) = 0$. For $a, b \in A_0$ we obtain
\[
\ov{f}(ab) = \delta_{\la(a)\la(b), 1} \ov{f}(ba),
\]
hence $\ov{f}$ is a symmetric function on $A_0$. It follows that $\ov{f}$ is a scalar multiple of $\tr_{N_0} = (1/2)\tr_N$. This means that $f$ is a scalar multiple of
\[
a \mapsto \tr_{N}(a \iota_0) = \tr_N(a \ga \suv \sigma_N) = -\tr_N(a \ga \sigma_N \suv).
\]

Finally we should verify that $\tr_N(a \ga \sigma_N^{p(\ga)} \suv) \neq 0$ only if $\la(a) = 1$. In the case $h(\suv) = \suv$ the trace vanishes unless $1 = \la(a) \la(\ga) \la(\suv) = \la(a)$. In the case $h(\suv) = -\suv$ the trace reduces to $\tr_{N_0}(a\iota_0)$ which again vanishes unless $\la(a) = 1$.
\end{proof}


\section{Supertrace Functions} \label{tracefunc}

In this section we use the results of Section \ref{symmfunc} to associate a supertrace function to each pair $(M, h)$ where $M$ is an irreducible positive energy $g$-twisted $V$-module, and $h$ is an automorphism of $V$ of finite order commuting with $g$. We show that this function lies in $\CC(g, h)$.

The Zhu algebra $\zhu_g(V)$ is finite dimensional (because $V$ is $C_2$-cofinite) and is semisimple by assumption. Let $A \subseteq \zhu_g(V)$ be a $h$-invariant simple component, $N$ its irreducible module, and let $\ga : N \rightarrow N$ be as in Definition \ref{choosinggamma}. It follows from Theorem \ref{functorial} that $\ga$ lifts to a grade-preserving map (which we also denote $\ga$) of the associated $V$-module $M = L(N)$ satisfying
\begin{align} \label{intertwine}
\ga^{-1} u^M_n \ga = h(u)^M_n
\end{align}
for all $u \in V$, $n \in [\eps_u]$. In the case that $A$ is of {\sf Type II} we shall make use of the involution $\suv$ of $M = L(N)$ lifted from $\suv : N \rightarrow N$ in the same way, i.e.,
\[
\suv u^M_n \suv = (-1)^{p(u)} u^M_n.
\]

Below we shall drop the $M$ superscripts. We sometimes refer to $M$ as being of {\sf Type I} (resp. {\sf II}) if its associated simple component $\zhu_g(V)$ is of {\sf Type I} (resp {\sf II}).
\vspace{.3cm}
\begin{defn}
Let $M$ be a positive energy $g$-twisted $V$-module and $h$ an automorphism of $V$ of finite order commuting with $g$. For $\alpha : M_r \rightarrow M_r$ an endomorphism of a graded piece of $M$ we define
\begin{align*}
T_{M_r}(\alpha) = \left\{\begin{array}{ll}
\str_{M_r}\left(\alpha \ga \sigma_M^{p(\ga)}\right) & \text{if $M$ is of {\sf Type I}}, \\
\tr_{M_r}\left(\alpha \ga \sigma_M^{p(\ga)} \suv\right) & \text{if $M$ is of {\sf Type II}}. \\
\end{array} \right.
\end{align*}
We define $T_M(\al) = \sum_r T_{M_{r}}(\al)$ whenever the sum is well-defined. Finally we define the \emph{supertrace function} associated to $M$ and $h$ to be
\begin{align}\label{supertracefunctiondefinition}
S_M(u, \tau) = T_M(u_0 q^{L_0 - \mathfrak{c}/24}).
\end{align}
\end{defn}
The main theorem of this section is
\vspace{.3cm}
\begin{thm} \label{tf-in-cb}
Let $M$ be a $h$-invariant irreducible positive energy $g$-twisted $V$-module. Then the supertrace function $S_M$ defined by (\ref{supertracefunctiondefinition}) lies in $\CC(g, h)$. Furthermore the $S_M$, as $M$ ranges over all such modules, are linearly independent.
\end{thm}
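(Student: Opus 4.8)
The plan is to prove the two assertions of Theorem~\ref{tf-in-cb} separately: first that $S_M \in \CC(g,h)$, and then that the $S_M$ are linearly independent as $M$ ranges over the $h$-invariant irreducible positive energy $g$-twisted $V$-modules.

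\textbf{Showing $S_M \in \CC(g,h)$.} I would verify axioms {\bf CB1}--{\bf CB4} in turn. {\bf CB1} is immediate from the linearity of $u \mapsto u_0$ and from extending $S_M$ to $\V$ by $\M_{|G|}$-linearity. For {\bf CB2}, convergence and holomorphy of the $q$-series defining $S_M(u,\tau)$ follow from the positive-energy grading together with the $C_2$-cofiniteness bound on the growth of $\dim M_r$; here one uses that $M_r = 0$ for $r$ sufficiently negative and that $u_0$ shifts grade by $0$, so $T_M(u_0 q^{L_0 - \mathfrak{c}/24}) = \sum_r T_{M_r}(u_0) q^{r - \mathfrak{c}/24}$ is a well-defined $q$-series with at most polynomial coefficient growth, hence converges on $\HH$. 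The heart of the matter is {\bf CB3} and {\bf CB4}; these are the analogs of Zhu's recursion formulas in the twisted super setting and are proved by the standard strategy: insert the field $Y^M(u,z)$ into a supertrace (or odd trace), use the $g$-twisted Borcherds/commutator identity of Definition~\ref{twisteddefn} together with the cyclicity properties of $\str$ and of the odd trace, and recognize the resulting $q$-expansions as the functions $P^{\mu,\la}$, $\zeta$, $\wp$, $G_2$ from Section~\ref{modforms}. The intertwining relation (\ref{intertwine}) is what makes $\ga$ (and $\suv$) pass through the modes $u_n$ at the cost of replacing $u$ by $h(u)$, which is precisely what forces the $\la(u)$-dependent Kronecker deltas; vectors with $\mu(u)\neq 1$ contribute trivially because their modes on $M$ are non-integrally graded and so cannot preserve $M_r$, giving $T_{M_r}(u_0) = 0$, consistent with $u \in \OO(g,h)$.

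\textbf{Linear independence.} For this I would use the results of Section~\ref{coeffs} and Section~\ref{symmfunc} together with the functorial bijection $L$ of Theorem~\ref{functorial}. The point is that the leading coefficient of $S_M(u,\tau)$ as a $q$-series recovers the $h$-supersymmetric function on $\zhu_g(V)$ attached to the simple component $A_M$ corresponding to $M$: explicitly, the coefficient of $q^{r_0 - \mathfrak{c}/24}$ (with $r_0$ the minimal grade of $M$, i.e.\ the grade of $M_0$) is $u \mapsto T_{M_0}(u_0) = F_{A_M}(u\,\ga\,\sigma^{p(\ga)})$ in the notation of Lemma~\ref{htoordlemma}, which is a nonzero element of $\mathcal{F}_h(A_M)$. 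Since distinct irreducible $M$ correspond under $L$ to distinct simple components $A_M$ of the semisimple algebra $\zhu_g(V)$ (distinct in the sense of being distinct $h$-orbits of components, as $L$ restricts to a bijection on $h$-invariant modules), and since $\mathcal{F}_h(\zhu_g(V)) = \bigoplus_{i \in J} \mathcal{F}_h(A_i)$ is a direct sum over the $h$-invariant simple components, these leading functionals are linearly independent. A nontrivial linear relation $\sum_M c_M S_M \equiv 0$ would, upon extracting the appropriate $q$-coefficients, force the corresponding relation among the leading $h$-supersymmetric functionals; but one must be slightly careful because different $M$ have different minimal grades $r_0$ and hence different leading powers of $q$, so one argues by picking out, for each simple component $A_i$, the coefficient at the precise power $q^{r_0(M) - \mathfrak{c}/24}$ associated to the module $M$ with $A_M = A_i$ and using that the functionals land in different summands $\mathcal{F}_h(A_i)$.

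\textbf{Main obstacle.} The genuinely technical step is the verification of {\bf CB4} (equivalently (\ref{ax4'})) and of the $\ide_3$ part of {\bf CB3}: one must carry out the Zhu-type contour/residue manipulation on a supertrace of a product of twisted fields, correctly tracking (i) the twisting shift $\eps_u$ in the mode index, (ii) the sign $p(u,v)$ from the super-structure, (iii) the insertion of $\ga \sigma_M^{p(\ga)}$ (and $\suv$ in {\sf Type II}) and how it commutes past $u_n$ via (\ref{intertwine}), and (iv) the identification of the resulting number-theoretic series with $P^{\mu,\la}_k$, $G_2$, $\wp$. All four generalizations of Theorem~\ref{Zhuthm} cited in the introduction perform a version of this computation, and I would follow \cite{DLMorbifold} and \cite{DZ} closely, indicating the places where the odd trace and the interaction $h(\suv) = \pm\suv$ require the extra bookkeeping already set up in Lemma~\ref{htoordlemma} and Definition~\ref{choosinggamma}.
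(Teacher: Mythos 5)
Your overall architecture matches the paper's: verify \textbf{CB1}--\textbf{CB4} directly, obtain \textbf{CB3}/\textbf{CB4} from a twisted super Zhu-type recursion, and get linear independence from the leading $q$-coefficients being $h$-supersymmetric functionals supported on distinct simple components of $\zhu_g(V)$. Two substantive points, though. First, your convergence argument is wrong as stated: the coefficients of the $q$-series do not have ``at most polynomial growth'' in general (already for free fermions $\dim M_r$ grows like $e^{c\sqrt{r}}$), and the positive-energy grading by itself only yields a formal power series. The paper's route, following Zhu and \cite{DLMorbifold}, is that once \textbf{CB3} and \textbf{CB4} are established \emph{formally}, the series $S_M$ satisfies the Fuchsian ODE of Theorem \ref{getdiff} with a regular singular point at $q=0$, and a formal solution of such an equation automatically converges; this is where $C_2$-cofiniteness enters, not through a coefficient bound. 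Second, the verification of \textbf{CB3} and \textbf{CB4} is the actual content of the theorem and you have only named the ingredients. In the paper this occupies Propositions \ref{mula=0}--\ref{trtodef2}: Proposition \ref{mula=0} gives the twisted cyclicity $T_{M_r}(u_0v_0)=\delta_{\la(u)\la(v),1}\,p(u,v)\,\la(u)^{-1}T_{M_r}(v_0u_0)$ and its shifted analogue for $u_nv_{-n}$ (obtained by rerunning the argument of Lemma \ref{htoordlemma} on $\en_{\C}(M_r)$), Proposition \ref{a-1b} is the key telescoping residue computation for $T_M((\res_z P^{\mu,\la}(z,q)Y[u,z]v\,dz)_0\,q^{L_0})$, and Propositions \ref{trtodef1} and \ref{trtodef2} deduce the $\ide_2$ relation and \textbf{CB4} by substituting $L_{[-1]}u$ and $\tilde{\om}$. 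Your outline is consistent with this, but as written it defers the proof rather than giving it.

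On linear independence your idea is the paper's, but the extraction step needs one more word: the coefficient of $q^{h(M)-\mathfrak{c}/24}$ in $\sum_{M'}c_{M'}S_{M'}$ also receives contributions from non-leading terms of modules $M'$ of strictly smaller conformal weight, and those higher coefficients are \emph{not} $h$-supersymmetric functionals on $\zhu_g(V)$, so the decomposition $\mathcal{F}_h(\zhu_g(V))=\oplus_i\mathcal{F}_h(A_i)$ does not by itself separate them. The standard fix is to induct on conformal weight: among the modules with $c_M\neq 0$, the minimal conformal weight receives only leading-term contributions, and these are nonzero functionals supported on pairwise distinct simple components, forcing those $c_M$ to vanish.
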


\begin{proof}
The $S_M$ are linearly independent because the $h$-supersymmetric functions on $\zhu_g(V)$ are (each is supported on a different simple component). The proof that $S_M \in \CC(g, h)$ is carried out in Propositions \ref{mula=0}-\ref{trtodef2} below.

To summarize: in Propositions \ref{mula=0} and \ref{a0b} we show that $T_M$ annihilates all $u \in V$ satisfying $(\mu(u), \la(u)) \neq (1, 1)$, and annihilates $u_{([0])}v$ for $(\mu(u), \la(u)) = (1, 1)$. In Proposition \ref{a-1b} we establish an identity which is used in Proposition \ref{trtodef1} to show that $T_M$ annihilates the remaining elements of $\OO(g, h)$. These properties pass immediately to $S_M$ and thus {\bf CB3} is verified.

In Proposition \ref{trtodef2} we use Proposition \ref{a-1b} again to show that $S_M$ satisfies {\bf CB4}.

Axiom {\bf CB2} is automatic. 

To verify {\bf CB1}, we must show that the power series $S_M$ converges to a holomorphic function in $|q| < 1$. In the presence of the $C_2$-cofiniteness condition {\bf CB1} follows from the other axioms. This is because the calculations of Section \ref{ODEsection} show the power series $S_M$ formally satisfies a Fuchsian ODE and it follows that $S_M$ converges to a solution of this ODE (see \cite{DLMorbifold}).
\end{proof}

Since $\om \in V$ is $h$-invariant, $\ga$ commutes with $L_0$ and $q^{L_0}$. Since $\om$ is in the center of $\zhu_g(V)$ and its module $N = M_0$ is irreducible, $L_0$ acts on $N$ as a scalar (called the \emph{conformal weight} of $M$ and often denoted $h(M)$). Recall that $[L_0, u_k] = -ku_k$. It follows that $L_0$ acts on $M_r$ as the scalar $h(M) + r$, and that $u_0$ commutes with $L_0$ and $q^{L_0}$.

\vspace{.3cm}
\begin{prop} \label{mula=0}
We have
\begin{enumerate}[(a)]
\item If $(\mu(u), \la(u)) \neq (1, 1)$, then $T_{M_r}(u_0) = 0$.

\item For all $u, v \in V$ we have $T_{M_r}(u_0v_0) = \delta_{\la(u)\la(v), 1} p(u, v) \la(u)^{-1} T_{M_r}(v_0u_0)$.

\item If $\mu(v) = \mu(u)^{-1}$ and $n \in [\eps_u]_{>0}$ then
\[
T_{M_r} u_n v_{-n} = \la(u)^{-1} p(u, v) T_{M_{r+n}} v_{-n} u_{n}.
\]
\end{enumerate}
\end{prop}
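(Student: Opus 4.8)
The three parts are all statements about supertraces (or traces, in the {\sf Type II} case) of operators on the finite-dimensional graded piece $M_r$, so the plan is to reduce each one to a cyclicity/vanishing property of $T_{M_r}$ together with the $h$-intertwining relation (\ref{intertwine}). Throughout I write $T = T_{M_r}$ and recall that in both cases $T(\al) = F(\al \ga \sigma_M^{p(\ga)})$ for the appropriate ``$F$'' (a supertrace in {\sf Type I}, the map $\beta \mapsto \tr(\beta\suv)$ in {\sf Type II}), and that $F$ kills supercommutators, i.e.\ $F(XY) = p(X,Y) F(YX)$ whenever $X,Y$ preserve $M_r$; in the {\sf Type II} case one uses in addition that $\suv$ is even, commutes with $L_0$ and $\ga$, and conjugates $u_n$ to $(-1)^{p(u)} u_n$.

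\medskip
For part (a): since $M_r$ is an eigenspace of $L_0$, the operator $u_0 = u_{(\D_u-1)}^M$ maps the $h$-eigenspace decomposition of $M_r$ in a way governed by $\la(u)$, and likewise $\ga$ conjugates $u_0$ to $h(u)_0 = \la(u)^{-1} u_0$. The standard argument (already used in the proof of Lemma \ref{htoordlemma}) is that $F(\al \ga \sigma_M^{p(\ga)})$ can only be nonzero when the $h$-weight of the argument is $1$; here $h(u_0\,\ga\,\sigma_M^{p(\ga)}) = \la(u)^{-1}\cdot\la(\ga)\cdot\la(\sigma_M)^{p(\ga)}\cdot(u_0\,\ga\,\sigma_M^{p(\ga)}) = \la(u)^{-1}(u_0\ga\sigma_M^{p(\ga)})$ because $\la(\ga)=1$ and $\la(\sigma_M)^{p(\ga)}=(-1)^{p(\ga)^2}$ contributes trivially, so $T(u_0) = 0$ unless $\la(u) = 1$. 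The hypothesis $\mu(u)\neq 1$ forces $u_0$ to shift degree inside a single $g$-twisted module (i.e.\ $\eps_u\notin\Z$ implies $0\notin[\eps_u]$, so there is no mode ``$u_0$'' at all, or more carefully $u_0M_r\subseteq M_r$ only when $0\in[\eps_u]$), which disposes of the case $\mu(u)\neq 1$ directly; combining the two cases gives (a).

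\medskip
For part (b): both $u_0$ and $v_0$ preserve $M_r$, so apply the cyclicity of $F$ to $X = u_0$, $Y = v_0\,\ga\,\sigma_M^{p(\ga)}$. This gives $T(u_0 v_0) = F(u_0 \cdot v_0\ga\sigma_M^{p(\ga)}) = p(u_0, v_0\ga\sigma_M^{p(\ga)})\, F(v_0\ga\sigma_M^{p(\ga)}\, u_0)$; now move $u_0$ back past $\ga\sigma_M^{p(\ga)}$ using $\ga^{-1}u_0\ga = \la(u)^{-1}u_0$ and $\sigma_M^{-1}u_0\sigma_M = (-1)^{p(u)}u_0$, picking up a factor $\la(u)^{-1}$ (up to the sign absorbed by $p$), to rewrite $F(v_0\ga\sigma_M^{p(\ga)}u_0) = \la(u)^{-1}F(v_0 u_0 \ga\sigma_M^{p(\ga)}) = \la(u)^{-1}T(v_0u_0)$; the parity bookkeeping collapses to the advertised $p(u,v)$. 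The Kronecker $\delta_{\la(u)\la(v),1}$ appears because, as in (a), the whole expression vanishes unless $\la(u_0v_0) = \la(u)\la(v)$ equals $1$ (equivalently $\la(v)=\la(u)^{-1}$, which is what makes the factor $\la(u)^{-1}$ meaningful). In the {\sf Type II} case one does the identical manipulation with the extra $\suv$, which is even and commutes with everything in sight, so nothing changes.

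\medskip
For part (c): this is the only part where the two factors live on \emph{different} graded pieces, so $F$-cyclicity on a single space does not apply directly; instead I would go through the underlying vector-space trace. Write $T_{M_r}(u_n v_{-n}) = F\big(u_n v_{-n}\ga\sigma_M^{p(\ga)}\big)$ where $F$ here means the full supertrace (or $\suv$-twisted trace) over $M$; because $u_n v_{-n}\ga\sigma_M^{p(\ga)}$ preserves $M_r$ we may equally regard $F$ as the supertrace over $M_r$, and the composite $v_{-n}\big(\ga\sigma_M^{p(\ga)}u_n\big)$ preserves $M_{r+n}$. The general identity $\str(XY) = p(X,Y)\str(YX)$ for $X : M_{r+n}\to M_r$, $Y : M_r\to M_{r+n}$ (which holds because both $XY$ and $YX$ have well-defined, equal supertraces — this is just the graded trace of a nilpotent-off-diagonal block matrix) lets me cycle $u_n$ to the right: $T_{M_r}(u_nv_{-n}) = p\,\str_{M_{r+n}}\big(v_{-n}\ga\sigma_M^{p(\ga)}u_n\big)$, and then pull $u_n$ back through $\ga$ and $\sigma_M$ exactly as in (b), producing the factor $\la(u)^{-1}$ and leaving $\str_{M_{r+n}}(v_{-n}u_n\ga\sigma_M^{p(\ga)}) = T_{M_{r+n}}(v_{-n}u_n)$. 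The hypotheses $\mu(v)=\mu(u)^{-1}$ and $n\in[\eps_u]_{>0}$ are exactly what guarantee that $u_n : M_{r+n}\to M_r$ and $v_{-n} : M_r \to M_{r+n}$ are legitimate maps between the stated graded pieces (so that the degree shifts cancel and $v_{-n}u_n$ really is an endomorphism of $M_{r+n}$). The {\sf Type II} modification is again routine since $\suv$ is even and grade-preserving.

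\medskip
\textbf{Main obstacle.} Parts (a) and (b) are bookkeeping; the one place that needs care is part (c), where the trace-cyclicity argument must be made on the right space. The subtlety is that $\str_{M_r}$ and $\str_{M_{r+n}}$ are traces over \emph{different} finite-dimensional spaces, and one must justify that $\str(XY)$ computed on one equals $\str(YX)$ computed on the other — this is true but relies on $M_r$ and $M_{r+n}$ being finite dimensional (part of the positive-energy hypothesis) and on keeping the parities of $u_n$, $v_{-n}$, $\ga$, $\sigma_M$, and (in {\sf Type II}) $\suv$ straight so that the single sign $p(u,v)$ survives. I expect the sign/parity reconciliation, not any conceptual point, to be the fussiest part of writing this out.
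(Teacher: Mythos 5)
Your proposal is correct and follows essentially the same route as the paper: for (a) the paper likewise observes that $u_0=0$ when $\mu(u)\neq 1$ and defers the $\la(u)\neq 1$ case to the eigenvalue argument in the proof of Lemma \ref{htoordlemma}; for (b) it repeats the computation of that lemma with $(A,N)$ replaced by $(\en_{\C}(M_r),M_r)$; and for (c) it replaces $M_r$ and $M_{r+n}$ by their direct sum, which is exactly your block--off-diagonal supertrace identity. One small correction to your {\sf Type II} aside: $\suv$ is \emph{odd} (it interchanges the even and odd parts of $N$ and hence of $M$), not even as you assert, so it is not true that it ``commutes with everything in sight''; what keeps your argument intact is the conjugation relation $\suv\, u_n\, \suv = (-1)^{p(u)} u_n$, which you do state correctly and which is the only property of $\suv$ the computation actually uses.
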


\begin{proof}
We begin with (b). We observe that the work has already been done in the proof of Lemma \ref{htoordlemma}. Indeed $T_{M_0}$ is simply the $h$-supersymmetric function introduced in that lemma. For $r > 0$ we define the superalgebra $A^r$ to be $A^r = \en_{\C}(M_r)$ if $M$ is of {\sf Type I}, and $A^r = A^r_0 + \suv A^r_0$ where $A^r_0 = \en_{\C}((M_r)_0)$ if $M$ is of {\sf Type II}. Now the proof of Lemma \ref{htoordlemma} can be repeated, with the pair $(A, N)$ replaced by $(A^r, M_r)$, to establish (b).

In (c) we may replace $M_r$ and $M_{r+n}$ by their direct sum because $v_{-n}$ annihilates $M_{r+n}$ and $u_n$ annihilates $M_r$. The equation then follows from the same argument as for (b).

For (a): the operators $u_n$ are defined for $n \in \eps_u + \Z$, so if $\mu(u) \neq 1$ then $u_0 = 0$ and $T_{M_r}(u_0) = 0$. Once again the proof that $T_{M_r}(u_0) = 0$ only if $\la(u) = 1$ has been carried out for $r = 0$ in the proof of Lemma \ref{htoordlemma}, and the general case is the same.
\end{proof}


\vspace{.3cm}
\begin{prop} \label{a0b}
If $\mu(u) = 1$ then
\[
T_{M_r}(u_{([0])}v)_0 = [1 - \la(u)] (2\pi i)^{-1} T_{M_r}u_0 v_0.
\]
\end{prop}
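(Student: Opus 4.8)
\emph{Proof proposal.} The plan is to establish, as an operator identity on all of $M$ (not merely on the top level $M_0$), the module-level counterpart of the Zhu-algebra skew-symmetry $(\ref{skewsymm})$, and then to combine it with Proposition \ref{mula=0}(b). Concretely, I would first prove that under the hypothesis $\mu(u) = 1$,
\[
(u_{([0])}v)_0 = (2\pi i)^{-1}\bigl(u_0 v_0 - p(u, v)\, v_0 u_0\bigr)
\]
as operators on $M$.

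By linearity in $u$ and $v$, and since $g$ and $h$ preserve conformal weight, it suffices to take $u$ and $v$ homogeneous for $L_0$ (in addition to being $g,h$-eigenvectors of definite parity, as per our standing convention). If $\mu(v) \neq 1$ the identity is trivial: then $v_0 = 0$, since the conformal-weight-$0$ mode is not among the modes of $v$, while $\mu(u_{([0])}v) = \mu(u)\mu(v) = \mu(v) \neq 1$ forces $(u_{([0])}v)_0 = 0$ for the same reason, so both sides vanish (as does the right-hand side of the proposition). Assume therefore $\mu(u) = \mu(v) = 1$, hence $\eps_u = 0$. Using Definition \ref{Zhustructure},
\[
u_{([0])}v = (2\pi i)^{-1}\res_w (1+w)^{\D_u - 1} Y(u, w) v\, dw = (2\pi i)^{-1}\sum_{j \geq 0}\binom{\D_u - 1}{j}\, u_{(j)}v ,
\]
a finite sum; each $u_{(j)}v$ is $L_0$-homogeneous, so passing to conformal-weight-$0$ module modes term by term gives $(u_{([0])}v)_0 = (2\pi i)^{-1}\sum_{j \geq 0}\binom{\D_u - 1}{j}(u_{(j)}v)_0$. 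On the other hand, I would specialize the module Borcherds identity of Definition \ref{twisteddefn} to $n = 0$ and---using $\mu(u) = \mu(v) = 1$ to guarantee $0 \in [\eps_u]$ and $0 \in [\eps_v]$---also to $m = k = 0$; the right-hand side then collapses to a single term and one obtains $\sum_{j \geq 0}\binom{\D_u - 1}{j}(u_{(j)}v)_0 = u_0 v_0 - p(u, v)\, v_0 u_0$. Comparing the two expressions proves the displayed identity.

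Finally I would apply $T_{M_r}$ and invoke Proposition \ref{mula=0}(b) for both $T_{M_r}(u_0 v_0)$ and $T_{M_r}(v_0 u_0)$. If $\la(u)\la(v) \neq 1$ then both traces vanish by \ref{mula=0}(b), as does the right-hand side $[1 - \la(u)](2\pi i)^{-1}T_{M_r}(u_0 v_0)$. If $\la(u)\la(v) = 1$ then \ref{mula=0}(b) gives $T_{M_r}(v_0 u_0) = p(u, v)\la(v)^{-1}T_{M_r}(u_0 v_0) = p(u, v)\la(u)\, T_{M_r}(u_0 v_0)$, and substituting into $(2\pi i)^{-1}(T_{M_r}(u_0 v_0) - p(u, v)\, T_{M_r}(v_0 u_0))$ and using $p(u, v)^2 = 1$ yields $(2\pi i)^{-1}(1 - \la(u))\, T_{M_r}(u_0 v_0)$, as claimed. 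The step I expect to demand the most care is the index bookkeeping in the middle paragraph: confirming that the outer ``$_0$'' in $(u_{([0])}v)_0$ is the conformal-weight-$0$ module mode of $u_{([0])}v$ (so that the term-by-term passage through the expansion is legitimate), and that it matches the index $m + k = 0$ on the left of the module Borcherds identity. The remainder is a routine specialization of identities already recorded.
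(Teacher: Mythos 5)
Your proposal is correct and follows essentially the same route as the paper: both reduce $(u_{([0])}v)_0$ to $(2\pi i)^{-1}[u_0,v_0]$ via the change of variable $w = e^{2\pi i z}-1$ and the $n=0$ (commutator-formula) case of the module Borcherds identity with $m=k=0$, and then apply Proposition \ref{mula=0}(b) to convert $T_{M_r}[u_0,v_0]$ into $[1-\la(u)]T_{M_r}u_0v_0$. Your explicit treatment of the degenerate cases ($\mu(v)\neq 1$, $\la(u)\la(v)\neq 1$) is slightly more detailed than the paper's one-line dismissal, but the substance is identical.
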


\begin{proof}
Assume $\mu(v) = 1$ and $\la(v) = \la(u)^{-1}$, for otherwise both sides of the equation vanish and the result is trivially true.

The commutator formula (\ref{commutator}) with $m = k = 0$ is
\begin{align*}
[u_0, v_0]
= \sum_{j \in \Z_+} \binom{\D_u-1}{j} (u_{(j)}v)_0
= (\res_w (1+w)^{\D_u-1} Y(u, w)v dw)_0.
\end{align*}
Using the substitution $w = e^{2\pi i z} - 1$ gives
\begin{align*}
(u_{([0])}v)_0
&= (\res_z Y[u, z]v dz)_0
= \left( \res_z e^{2\pi i \D_u z} Y(u, e^{2\pi i z}-1)v dz \right)_0 \\
&= (2\pi i)^{-1} \left( \res_w (1+w)^{\D_u-1} Y(u, w)v dw \right)_0 \\
&= (2\pi i)^{-1} [u_0, v_0].
\end{align*}
Now we use Proposition \ref{mula=0} (b) to simplify
\begin{align*}
T_{M_r} [u_0, v_0]
&= T_{M_r} u_0 v_0 - p(u, v) T_{M_r} v_0 u_0 \\
&= T_{M_r} u_0 v_0 - \delta_{\la(v)\la(u), 1} p(u, v) \la(v)^{-1} T_{M_r} v_0 u_0 \\
&= [1 - \la(u)] T_{M_r} u_0 v_0.
\end{align*}
\end{proof}


\vspace{.3cm}
\begin{prop} \label{a-1b}
\begin{align} \label{pro24eq}
T_M((\res_z P^{\mu(u), \la(u)}(z, q) Y[u, z]v dz)_0 q^{L_0}) =
\left\{
\begin{array}{ll}
-T_M(u_0 v_0 q^{L_0}) & \text{if $(\mu(u), \la(u)) = (1, 1)$}, \\
0 & \text{otherwise}. \\
\end{array}
\right.
\end{align}
\end{prop}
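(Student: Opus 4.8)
The plan is to substitute the explicit formula (\ref{newPdef}) for $P^{\mu(u),\la(u)}(z,q)$ into the left side of (\ref{pro24eq}) and to evaluate its three pieces separately; throughout write $\mu=\mu(u)$, $\la=\la(u)$, $\eps=\eps_u$, and let $\delta$ be as in (\ref{newPdef}). First note that if $\mu(u)\mu(v)\neq 1$ then $u_{([k])}v$ has $g$-eigenvalue $\neq 1$ for every $k$, hence its zero mode on $M$ vanishes and both sides of (\ref{pro24eq}) are zero; so one may assume $\mu(v)=\mu(u)^{-1}$, and then $-n\in[\eps_v]$ exactly when $n\in[\eps_u]$. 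The key preliminary is an operator identity on $M$: using the substitution $w=e^{2\pi iz}-1$ together with the commutator formula (\ref{commutator}) read on $M$ (the $n=0$ case of the Borcherds identity of Definition \ref{twisteddefn}), one checks that for $n\in[\eps_u]$
\[
\Bigl(\res_z e^{2\pi inz}Y[u,z]v\,dz\Bigr)_0 \;=\; \frac{1}{2\pi i}\,[u_n,v_{-n}],
\]
because $\res_z e^{2\pi inz}Y[u,z]v\,dz=(2\pi i)^{-1}\res_w(1+w)^{n+\D_u-1}Y(u,w)v\,dw=(2\pi i)^{-1}\sum_{j\ge 0}\binom{n+\D_u-1}{j}u_{(j)}v$, and $\sum_j\binom{n+\D_u-1}{j}(u_{(j)}v)_0=[u_n,v_{-n}]$ on $M$; similarly the residue formula for $\circ_{-1}$ in Definition \ref{zhualgdefn} gives $\bigl(\res_z\frac{e^{2\pi i(1+\eps)z}}{e^{2\pi iz}-1}Y[u,z]v\,dz\bigr)_0=\frac{1}{2\pi i}(u\circ_{-1}v)_0$.

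Next I record the telescoping identity
\[
T_M\bigl([u_n,v_{-n}]\,q^{L_0}\bigr)\;=\;(1-\la q^n)\,T_M\bigl(u_nv_{-n}\,q^{L_0}\bigr)\qquad (n\in[\eps_u],\ n\neq 0).
\]
This follows from the cyclicity of $T_M$: summing Proposition \ref{mula=0}(c) over graded pieces and using $q^{L_0}u_nq^{-L_0}=q^{-n}u_n$ gives $T_M(u_nv_{-n}q^{L_0})=\la^{-1}p(u,v)q^{-n}T_M(v_{-n}u_nq^{L_0})$ for $n>0$; substituting $v_{-n}u_n=p(u,v)\bigl(u_nv_{-n}-[u_n,v_{-n}]\bigr)$ yields a linear equation for $T_M(u_nv_{-n}q^{L_0})$ whose solution is the displayed formula, and the case $n<0$ is the same argument applied to the pair $(v,u)$. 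The denominator $1-\la q^n$ never vanishes for $n\neq 0$ since $|q|<1$.

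Now substitute (\ref{newPdef}) into the left side of (\ref{pro24eq}) and treat the three terms. The constant term $\frac{2\pi i\delta}{1-\la}$ contributes $\frac{2\pi i\delta}{1-\la}\,T_M\bigl((u_{([0])}v)_0q^{L_0}\bigr)$, which by Proposition \ref{a0b} (applicable because $\delta\neq 0$ forces $\mu=1$) equals $\delta\,T_M(u_0v_0q^{L_0})$. The term $-2\pi i\frac{e^{2\pi i(1+\eps)z}}{e^{2\pi iz}-1}$ contributes $-T_M\bigl((u\circ_{-1}v)_0q^{L_0}\bigr)$. In the double sum one sums the geometric series $\sum_{m>0}(\la q^n)^m=\frac{\la q^n}{1-\la q^n}$ (absolutely convergent on each graded piece by positive energy, exactly as in \cite{Zhu} and \cite{DLMorbifold}) and applies the two preceding identities to obtain $\sum_{n\in[\eps_u]_{>0}}\la q^n\,T_M(u_nv_{-n}q^{L_0})+\sum_{n\in[\eps_u]_{<0}}T_M(u_nv_{-n}q^{L_0})$. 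It then remains to evaluate $T_M\bigl((u\circ_{-1}v)_0q^{L_0}\bigr)$: expanding $u\circ_{-1}v=\sum_{m\ge -1}\binom{\D_u+\eps}{m+1}u_{(m)}v$ and using the iterate (associativity) formula for the twisted module $M$ to express each $(u_{(m)}v)_0$ through products $u_jv_{-j}$, then regrouping by the $u$-mode index and separating the cases $n>0$, $n<0$, $n=0$, one arrives at
\[
T_M\bigl((u\circ_{-1}v)_0q^{L_0}\bigr)=\delta_{\mu,1}\,T_M(u_0v_0q^{L_0})+\sum_{n\in[\eps_u]_{>0}}\la q^n\,T_M(u_nv_{-n}q^{L_0})+\sum_{n\in[\eps_u]_{<0}}T_M(u_nv_{-n}q^{L_0}),
\]
where $\delta_{\mu,1}=1$ if $\mu=1$ and $0$ otherwise, the term $\delta_{\mu,1}u_0v_0$ being the contribution of the pole at $z=0$. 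Combining everything, the left side of (\ref{pro24eq}) equals $(\delta-\delta_{\mu,1})\,T_M(u_0v_0q^{L_0})$, and since $\delta-\delta_{\mu,1}=-\delta_{(\mu,\la),(1,1)}$ this is $-T_M(u_0v_0q^{L_0})$ when $(\mu,\la)=(1,1)$ and $0$ otherwise.

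The main obstacle is twofold. First, one must justify interchanging $T_M$ with the resummed series (\ref{newPdef})---this is where positive energy and the $C_2$-type finiteness are used, just as in \cite{Zhu} and \cite{DLMorbifold}. Second, and more substantively, one must carry out the iterate-formula bookkeeping for $(u\circ_{-1}v)_0$ so that it lines up with the geometric-series structure of $P^{\mu,\la}$ term by term; it is precisely this matching that produces the dichotomy in the statement, since the only boundary term that survives sits at the pole $z=0$, which $P^{\mu(u),\la(u)}(z,q)$ possesses exactly when $\mu(u)=1$, and that term is cancelled against the $\delta$-term unless in addition $\la(u)=1$.
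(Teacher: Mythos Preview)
Your argument is correct and uses the same ingredients as the paper's proof: the identity $(\res_z e^{2\pi inz}Y[u,z]v\,dz)_0=(2\pi i)^{-1}[u_n,v_{-n}]$, the cyclicity relation coming from Proposition~\ref{mula=0}(c), and the $n=-1$ case of the Borcherds identity to unpack $(u\circ_{-1}v)_0$. The organization differs: the paper extracts the $q^{h(M)+r}$ coefficient of both sides and works on a fixed graded piece $M_r$, where all sums are manifestly finite, obtaining the paper's equations (\ref{tel})--(\ref{P0reduction}) and then matching $-Y+Z$ directly. You instead keep the full $q$-series, sum the geometric series in $m$, and use the packaged identity $T_M([u_n,v_{-n}]q^{L_0})=(1-\la q^n)T_M(u_nv_{-n}q^{L_0})$ so that the factor $1-\la q^n$ cancels. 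Both routes lead to the same cancellation; the paper's has the advantage that your first ``obstacle'' (interchanging $T_M$ with the resummed series) simply does not arise, while yours makes the cancellation more transparent. Your ``iterate formula'' step for $(u\circ_{-1}v)_0$ is exactly the Borcherds identity on $M$ with $n=-1$, $m=1+\eps_u$, $k=-1-\eps_u$, which gives $(u\circ_{-1}v)_0=\sum_{j\ge0}\bigl(u_{\eps-j}v_{-\eps+j}+p(u,v)v_{-1-\eps-j}u_{1+\eps+j}\bigr)$; it would strengthen your write-up to state this explicitly rather than leaving it as ``regrouping''.
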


\begin{proof}
Let $\eps = \eps_u$, $\mu = \mu(u)$ and $\la = \la(u)$. Assume that $\mu(v) = \mu^{-1}$ and $\la(v) = \la^{-1}$, for otherwise both sides of the claimed equality vanish automatically.

For any $r \geq 0$, the $q^{h(M) + r}$ coefficient of
\[
T_M (\res_z P^{\mu, \la}(z, q) Y[u, z]v dz)_0 q^{L_0}
\]
is $X - Y + Z$, where
\begin{align} \label{a-1bvar}
\begin{split}
X &= \frac{2\pi i \delta}{1-\la} T_{M_r} (u_{([0])}v)_0, \\
Y &= 2\pi i T_{M_r} (\res_z \frac{e^{2\pi i (1+\eps) z}}{e^{2\pi i z}-1} Y[u, z]v dz)_0, \\
\text{and} \quad Z &= 2\pi i \sum_{m \in \Z_{>0}} \la^m \sum_{n \in [\eps]_{>0}} T_{M_{r-mn}} (\res_z e^{2\pi i nz} Y[u, z]v dz)_0 \\
&- 2\pi i \sum_{m \in \Z_{>0}} \la^{-m} \sum_{n \in [\eps]_{<0}} T_{M_{r+mn}} (\res_z e^{2\pi i nz} Y[u, z]v dz)_0
\end{split}
\end{align}
(the sum defining $Z$ is finite since terms with $|mn| > r$ contribute nothing).

Using the change of variable $w = e^{2\pi i z} - 1$ and the commutator formula, we have
\begin{align} \label{Pnreduction}
\begin{split}
\left( \res_z e^{2\pi i n z} Y[u, z]v dz \right)_0
&= (2\pi i)^{-1} \left( \res_w (1+w)^{n+\D_u-1} Y(u, w)v dw \right)_0 \\
&= (2\pi i)^{-1} \sum_{j \in \Z_+} \binom{n+\D_u-1}{j} (u_{(j)}v)_0
= (2\pi i)^{-1} [u_n, v_{-n}]
\end{split}
\end{align}
for any $n \in [\eps_u]$. Now let $n \in [\eps_u]_{>0}$. Proposition \ref{mula=0} (c) implies that
\begin{align*}
\sum_{m \in \Z_{>0}} \la^m T_{M_{r-mn}} u_n v_{-n}
&= p(u, v) \la^{-1} \sum_{m \in \Z_{>0}} \la^m T_{M_{r-(m-1)n}} v_{-n} u_{n} \\
&= p(u, v) \sum_{m \in \Z_+} \la^{m} T_{M_{r-mn}} v_{-n} u_{n},
\end{align*}
from which it follows that
\begin{align} \label{tel}
\sum_{m \in \Z_{>0}} \la^m T_{M_{r-mn}} [u_n, v_{-n}] = p(u, v) T_{M_r} v_{-n} u_{n}.
\end{align}
If $n \in [\eps_u]_{<0}$ then $-n \in [\eps_v]_{>0}$, and so
\begin{align} \label{tel2}
\begin{split}
\sum_{m \in \Z_{>0}} \la^{-m} T_{M_{r+mn}} [u_n, v_{-n}]
&= -p(u, v) \sum_{m \in \Z_{>0}} \la^{-m} T_{M_{r-m(-n)}} [v_{-n}, u_{n}] \\
&= -T_{M_r} u_n v_{-n}.
\end{split}
\end{align}
Combining (\ref{tel}) and (\ref{tel2}) yields
\begin{align} \label{therest}
\begin{split}
Z
&= \sum_{m \in \Z_{>0}} \la^m \sum_{n \in [\eps]_{>0}} T_{M_{r-mn}} [u_n, v_{-n}]
- \sum_{m \in \Z_{>0}} \la^{-m} \sum_{n \in [\eps]_{<0}} T_{M_{r+mn}} [u_n, v_{-n}] \\
&= \sum_{n \in [\eps]_{<0}} T_{M_r} u_n v_{-n} + p(u, v) \sum_{n \in [\eps]_{>0}} T_{M_r} v_{-n} u_n.
\end{split}
\end{align}

Next we have
\begin{align*}
\begin{split}
(\res_z \frac{e^{2\pi i (1+\eps) z}}{e^{2\pi i z} - 1} Y[u, z]v dz)_0
&= (\res_z \frac{e^{2\pi i (1+\eps) z}}{e^{2\pi i z} - 1} e^{2\pi i \D_u z} Y(u, e^{2\pi i z} - 1)v dz)_0 \\
&= (2\pi i)^{-1} (\res_w w^{-1} (1+w)^{\D_u+\eps} Y(u, w)v dw)_0 \\
&= (2\pi i)^{-1} \sum_{j \in \Z_+} \binom{\D_u+\eps}{j} (u_{(j-1)}v)_0.
\end{split}
\end{align*}
Plugging this into the Borcherds identity with $n = -1$ and $m = 1+\eps_u = -k$ yields
\begin{align} \label{P0reduction}
Y
= \sum_{j \in \Z_+} T_{M_r} \left( u_{-j+\eps} v_{j-\eps} + p(u, v) v_{-j-1-\eps} u_{j+1+\eps} \right).
\end{align}

Now we see that
\[
-Y + Z = \left\{
\begin{array}{ll}
-T_{M_r} a_0 b_0 & \text{if $\mu = 1$}, \\
0 & \text{otherwise}. \\
\end{array}
\right.
\]
If $\mu \neq 1$ then $X = \delta = 0$ and we are done, similarly if $\mu = \la = 1$. Finally suppose $\mu = 1 \neq \la$: from Proposition \ref{a0b} we have
\[
X - Y + Z = T_{M_r} u_0 v_0 - T_{M_r} u_0 v_0 = 0
\]
as required.
\end{proof}


In the two following lemmas suppose $(\mu(u), \la(u)) = (1, 1)$.
\vspace{.3cm}
\begin{prop} \label{trtodef1}
$T_M(\res_z \wp(z, q) Y[u, z]v, \tau) = 0$.
\end{prop}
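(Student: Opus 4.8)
The plan is to reduce the statement about $\wp$ to the identity already proved in Proposition~\ref{a-1b}, using the relation between $\wp$ and $P$ recorded in Section~\ref{modforms}. Recall that $\wp(z,q) = \partial_z P(z,q) - G_2(q)$, which follows from equation (\ref{delpwp}) after rearranging (note $\wp(z,q) + G_2(q) = \partial_z P(z,q)$, so $\wp(z,q) = \partial_z P(z,q) - G_2(q)$). Hence
\[
\res_z \wp(z,q) Y[u,z]v\, dz = \res_z \bigl(\partial_z P(z,q)\bigr) Y[u,z]v\, dz - G_2(q) \res_z P(z,q) Y[u,z]v\, dz.
\]
The second term is handled directly by Proposition~\ref{a-1b}: applying $T_M(\,\cdot\, q^{L_0})$ to $\bigl(\res_z P(z,q) Y[u,z]v\, dz\bigr)_0$ gives $-T_M(u_0 v_0 q^{L_0})$ since $(\mu(u),\la(u)) = (1,1)$. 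So multiplied by $-G_2(q)$ it contributes $G_2(q) T_M(u_0 v_0 q^{L_0})$.

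For the first term I would integrate by parts in the formal-residue sense: $\res_z (\partial_z P) Y[u,z]v\, dz = -\res_z P(z,q)\, \partial_z Y[u,z]v\, dz$, using the residue identity $\res_z f \partial_z g\, dz = -\res_z g\, \partial_z f\, dz$ from Section~\ref{definitions}. Now $\partial_z Y[u,z] = Y[L_{[-1]}u, z]$ by the final VOSA axiom applied to the Zhu structure, so $\res_z (\partial_z P) Y[u,z]v\, dz = -\res_z P(z,q) Y[L_{[-1]}u, z]v\, dz$. Note that $L_{[-1]}u$ still has $(\mu,\la) = (1,1)$ since $L_{[-1]}$ is built from $h$- and $g$-invariant operators, so Proposition~\ref{a-1b} applies again and yields $T_M\bigl((\res_z P(z,q) Y[L_{[-1]}u,z]v\, dz)_0 q^{L_0}\bigr) = -T_M\bigl((L_{[-1]}u)_0 v_0 q^{L_0}\bigr)$. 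Therefore the first term contributes $+T_M\bigl((L_{[-1]}u)_0 v_0 q^{L_0}\bigr)$.

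Combining, $T_M(\res_z \wp(z,q) Y[u,z]v, \tau)$ equals $T_M\bigl((L_{[-1]}u)_0 v_0 q^{L_0}\bigr) + G_2(q) T_M(u_0 v_0 q^{L_0})$. It remains to show this vanishes. The key input is that $L_{[-1]}u = 2\pi i(L_{-1} + L_0)u$, and the mode $(L_{[-1]}u)_0$ should be expressible so that $(L_{[-1]}u)_0 = 0$ as an operator --- indeed $L_{[-1]}u \in J_g$ (it is of the form $(L_{-1}+L_0)u$ with $\mu(u)=1$, up to the scalar $2\pi i$), so its image in $\zhu_g(V)$ vanishes, and more to the point one checks $(L_{[-1]}u)_{([0])} = 0$ acting between graded pieces; alternatively $(L_{[-1]}u)_0 = [L_{-1}+L_0, \cdot]$-type expression whose zero mode vanishes on a module. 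So the first term is zero. For the surviving $G_2(q) T_M(u_0 v_0 q^{L_0})$ term, I need to recheck the bookkeeping: most likely the $-G_2(q)$ coefficient should instead be absorbed so that the two contributions cancel. \emph{The main obstacle} is precisely this final accounting --- tracking the $G_2$ contributions and the precise normalization (factors of $2\pi i$) so that everything cancels to zero, and correctly justifying that $(L_{[-1]}u)_0$ annihilates $M$ (equivalently that the zeroth mode of $L_{[-1]}u$ vanishes, which is the module-level shadow of $L_{[-1]}u \in J_g$). I expect this to come out cleanly once the formal integration by parts is combined with the observation that, in the Zhu algebra, $\res_z \partial_z(\text{anything}) = 0$ at the level of zeroth modes on a module --- so in fact the first term is exactly $-G_2(q) T_M(u_0 v_0 q^{L_0})$ rather than $+T_M((L_{[-1]}u)_0 v_0 q^{L_0})$ once one tracks the $G_2$ piece hidden in $\partial_z P$ versus $\wp$, and the two cancel.
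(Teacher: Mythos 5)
Your overall strategy is the paper's own argument run in reverse (the paper starts from $0=T_M((L_{[-1]}u)_0v_0q^{L_0})$, applies Proposition \ref{a-1b} to $L_{[-1]}u$, uses $Y[L_{[-1]}u,z]=\partial_zY[u,z]$, integrates by parts, and lands on $\wp+G_2$), and your treatment of the $\partial_zP$ piece is correct: it contributes $T_M((L_{[-1]}u)_0v_0q^{L_0})$, which vanishes because $(L_{-1}w+L_0w)_n=-nw_n$ kills the zero mode. But there is a genuine error in your opening decomposition. From $\wp(z,q)=\partial_zP(z,q)-G_2(q)$ you should get
\begin{align*}
\res_z \wp(z,q)\,Y[u,z]v\,dz=\res_z\bigl(\partial_zP(z,q)\bigr)Y[u,z]v\,dz-G_2(q)\,\res_zY[u,z]v\,dz
=\res_z\bigl(\partial_zP\bigr)Y[u,z]v\,dz-G_2(q)\,u_{([0])}v,
\end{align*}
whereas you wrote the second term as $-G_2(q)\res_zP(z,q)Y[u,z]v\,dz$, inserting a spurious factor of $P(z,q)$. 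That mistake is what produces the unwanted surviving term $G_2(q)T_M(u_0v_0q^{L_0})$ and forces the hand-waving at the end.

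With the correct decomposition the finish is clean and uses a tool you never invoke: Proposition \ref{a0b}. Since $\la(u)=1$, that proposition gives $T_{M_r}\bigl((u_{([0])}v)_0\bigr)=[1-\la(u)](2\pi i)^{-1}T_{M_r}(u_0v_0)=0$, so the $G_2$ term contributes nothing; combined with the vanishing of the $\partial_zP$ term this proves the proposition. Your proposed patch --- that ``the first term is exactly $-G_2(q)T_M(u_0v_0q^{L_0})$'' so that the two contributions cancel --- is not correct: the first term really is $T_M((L_{[-1]}u)_0v_0q^{L_0})=0$, and the problem lies entirely in the miscomputed second term. As a smaller point, your justification that $(L_{[-1]}u)_0=0$ is vaguer than it needs to be; the clean statement is the mode identity $(L_{-1}w)_n=-(n+\D_w)w_n$, hence $((L_{-1}+L_0)w)_n=-nw_n$, valid on any twisted module, not just a consequence of $L_{[-1]}u\in J_g$.
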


\begin{proof}
For all $u \in V$ we have $(L_{[-1]}u)_0 = 2\pi i(L_{-1}u + L_0u)_0 = 0$. By Proposition \ref{a-1b} we have
\begin{align*}
0 &= T_M((L_{[-1]}u)_0 b_0 q^{L_0}) \\
&= T_M((\res_z P(z, q) Y[L_{[-1]}u, z]v dz)_0 q^{L_0}) \\
&= T_M((\res_z P(z, q) \partial_z Y[u, z]v dz)_0 q^{L_0}) \\
&= -T_M((\res_z \partial_z P(z, q) Y[u, z]v dz)_0 q^{L_0}) \\
&= -T_M((\res_z (\wp(z, q) + G_2(q)) Y[u, z]v dz)_0 q^{L_0}).
\end{align*}
By Proposition \ref{a0b}, the $G_2(q)$ term contributes nothing, so the result follows.
\end{proof}

\vspace{.3cm}
\begin{prop} \label{trtodef2}
$\left[(2\pi i)^2 q\frac{d}{dq} + \DD_u G_2(q)\right] S_M(u, \tau) = S_M(\res_z \zeta(z, q) L[z]u dz, \tau)$.
\end{prop}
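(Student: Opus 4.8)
The plan is to deduce Proposition \ref{trtodef2} from Proposition \ref{a-1b} in much the same spirit as the proof of Proposition \ref{trtodef1}, the new ingredient being that we must genuinely differentiate the supertrace with respect to $q$ rather than merely observe that something vanishes. First I would recall the relation between $P(z,q)$, the Weierstrass $\zeta$-function, and $G_2$, namely $\zeta(z,q) = -P(z,q) + zG_2(q) - \pi i$, so that $\res_z \zeta(z,q) L[z]u\,dz = -\res_z P(z,q)L[z]u\,dz + G_2(q)\res_z z L[z]u\,dz$ (the constant $-\pi i$ drops out under $\res_z$). The key algebraic identities are $\res_z L[z]u\,dz = (L_{[-1]})\cdot$-type expressions; more precisely $(\res_z z L[z]u\,dz)_0 = (L_{[0]}u)_0 = \DD_u\, u_0$ when $u$ is $L_{[0]}$-homogeneous, since $L_{[-1]}u$ contributes a mode $(L_{[-1]}u)_0 = 0$. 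Hence the $G_2(q)$-term on the right-hand side contributes exactly $G_2(q)\DD_u S_M(u,\tau)$, which will cancel against the $\DD_u G_2(q)$ term on the left, reducing the claim to
\[
(2\pi i)^2 q\frac{d}{dq} S_M(u,\tau) = -S_M(\res_z P(z,q) L[z]u\,dz, \tau),
\]
i.e.\ the equivalent form (\ref{ax4'}) of {\bf CB4} specialized to $S_M$.

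Next I would compute the left-hand side directly. Since $L_0$ acts on $M_r$ as $h(M)+r$ and commutes with $u_0$, $\ga$, $\sigma_M$, $\suv$, we have $S_M(u,\tau) = \sum_r T_{M_r}(u_0)\, q^{h(M)+r-\mathfrak{c}/24}$, so
\[
(2\pi i)^2 q\frac{d}{dq} S_M(u,\tau) = (2\pi i)^2 \sum_r (h(M)+r-\tfrac{\mathfrak{c}}{24}) T_{M_r}(u_0)\, q^{h(M)+r-\mathfrak{c}/24} = (2\pi i)^2 T_M\big((L_0 - \tfrac{\mathfrak{c}}{24})u_0\, q^{L_0}\big)q^{-\mathfrak{c}/24}.
\]
So I must identify $(2\pi i)^2 T_M((L_0-\mathfrak{c}/24)u_0 q^{L_0})$ with $-T_M((\res_z P(z,q)L[z]u\,dz)_0 q^{L_0})$. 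Now apply Proposition \ref{a-1b} with the roles of $u$ and $v$ played by $\tilde\om$ and $u$: since $(\mu(\tilde\om),\la(\tilde\om)) = (1,1)$, equation (\ref{pro24eq}) gives
\[
T_M\big((\res_z P(z,q) Y[\tilde\om,z]u\,dz)_0 q^{L_0}\big) = -T_M\big(\tilde\om_0 u_0 q^{L_0}\big).
\]
But $\tilde\om_0 = L_{[0]} = L_{[1]}\cdot$... more carefully, $Y[\tilde\om,z] = L[z] = \sum L_{[n]}z^{-n-2}$, and in conformal-weight indexing $\tilde\om_0 = L_{[0]}$. The relation $L_{[0]} = L_0 - \sum_{j>0}\frac{(-1)^j}{j(j+1)}L_j$ from Definition \ref{Zhustructure} then lets me replace $\tilde\om_0 u_0 = L_{[0]}u_0$. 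Using $[L_0,u_0]=0$ and, for $j>0$, the commutator formula to push $L_j$ past $u_0$ (note $[L_j,u_0] = \sum_i \binom{j+1}{i}(\om_{(i)}u)_0$, which involves only modes $(\cdot)_0$ of vectors of strictly smaller conformal weight), I would simplify $T_M(L_{[0]}u_0 q^{L_0})$; the point is that $L_{[0]}$ acts on the left and, since $T_M$ is a (twisted) trace, $T_M(L_{[0]}u_0 q^{L_0}) = T_M(u_0 L_{[0]} q^{L_0}) + (\text{lower-weight correction terms})$. Actually the cleaner route: on $M$ one has the genuine operator $L_0$ and $T_M(L_j\, (\cdot)\, q^{L_0})$ can be cycled since $q^{L_0}L_j q^{-L_0} = q^{-j}L_j$, giving a geometric-type resummation identical to the one in Proposition \ref{a-1b}. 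I expect the $\sum_{j>0}\frac{(-1)^j}{j(j+1)}$ terms to reassemble into precisely the difference between $L_{[0]}$ and $L_0$ so that $(2\pi i)^2 T_M(\tilde\om_0 u_0 q^{L_0})$ matches $(2\pi i)^2 T_M((L_0 - \mathfrak{c}/24 + \text{const})u_0 q^{L_0})$ after accounting for the shift $\tilde\om = (2\pi i)^2(\om - \frac{\mathfrak c}{24}\vac)$.

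The main obstacle, as usual in these Zhu-type arguments, is the careful bookkeeping with the $(2\pi i)$ factors and the shift by $\mathfrak{c}/24$: one must match $Y[\tilde\om,z]u = L[z]u$ against $\zeta(z,q)L[z]u$, track that $\tilde\om_0 = L_{[0]}$ versus $L_0$, and confirm that the constant term $-\frac{\mathfrak c}{24}$ built into $\tilde\om$ produces exactly the $q^{L_0 - \mathfrak c/24}$ normalization appearing in $S_M$ rather than an extraneous scalar. A secondary subtlety is that $P(z,q)$ has a pole $-z^{-1}$ at $z=0$, so $\res_z P(z,q)L[z]u\,dz$ picks up both the $-z^{-1}$ term paired with the $z^1$-coefficient of $L[z]u$ (i.e.\ $L_{[-1]}u$, whose $0$-mode vanishes) and the regular terms $P_k(q)$ paired with $L_{[k]}u$; I would check that the vanishing $(L_{[-1]}u)_0 = 0$ is what makes the naive manipulation legitimate, exactly as in Proposition \ref{trtodef1}. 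Once these constants are pinned down the identity falls out of Proposition \ref{a-1b} applied to the pair $(\tilde\om, u)$ together with the defining relation $\frac{\partial}{\partial z}P(z,q) = \wp(z,q)+G_2(q)$ used in the reverse direction.
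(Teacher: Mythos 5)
Your overall strategy --- apply Proposition \ref{a-1b} to the pair $(\tilde{\om}, u)$, and use $\zeta(z,q) = -P(z,q) + zG_2(q) - \pi i$ together with $(L_{[-1]}u)_0 = 0$ and $(L_{[0]}u)_0 = \DD_u u_0$ to account for the $G_2$ and constant terms --- is exactly the paper's, and your first two paragraphs are correct. The problem is in the third paragraph, at the decisive step. The operator $\tilde{\om}_0$ appearing on the right-hand side of (\ref{pro24eq}) is the zero mode of the \emph{ordinary} field $Y^M(\tilde{\om},z)$ on $M$ in conformal-weight indexing, not the zero mode $L_{[0]}$ of the square-bracket field $Y[\tilde{\om},z]$ (which acts on $V$, not on $M$). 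Since $\tilde{\om} = (2\pi i)^2(\om - \tfrac{\mathfrak{c}}{24}\vac)$ and, in conformal-weight indexing, $\om_0 = \om_{(1)} = L_0$ while $\vac_0 = \vac_{(-1)} = I_M$, one has simply $\tilde{\om}_0 = (2\pi i)^2(L_0 - \mathfrak{c}/24)$ on $M$, with no correction terms. With this, Proposition \ref{a-1b} immediately gives
\[
T_M\bigl((\res_z P(z,q) L[z]u\, dz)_0\, q^{L_0}\bigr) = -(2\pi i)^2\, T_M\bigl((L_0 - \tfrac{\mathfrak{c}}{24})\, u_0\, q^{L_0}\bigr),
\]
which, after multiplying by $q^{-\mathfrak{c}/24}$, is precisely $-(2\pi i)^2 q\frac{d}{dq}S_M(u,\tau)$, completing the argument.

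By writing $\tilde{\om}_0 = L_{[0]} = L_0 - \sum_{j>0}\frac{(-1)^j}{j(j+1)}L_j$ you have conflated the two indexings, and the ensuing plan --- pushing the $L_j$ past $u_0$ with the commutator formula, cycling the trace, and ``expecting'' a geometric resummation to reassemble into $L_0 - \mathfrak{c}/24$ --- is both unnecessary and left unproven. It is exactly the point where the $-\mathfrak{c}/24$ shift must appear, and on your route it cannot: $L_{[0]}$ contains no such constant. The shift comes from the $\vac$ component of $\tilde{\om}$, whose zero mode in conformal-weight indexing is the identity (because $\vac$ has conformal weight $0$, so $\vac_0 = \vac_{(-1)} = I$). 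So, although everything surrounding it is sound and the architecture matches the paper's proof, the proposal has a genuine gap at its key step.
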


\begin{proof}
We start with Equation (\ref{pro24eq}). Multiply through by $q^{-\mathfrak{c}/24}$ and substitute $u = \tilde{\om} = (2\pi i)^2 (\om - \mathfrak{c}/24 \vac)$, so that $u_0 = (2\pi i)^2 (L_0 - \mathfrak{c}/24)$. The right hand side is
\[
-(2\pi i)^2 T_M((L_0 - \mathfrak{c}/24) v_0 q^{L_0-\mathfrak{c}/24}) = -(2\pi i)^2 q \frac{d}{dq} T_M(v_0 q^{L_0-\mathfrak{c}/24}).
\]
The left hand side is
\begin{align*}
T_M((\res_z P(z, q) L[z]v dz)_0 q^{L_0-\mathfrak{c}/24})
= {} & T_M((\res_z [-\zeta(z, q) + zG_2(q) - \pi i] L[z]v dz)_0 q^{L_0-\mathfrak{c}/24}) \\
= {} & G_2(q) \DD_v T_M(v_0 q^{L_0-\mathfrak{c}/24}) \\
& -T_M((\res_z \zeta(z, q) L[z]v dz)_0 q^{L_0-\mathfrak{c}/24}),
\end{align*}
having used $\tilde{\om}_{([1])}v = \DD_v b$ and Proposition \ref{a0b}.
\end{proof}


\section{Exhausting a conformal block by supertrace functions} \label{exhaust}

Let $\phi, \psi \in \C$. We shall say $\phi$ is \emph{lower} than $\psi$ (and $\psi$ is \emph{higher} than $\phi$) if the real part of $\phi$ is strictly less than that of $\psi$.

Let $S(u, \tau) \in \CC(g, h)$. In this section we show that $S$ may be written as a linear combination of supertrace functions $S_M(u, \tau)$ for $M \in P_h(g, V)$. We need the following proposition.

\vspace{.3cm}
\begin{prop} \label{induc}
Let $S \in \CC(g, h)$ with Frobenius expansion (\ref{canonform}).
\begin{itemize}
\item Let $j \in \{1, 2, \ldots b(p)\}$, then $C_{p, j, 0}((\om - \frac{\mathfrak{c}}{24} - \la_{p, j}) * u) = 0$ for all $u \in V^g$.

\item Let $j \in \{1, 2, \ldots b(p-1)\}$, then $C_{p-1, j, 0}((\om - \frac{{\mathfrak{c}}}{24} - \la_{p-1, j})^2 * u) = 0$ for all $u \in V^g$.
\end{itemize}
\end{prop}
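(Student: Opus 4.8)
The plan is to extract the stated vanishing statements from the fact that $[\om]$ is central in $\zhu_g(V)$ (Theorem~\ref{zhudefn}) together with the structure of the Frobenius expansion (\ref{canonform}) and the characterization of the leading coefficients in Proposition~\ref{cpistrace}. The basic mechanism is as follows: applying the operator $(2\pi i)^2 q\frac{d}{dq}$ to $S(u,\tau)$ acts on the Frobenius expansion by multiplying the $q^{\la_{p,j}+n/|G|}(\log q)^i$ coefficient by (a constant times) $(\la_{p,j} + n/|G| + i\text{-shift})$, while {\bf CB4} (in the form (\ref{ax4'})) relates $(2\pi i)^2 q\frac{d}{dq}S(u,\tau)$ to $-S(\res_z P(z,q)L[z]u\, dz, \tau)$. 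Passing to the leading $q^{\la_{p,j}}$, $(\log q)^p$ coefficient, the inhomogeneous term $P(z,q)L[z]u$ contributes via $[q^0]:P(z,q) = -2\pi i \frac{e^{2\pi i z}}{e^{2\pi i z}-1}$ (cf.\ the computation of $[q^0]:\ide_3$ in the proof of Proposition~\ref{cpistrace}), which on $L[z]u$ produces, up to lower-order terms killed by the degree bookkeeping, the operation $u \mapsto \tilde\om_{([-1])}u = \om * u$ modulo the factor $(2\pi i)^2$, together with the term $\tilde\om_{([0])}u = 2\pi i(L_{-1}+L_0)u$ which lands in $J_g$ and is killed by $f = C_{p,j,0}$ (Proposition~\ref{cpistrace}). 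Also $L[z]$ contributes the $-\mathfrak c/24$ shift from $\tilde\om = (2\pi i)^2(\om - \tfrac{\mathfrak c}{24}\vac)$.

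Concretely, for the first bullet, I would isolate the $(\log q)^p$ component of {\bf CB4}: since $S_p(u,\tau) = \sum_j q^{\la_{p,j}} S_{p,j}(u,\tau)$ is the top $\log q$ power, there is no $(\log q)^{p+1}$ term for $q\frac{d}{dq}$ to produce, so the $(\log q)^p$ part of the differential equation is closed. Extracting the $q^{\la_{p,j}}$ coefficient (the lowest power in $S_{p,j}$, with coefficient $C_{p,j,0}$) and using that the inhomogeneous term $\res_z\zeta(z,q)L[z]u\,dz$ at $q^0$ reproduces (up to $J_g$ and up to the $G_2$ term, which by Proposition~\ref{a0b}-type reasoning here contributes a $\DD_u G_2$ that cancels against the explicit $\DD_u G_2(q)$ term on the left of (\ref{ax4})) the map $u \mapsto \om_{([-1])}u = [\om]*u$ on $\zhu_g(V)$, I get an identity of the shape
\[
(2\pi i)^2\,\la_{p,j}\, C_{p,j,0}(u) = (2\pi i)^2\, C_{p,j,0}((\om - \tfrac{\mathfrak c}{24})*u)
\]
for all $u \in V^g$ (the restriction to $V^g$, i.e.\ $\mu(u)=1$, is exactly where $\ide_2$ / the $P^{1,1}$ case applies rather than $\ide_3$). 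Rearranging gives the first bullet. For the second bullet, I would run the same argument one $\log q$ level down: the $(\log q)^{p-1}$ component of {\bf CB4} is no longer closed — it receives a contribution from differentiating the $(\log q)^p$ piece, which brings in $S_p$ and hence, after one more application, a factor forcing $(\om - \tfrac{\mathfrak c}{24} - \la_{p-1,j})^2$ rather than the first power, with the ``cross term'' involving $S_p$ being killed because its own leading coefficient already satisfies the first bullet. This is the standard Frobenius-iteration phenomenon: a $(\log q)^p$ term in the solution forces the indicial-type operator to have a rank-$\geq 2$ generalized eigenspace.

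The main obstacle I expect is bookkeeping the ``lower order terms'': in (\ref{ax4}) the right side is $S(\res_z\zeta(z,q)L[z]u\,dz,\tau)$, and $L[z]u = \sum_n (\tilde\om_{([n])}u)\,z^{-n-2}$ has many modes, so $\res_z\zeta(z,q)L[z]u\,dz$ is a sum over $k$ of $-G_{k}(q)$ (or $G_2$) times $\tilde\om_{([k-1])}u$; one must check carefully that at $q^0$ only the $\zeta$-pole term survives in the relevant way and that the higher $\tilde\om_{([k-1])}u$ with $k\geq 2$, which lower the $\DD$-weight, contribute to $C_{p,j,n}$ with $n>0$ rather than to $C_{p,j,0}$ — so they drop out of the leading-coefficient identity. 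Equivalently one uses that $C_{p,j,0}$ is determined purely by the $q^{\la_{p,j}}$ coefficient and matching powers of $q$ forces only the $[q^0]$ part of the differential operator to act. Getting the constant $(2\pi i)^2$ and the $-\mathfrak c/24$ shift right, and confirming that the $\om * u$ product (as opposed to $u * \om$) is what appears — harmless since $[\om]$ is central — completes the argument; the second bullet then follows by the same computation applied to $C_{p-1,j,0}$, carrying the extra inhomogeneous contribution from the $(\log q)^p$ sector and invoking the first bullet to annihilate it.
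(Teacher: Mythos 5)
Your proposal is correct and follows essentially the same route as the paper: extract the $(\log q)^p$ (resp.\ $(\log q)^{p-1}$) component of {\bf CB4} in the form (\ref{ax4'}), equate the $q^{\la_{p,j}}$ coefficients, identify the $[q^0]$ part of $-\res_z P(z,q)L[z]u\,dz$ with $(2\pi i)^2(\om-\tfrac{\mathfrak{c}}{24})*u$, and for the second bullet use the cross-term $p\,S_{p,j}$ from differentiating the top log power together with the first bullet. The only differences are cosmetic (the paper works directly with the $P$-form rather than cancelling the $G_2$ terms in (\ref{ax4}), and computes the residue as a $\circ_{-1}$ product outright rather than sorting modes into $J_g$).
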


\begin{proof}
Recall equation (\ref{ax4'}) -- the equivalent form of {\bf CB4}. Equating coefficients of $\log^p q$ shows that (\ref{ax4'}) holds with $S_{p, j}$ in place of $S$, that is
\begin{align} \label{ax4''}
(2\pi i)^2 q\frac{d}{dq} S_{p, j}(u, \tau) = -S_{p, j}(\res_z P(z, q) L[z]u dz, \tau).
\end{align}
Let us equate coefficients of $q^{\la_{p, j}}$ in (\ref{ax4''}). The left hand side gives $(2\pi i)^2 \la_{pj} C_{p, j, 0}(u)$, while the right hand side gives $C_{p, j, 0}$ applied to
\begin{align*}
2\pi i \res_z \frac{e^{2\pi i z}}{e^{2\pi i z}-1} L[z]u dz
= {} & \res_w w^{-1} (1+w)^{\D_\om} Y((2\pi i)^2\om, w)u dw \\
& - (\mathfrak{c}/24) \res_w w^{-1} (1+w)^{\D_{\vac}} Y((2\pi i)^2 \vac, w)u dw \\
= {} & (2\pi i)^2 (\om - \mathfrak{c}/24) * u.
\end{align*}
This proves the first part.

Without loss of generality let $\la_{p, j} = \la_{p-1, j}$. Equating coefficients of $\log^{p-1} q$ in (\ref{ax4'}) yields
\begin{align} \label{2topeqn}
(2\pi i)^2 q \frac{d}{dq} S_{p-1, j}(u, \tau) = -S_{p-1, j}(\res_z P(z, q) L[z]u dz, \tau) - p (2\pi i)^2 S_{p, j}(u, \tau).
\end{align}
Equating coefficients of $q^{\la_{p-1, j}}$ yields
\begin{align*}
C_{p-1, j, 0}((\om - \mathfrak{c}/24 - \la_{p-1, j}) * u) = p C_{p, j, 0}(u).
\end{align*}
This, together with the first part of the proposition, implies the second part.
\end{proof}

Let $S$ have the Frobenius expansion (\ref{canonform}). Following Section \ref{symmfunc} we have
\[
C_{p, j, 0}(u) = \sum_{N} \alpha_N T_N(u)
\]
for some constants $\alpha_N \in \C$. The sum runs over the $h$-invariant irreducible $\zhu_g(V)$-modules $N$. Proposition \ref{induc} implies that $\al_N$ is nonzero only for $N$ that satisfy $\om|_N = \la_{p, j} + \mathfrak{c}/24$. Now consider
\[
\sum_{N} \alpha_N S_{L(N)}(u, \tau) \in q^{\la_{p, j}} \C[[q^{1/|G|}]].
\]
The coefficient of $q^{\la_{p, j}}$ is nothing but $C_{p, j, 0}$. Therefore the series
\[
S'(u, \tau) = S_{p, j}(u, \tau) - \sum_N \alpha_N S_{L(N)}(u, \tau)
\]
has lowest power of $q$ whose exponent is higher than $\la_{p, j}$.

The coefficient of the lowest power of $q$ in $S'(u, \tau)$ descends to a $h$-supersymmetric function on $\zhu_g(V)$, so we may write it as a linear combination of $T_N$. The modules $N$ that occur must be different than the ones used in the first iteration because $\om$ acts on them by some constant higher than $\la_{p, j} + \mathfrak{c}/24$. We subtract the corresponding $S_{L(N)}(u, \tau)$ as before and repeat. The process terminates because there are only finitely many irreducible $\zhu_g(V)$-modules. We obtain $S_{p, j}(u, \tau)$ as a linear combination of $S_M(u, \tau)$. It follows that $S_{p, j}(u, \tau) \in \CC(g, h)$.

We may repeat the argument above, using the second part of Proposition \ref{induc}, to conclude that $S_{p-1, j}(u, \tau) \in \CC(g, h)$ also. Hence $S_{p-1, j}$ satisfies (\ref{ax4''}) in addition to (\ref{2topeqn}). Together these equations imply $p = 0$. Thus $S = \sum_j S_{p, j}$ is a linear combination of supertrace functions.

In summary we have following explicit description of conformal blocks.
\vspace{.3cm}
\begin{thm} \label{trsp}
Let $V$ be a $C_2$-cofinite VOSA with rational conformal weights and $G$ a finite group of automorphisms of $V$. Suppose $\zhu_g(V)$ is semisimple for each $g \in G$. Fix commuting $g, h \in G$. For $M$ a $h$-invariant positive energy $g$-twisted $V$-module, select $\ga : M \rightarrow M$ satisfying
\begin{itemize}
\item $\ga^{-1} u_n \ga x = h(u)_n x$ for all $u \in V$, $n \in [\eps_u]$, $x \in M$, and

\item if $M$ is of {\sf Type II}, $A = A_0[\suv]/(\suv^2=1)$ is the simple component of $\zhu_g(V)$ corresponding to $M$, and $h(\suv) = (-1)^p \suv$, then additionally $\ga$ has parity $p$.
\end{itemize}
Then a basis of the space $\CC(g, h)$ of conformal blocks is the set of functions
\begin{align*}
S_{M}(u, \tau) = \left\{ \begin{array}{ll}
\str_{M} \left( u_0 \ga \sigma_{M}^{p(\ga)} q^{L_0 - \mathfrak{c}/24} \right) & \text{if $M$ is of {\sf Type I}}, \\
\tr_{M} \left( u_0 \ga \sigma_{M}^{p(\ga)} \xi q^{L_0 - \mathfrak{c}/24} \right) & \text{if $M$ is of {\sf Type II}}, \\
\end{array}\right.
\end{align*}
as $M$ runs over the set of $h$-invariant positive energy $g$-twisted $V$-modules.
\end{thm}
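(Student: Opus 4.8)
The statement combines several strands already developed in the paper, so the plan is essentially to assemble them. First I would invoke Theorem~\ref{tf-in-cb}: for each $h$-invariant irreducible positive energy $g$-twisted $V$-module $M$, the supertrace function $S_M$ lies in $\CC(g,h)$, and moreover the collection $\{S_M\}$ is linearly independent. This handles one inclusion and half of the basis claim. Next, to reduce from arbitrary $h$-invariant positive energy $g$-twisted modules to the irreducible ones, I would use the $g$-rationality hypothesis (or, equivalently here, semisimplicity of $\zhu_g(V)$ together with the functorial bijection of Theorem~\ref{functorial}): every positive energy $g$-twisted $V$-module decomposes as a direct sum of irreducibles, and both $S_M$ and the choice of $\ga$ (via Definition~\ref{choosinggamma} applied to the corresponding simple component of $\zhu_g(V)$) behave additively over such a decomposition. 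Hence the span of $\{S_M : M \text{ any } h\text{-invariant positive energy module}\}$ equals the span of $\{S_M : M \text{ irreducible}\}$, and linear independence of the latter is exactly what makes them a basis of their span.

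**The spanning direction.** The remaining and more substantial point is that the $S_M(u,\tau)$ for irreducible $M$ \emph{span} all of $\CC(g,h)$; this is the content of Section~\ref{exhaust}. The argument there proceeds by examining the Frobenius expansion~(\ref{canonform}) of an arbitrary $S \in \CC(g,h)$. By Proposition~\ref{cpistrace} the top-$\log$, lowest-$q$ coefficient $C_{p,j,0}$ descends to an $h$-supersymmetric function on $\zhu_g(V)$; by the classification of such functions (Lemma~\ref{htoordlemma} and the decomposition over $h$-invariant simple components in Section~\ref{symmfunc}) it is a combination $\sum_N \alpha_N T_N$ over irreducible $\zhu_g(V)$-modules $N$, and Proposition~\ref{induc} pins down which $N$ can occur (those with the right $\om$-eigenvalue). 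Subtracting $\sum_N \alpha_N S_{L(N)}$ strictly raises the lowest exponent of $q$; since the relevant exponents lie in a single coset of $\tfrac1{|G|}\Z$ shifted by eigenvalues of $\om$ on the finitely many irreducibles, this process terminates, exhibiting each $S_{p,j}$ as a combination of supertrace functions. A parallel argument with the second part of Proposition~\ref{induc} handles $S_{p-1,j}$, and comparing~(\ref{ax4''}) with~(\ref{2topeqn}) forces $p = 0$, so $S = \sum_j S_{p,j}$ is itself a combination of supertrace functions.

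**Main obstacle.** The delicate step is the termination of the subtraction process and, underlying it, the bookkeeping of exactly which $\zhu_g(V)$-modules contribute at each stage — this is where the $C_2$-cofiniteness (finite dimensionality of $\zhu_g(V)$), semisimplicity, and the precise form of Proposition~\ref{induc} all get used together. One must check that after each subtraction the new leading coefficient is again $h$-supersymmetric (it is, being a coefficient of a conformal block, by Proposition~\ref{cpistrace} applied to $S'$) and that the modules newly appearing are genuinely new, because $\om$ acts on them by a strictly higher scalar; finiteness of the module set then closes the induction. Everything else — the inclusion $S_M \in \CC(g,h)$, linear independence, additivity over direct sums, and the identification of the two displayed formulas with the general $T_M(u_0 q^{L_0 - \mathfrak{c}/24})$ — is already in place from the earlier sections, so the proof is mostly a matter of citing Theorems~\ref{tf-in-cb} and~\ref{functorial} and spelling out the exhaustion argument of Section~\ref{exhaust}.
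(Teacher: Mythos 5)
Your proposal is correct and follows essentially the same route as the paper: membership in $\CC(g,h)$ and linear independence come from Theorem~\ref{tf-in-cb}, and the spanning direction is exactly the exhaustion argument of Section~\ref{exhaust} (Frobenius expansion, Proposition~\ref{induc} pinning down the admissible $\om$-eigenvalues, iterated subtraction terminating by finiteness of the irreducible $\zhu_g(V)$-modules, and the comparison of~(\ref{ax4''}) with~(\ref{2topeqn}) forcing $p=0$). Your extra remark reducing arbitrary $h$-invariant positive energy modules to irreducible ones by additivity is a sensible reading of the statement and does not change the argument.
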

Combining Theorem \ref{trsp} with Theorem \ref{sl2zinv} on modular invariance of conformal blocks yields Theorem \ref{mythm}.


\section{The neutral free fermion VOSA} \label{ex1}

The VOSA $V = F(\vp)$ studied in this section is well known and goes by several names, we call it the neutral free fermion VOSA. It is defined (\cite{Kac}, pg. 98) to be the vector superspace spanned by the monomials
\[
\vp_{n_1} \cdots \vp_{n_s} \vac,
\]
where $n_i \in 1/2 + \Z$, $n_1 < \ldots < n_s < 0$, and the monomial has parity $s \bmod 2$. The VOSA structure is generated by the single odd field
\[
Y(\vp, z) = \sum_{n \in \Z} \vp_{(n)} z^{-n-1} = \sum_{n \in 1/2 + \Z} \vp_n z^{-n-1/2}.
\]
The action of the modes on $V$ is by left multiplication, subject to the relations $\vp_n \vac = 0$ for $n > 0$ and the commutation relation
\[
\vp_m \vp_n + \vp_n \vp_m = \delta_{m, -n} \quad \Longleftrightarrow \quad [Y(\vp, z), Y(\vp, w)] = \delta(z, w).
\]
The Virasoro vector is
\[
\om = \frac{1}{2} \vp_{-3/2} \vp_{-1/2} \vac.
\]
The element $\vp = \vp_{-1/2} \vac$ has conformal weight $1/2$, and the central charge of $V$ is $\mathfrak{c} = 1/2$. This VOSA is $C_2$-cofinite.


Let $G = \{1, \sigma_V\} \cong \Z/2\Z$. In this section we explicitly compute the conformal blocks evaluated on the vector $u = \vac$ and $u = \vp$. To do so we determine the $g$-twisted Zhu algebras (see Section \ref{coeffs}) and their modules, and then write down the supertace functions of Theorem \ref{trsp}.

Let $g = \sigma_V$. We have $\eps_\vp = -1/2$ and
\[
\vp \circ_n v = \res_w w^n (1 + w)^{1/2-1/2} Y(\vp, w)v dw = \vp_{(n)}v
\]
(Definitions \ref{twisteddefn} and \ref{zhualgdefn}). Therefore $J_{\sigma_V}$ contains all monomials except $\vac$, hence $\zhu_{\sigma_V}(V)$ is either $\C \vac$ or $0$. Since $V$ itself is a positive energy $\sigma_V$-twisted $V$-module and $\V_0 = \C \vac$ we have $\zhu_{\sigma_V}(V) = \C \vac$. The unique irreducible $\zhu_{\sigma_V}(V)$-module is $N = \C$ and the corresponding $\sigma_V$-twisted $V$-module is $L(N) = V$.

Let $h = 1$. Then we can take $\ga = 1$. Hence $\CC(\sigma_V, 1; u)$ is spanned by
\[
\str_V u_0 q^{L_0 - \mathfrak{c}/24}.
\]
Let $h = \sigma_V$. Although this automorphism acts on $\zhu_{\sigma_V}(V)$ as the identity and so $\ga|_N = N$, the extension of $\ga$ to $V$ according to equation (\ref{intertwine}) is $\sigma_V$. Hence $\CC(\sigma_V, \sigma_V; u)$ is spanned by
\[
\str_V u_0 \sigma_V q^{L_0 - \mathfrak{c}/24} = \tr_V u_0 q^{L_0 - \mathfrak{c}/24}.
\]

Let $g = 1$. We have
\[
\vp \circ_n v = \res_w w^n (1+w)^{1/2} Y(\vp, w)v dw,
\]
so in $\zhu_1(V)$ any mode $\vp_{(n)}$ for $n \leq -2$ is a linear combination of modes $\vp_{(k)}$ for $k > n$. Hence $\zhu_1(V)$ is a quotient of $\C \vac + \C \vp$. In fact $\zhu_1(V) = \C \vac + \C \vp$ which we prove below by exhibiting an irreducible positive energy $1$-twisted $V$-module $M$ such that $M_0$ is 2 dimensional. The unit element of $\zhu_1(V)$ is $\vac$ and we readily compute that $\vp * \vp = \tfrac{1}{2} \vac$. Therefore $\zhu_1(V) \cong \C[\suv] / (\suv^2 = 1)$, where $1$ is the image of $\vac$ and $\suv$ is the image of $\sqrt{2} \vp$.

We construct $M$ as follows: it has basis
\[
\vp^M_{n_1} \cdots \vp^M_{n_s} 1
\]
where $n_i \in \Z$, $n_1 < \ldots < n_s \leq 0$ and the parity of this monomial is $s \bmod 2$. The modes of the field
\[
Y^M(\vp, z) = \sum_{n \in \Z} \vp^M_n z^{-n-1}
\]
satisfy $\vp^M_n \vac = 0$ for $n \geq 1$ and $\vp^M_m \vp^M_n + \vp^M_n \vp^M_m = \delta_{m, -n}$. Note that $M_0 = \C 1 + \C \vp^M_0 1$. The unique irreducible $\zhu_{1}(V)$-module is $N = M_0$ and the corresponding $1$-twisted $V$-module is $L(N) = M$.

Let $h = 1$. According to our prescription $\ga = 1$. Hence $\CC(1, 1; u)$ is spanned by
\[
\tr_{M} u^M_0 \suv q^{L_0 - \mathfrak{c}/24}.
\]
Explicitly $\suv : M \rightarrow M$ is
\[
\suv : \vp^M_{n_1} \cdots \vp^M_{n_s} 1 \mapsto \sqrt{2} \vp^M_{n_1} \cdots \vp^M_{n_s} \vp^M_0 1.
\]

Let $h = \sigma_V$. In this case $h|_{N_0} = \text{id}_{N_0}$ but $h(\suv) = -\suv$, so $\ga_N = \sigma_N \suv$ which extends to $\ga = \sigma_M \suv$. Hence $\CC(1, \sigma_V; u)$ is spanned by
\[
\tr_M u^M_0 \ga \sigma_M \suv q^{L_0 - \mathfrak{c}/24} = -\tr_M u^M_0 q^{L_0 - \mathfrak{c}/24}.
\]

\subsection{Conformal blocks in weights $0$ and $1/2$}

We can evaluate the traces and supertraces above using some simple combinatorics. In terms of the Dedekind eta function defined by equation(\ref{etadefn}) we obtain
\begin{align*}
&\text{$\CC(\sigma_V, 1; \vac)$ is spanned by}
\,\,
q^{-1/48} \sch(V)
= q^{-1/48} \prod_{n \geq 0} (1 - q^{n+1/2})
= \frac{\eta(\tau/2)}{\eta(\tau)}, \\
&\text{$\CC(\sigma_V, \sigma_V; \vac)$ is spanned by}
\,\,
q^{-1/48} \ch(V)
= q^{-1/48} \prod_{n \geq 0} (1 + q^{n+1/2})
= \frac{\eta(\tau)^2}{\eta(2\tau)\eta(\tau/2)}, \\
&\text{$\CC(1, 1; \vac) = 0$}, \\
\text{and\,\,}&\text{$\CC(1, \sigma_V; \vac)$ is spanned by}
\,\,
q^{-1/48} q^{1/16} \ch(M)
= q^{-1/24} \prod_{n \geq 0} (1 + q^n)
= 2\frac{\eta(\tau)}{\eta(2\tau)}.
\end{align*}
The third equality is because the trace of an odd linear map vanishes, and in the fourth we have used $L_0|_{M_0} = 1/16$ which follows by direct computation with the Borcherds identity.

Theorem \ref{mythm} implies that if $f(\tau) \in \CC(g, h; \vac)$ then $f(A\tau) \in \CC((g, h) \cdot A; \vac)$ for all $A \in SL_2(\Z)$. This may be verified directly for the generators $T = \left(\begin{smallmatrix}1 & 1 \\ 0 & 1\end{smallmatrix}\right)$ and $S = \left(\begin{smallmatrix}0 & -1 \\ 1 & 0\end{smallmatrix}\right)$ using the explicit forms above together with Proposition \ref{etatrans}.


Since $L_{[0]}\vp = L_0 \vp$, we have $\DD_\vp = \D_\vp = 1/2$. If $(g, h) \neq (1, 1)$ then $\CC(g, h; \vp)$ is spanned by the (super)trace of an odd linear map, and so is $0$. The operator $\vp^M_0 \suv$ on $M$ is explicitly (ignoring a factor of $\sqrt{2}$)
\begin{align*}
\vp^M_{n_1} \cdots \vp^M_{n_s} 1
\mapsto & \vp^M_0 \vp^M_{n_1} \cdots \vp^M_{n_s} \vp^M_0 1 = \tfrac{1}{2} \eps \vp^M_{n_1} \cdots \vp^M_{n_s} 1
\end{align*}
where $\eps = (-1)^{s-1}$ if $n_s = 0$ and $(-1)^s$ otherwise. Therefore
\begin{align*}
&\text{$\CC(1, 1; \vp)$ is spanned by}\,\,q^{-1/48} q^{1/16} \prod_{n=1}^\infty (1-q^n) = \eta(\tau), \\
&\text{$\CC(g, h; \vac) = 0$} \quad \text{for all other pairs $(g, h)$}. \\
\end{align*}


\section{The charged free fermions VOSA with real conformal weights} \label{ex2}

As a vector superspace, the charged free fermions VOSA $V = F_{\text{ch}}^a(\psi, \psi^*)$ (\cite{Kac}, pg. 98) is the span of the monomials
\begin{align} \label{monomial}
\psi_{(i_1)} \psi_{(i_2)} \cdots \psi_{(i_m)} \psi^*_{(j_1)} \psi^*_{(j_2)} \cdots \psi^*_{(j_n)} \vac,
\end{align}
where $i_r, j_s \in \Z$, $i_1 < \ldots < i_m \leq -1$, $j_1 < \ldots < j_n \leq -1$, and the parity of the monomial is $(m + n) \bmod 2$. The VOSA structure is generated by the two odd fields
\[
Y(\psi, z) = \sum_{n \in \Z} \psi_{(n)} z^{-n-1} \quad \text{and} \quad Y(\psi^*, z) = \sum_{n \in \Z} \psi^*_{(n)} z^{-n-1}.
\]
The action of the modes on $V$ is by left multiplication, subject to the relations $\psi_{(n)} \vac = \psi^*_{(n)} \vac = 0$ for $n \geq 0$, and the commutation relations
\begin{align} \label{commrel}
[\psi_{(m)}, \psi^*_{(n)}] = \psi_{(m)} \psi^*_{(n)} + \psi^*_{(n)} \psi_{(m)} = \delta_{m+n, -1} \Longleftrightarrow [Y(\psi, z), Y(\psi^*, w)] = \delta(z, w).
\end{align}
All other commutators vanish.

Let $0 < a < 1$ be a real number. We define a Virasoro vector (\cite{Kac}, pg. 102)
\begin{align} \label{virdefn}
\om^a = a \left( \psi_{(-2)} \psi^*_{(-1)} \vac \right) + (1-a) \left( \psi^*_{(-2)} \psi_{(-1)} \vac \right)
\end{align}
and write $L^a(z) = Y(\om^a, z)$. With respect to this choice of Virasoro vector we have
\[
\D_\psi = 1-a \quad \text{and} \quad \D_{\psi^*} = a.
\]
The central charge of $V$ is $\mathfrak{c} = -2(6a^2-6a+1)$. This VOSA is $C_2$-cofinite. Note that $\om^a$ may be defined for any $a \in \R$ and is a Virasoro vector. However if $a$ lies outside the interval $(0, 1)$ then the conformal weights of $V$ might be unbounded below and its graded pieces might be infinite dimensional. We wish to exclude these cases.


\subsection{Twisted modules}

Fix $\mu, \la \in \C$ of unit modulus and let the automorphisms $g$ and $h$ of $V$ be defined by
\begin{align*}
g(\psi) = \mu^{-1} \psi, \quad g(\psi^*) = \mu \psi^*, \\
h(\psi) = \la^{-1} \psi, \quad h(\psi^*) = \la \psi^*,
\end{align*}
extended to all $n^\text{th}$ products (note that the Virasoro element is indeed fixed by $g$ and $h$). Let $\mu = e^{2\pi i \delta}$ and $\la = e^{2\pi i \rho}$ where $\delta, \rho \in [0, 1)$, we do not require these to be rational numbers. Hence $g$ and $h$ need not have finite order.

A $g$-twisted $V$-module $M$ will have fields
\begin{align*}
Y^M(\psi, z) = \sum_{n \in -[\delta]} \psi^M_{n} z^{-n-(1-a)} \quad
\text{and} \quad Y^M(\psi^*, z) = \sum_{n \in [\delta]} \psi^{*M}_{n} z^{-n-a},
\end{align*}
and the modes must satisfy
\begin{align} \label{psicommrels}
[\psi^M_{m}, \psi^{*M}_{n}] = \psi^M_{m} \psi^{*M}_{n} + \psi^{*M}_{n} \psi^M_{m} = \delta_{m+n, 0}.
\end{align}

Let us put $M = V$, $\psi^M(z) = z^{-x} \psi(z)$ and $\psi^{*M}(z) = z^{x} \psi^*(z)$ where $x \in -[\delta]-[a]$. We may easily confirm that
\begin{align*}
\psi^M_{n} = \psi_{(n-a-x)} \quad \text{and} \quad \psi^{*M}_{n} = \psi^*_{(n-1+a+x)}
\end{align*}
have the correct commutation relations (\ref{psicommrels}). Therefore $M = M^{(g)} = V$ is given the structure of a $g$-twisted positive energy $V$-module (clearly irreducible too), as long as we choose $x$ so that $\psi^M_n$ and $\psi^{*M}_n$ annihilate $\vac$ for $n > 0$. This leads to the requirement $0 \leq a+x \leq 1$.

For $g \neq 1$ this requirement fixes $x$ uniquely. If $g = 1$ then $\delta = 0$ and we have the choice of putting $x = 1-a$ or $x = -a$, but both lead to the same module up to isomorphism. Indeed if $x  =1-a$ then $\psi^M_{n} = \psi_{(n-1)}$ and $\psi^{*M}_{n} = \psi^*_{(n)}$, so $M_0 = \C \vac + \C \psi$. If $x = a$ then $M_0 = \C \vac + \C \psi^*$. Then $f : M_0 \rightarrow M_0$ defined by $f(\vac) = \psi^*$ and $f(\psi) = \vac$ lifts to an equivalence from the first to the second module. It is convenient to use the $x = 1-a$ case, since then in all cases we have the uniform formula $a + x = 1 - \delta$.


\subsection{Zhu algebras}

Let $g \neq 1$. If $\eps_v + \eps_\psi = -1$ then
\[
\psi \circ_n v = \res_w w^n (1+w)^{\D_\psi + \eps_\psi} Y(\psi, z)v dw \in J_g.
\]
It is possible to write $\psi_{(n)}v$ as a linear combination of $\psi_{(k)}v$ for $k > n$ whenever $n \leq -1$. The same goes for $\psi^*$. Iterating this procedure reveals that $\zhu_g(V)$ is a quotient of $\C \vac$. The existence of the positive energy $g$-twisted $V$-module exhibited above shows that $\zhu_g(V) = \C \vac$.

Let $g = 1$. The same argument as above holds but with $n \leq -2$. Thus $\zhu_1(V)$ is a quotient of $\C \vac + \C \psi + \C \psi^* + \C \psi_{(-1)}\psi^*$. Calculating the products of these four elements reveals that their span is isomorphic to $\en(\C^{1|1})$ via $\psi \mapsto E_{21}$, $\psi^* \mapsto E_{12}$ and $\psi_{(-1)}\psi^* \mapsto (a-1)E_{11} + aE_{22}$. Since we have constructed an irreducible positive energy $1$-twisted $V$-module we have $\zhu_1(V) \cong \en(\C^{1|1})$.

Note that Theorem \ref{mythm} has nothing to say unless $g, h$ have finite order and $a \in \R$ is chosen to lie in $\Q$. Even so we can write down supertrace functions in general. The Zhu algebras are all of {\sf Type I}, so there is a single supertrace function associated to $(g, h)$ and it is
\[
\str_M u^M_0 \ga q^{L_0 - \mathfrak{c}/24},
\]
where $M$ is the unique irreducible $g$-twisted positive energy $V$-module. Here $\ga = h^{-1}$, which we see from equation (\ref{intertwine}) plus the fact that $h$ restricts to the identity on $\zhu_g(V)$. 

Now we restrict attention to $u = \vac$ and write
\[
\chi_{\mu, \la}(\tau) := \str_M h^{-1} q^{L_0 - \mathfrak{c}/24}
\]
for brevity. We shall express the twisted supercharacters $\chi_{\mu, \la}(\tau)$ in terms of Jacobi theta functions and derive modular transformations.


\subsection{Twisted Supercharacters}


Recall the \emph{conformal weight} $h = h(M)$ of an irreducible $V$-module $M$, defined to be the eigenvalue of $L^M_0$ on the lowest graded piece of $M$. We use the Borcherds identity to compute
\begin{align} \label{confweight}
h(M) = \tfrac{1}{2} (\delta-a) (\delta+a-1)
=  \tfrac{1}{2} \left[ x(x-1) + 2ax \right]
\end{align}
for the twisted $V$-modules $M$ described above.

Indeed, put $u = \psi$, $v = \psi^*$ (so that $[\eps_u ]= -[\delta]$ and $[\eps_v] = [\delta]$) in the Borcherds identity (\ref{borcherds}). Let $m = -\delta$, $k = \delta$ and denote by $\LHS(n)$ the left hand side of (\ref{borcherds}) with these choices of $u$, $v$, $m$, and $k$. We have
\begin{align*}
\LHS(-1) &= (\psi_{(-1)}\psi^*)^M_0 - (\delta+a) (\psi_{(0)}\psi^*)^M_0 \\
\text{and} \quad \LHS(-2) &= (\psi_{(-2)}\psi^*)^M_0 - (\delta+a) (\psi_{(-1)}\psi^*)^M_0 + \tfrac{1}{2}(\delta+a)(\delta+a+1) (\psi_{(0)}\psi^*)^M_0.
\end{align*}
Rearranging and using $\psi_{(0)}\psi^* = \vac$ yields
\begin{align*}
(\psi_{(-2)}\psi^*)_0 = \LHS(-2) + (\delta+a) \LHS(-1) + \tfrac{1}{2}(\delta+a)(\delta+a-1).
\end{align*}

The corresponding right hand side of (\ref{borcherds}) is
\begin{align*}
\RHS(n) = \sum_{j \in \Z_+} (-1)^j \binom{n}{j} \left[ \psi_{-\delta+n-j} \psi^*_{\delta+j-n} + (-1)^n \psi^*_{\delta-j} \psi_{-\delta+j} \right].
\end{align*}
If we apply this to $\vac$, then the first term vanishes and the second term equals $(-1)^n$. Therefore, when applied to $\vac$,
\begin{align*}
(\psi_{(-2)}\psi^*)_0 = 1 - (\delta+a) + \tfrac{1}{2}(\delta+a)(\delta+a-1).
\end{align*}

A similar calculation shows that, when applied to $\vac$,
\begin{align*}
(\psi^*_{(-2)}\psi)_0 = -(\delta+a) + \tfrac{1}{2}(\delta+a)(\delta+a+1).
\end{align*}
Combining these with (\ref{virdefn}) yields (\ref{confweight}) above (having also used $a + x = 1 - \delta$).


Let $\phi(\tau) = \prod_{n=1}^\infty (1 - q^n)$, so that $\eta(\tau) = q^{1/24} \phi(\tau)$.

Applying $\psi_{(-n)} = \psi^M_{-n+a+x}$ to a monomial in $V$ raises its $L_0$-eigenvalue by $n-a-x = n - (1-\delta)$. Similarly $\psi^*_{(-n)} = \psi^{*M}_{-n+1-a-x}$ raises the eigenvalue by $n-\delta$. We have
\[
\chi_{\mu, \la}(\tau) = q^{h(M) - \mathfrak{c}/24} \prod_{n=1}^\infty (1 - \la q^{n-(1-\delta)}) \prod_{n=1}^\infty (1 - \la^{-1} q^{n-\delta}).
\]
The first product is the contribution of the $\psi$ terms, the second is that of the $\psi^*$ terms. Note that when $g = h = 1$ the supercharacter vanishes.

Recall the Jacobi triple product identity:
\[
\prod_{m=1}^\infty (1-z^{2m})(1 + z^{2m-1} y^2)(1 + z^{2m-1} y^{-2}) = \sum_{n \in \Z} z^{n^2} y^{2n}.
\]
Set $z = q^{1/2}$, and $y^2 = -\la q^{\delta - 1/2}$. We obtain
\begin{align*}
\chi_{\mu, \la}(\tau)
&= \frac{q^{h(M) - \mathfrak{c}/24}}{\phi(q)} \sum_{n \in \Z} (-\la)^n q^{n^2/2 + (\delta - 1/2)n} \\
&= \frac{e^{2\pi i [h(M) - (\mathfrak{c}-1)/24] \tau}}{\eta(\tau)} \theta((\delta - \tfrac{1}{2})\tau + (\rho - \tfrac{1}{2}); \tau)
\end{align*}
where $\theta(z; \tau)$ is the Jacobi theta function defined by (\ref{Jacdefn}).


\subsection{Modular transformations}

Let $A = \delta - \tfrac{1}{2}$ and $B = \rho - \tfrac{1}{2}$. Using Proposition \ref{Jactrans} we have
\begin{align*}
\chi_{\mu, \la}(\tau + 1)
&= \frac{e^{2\pi i [h - (\mathfrak{c}-1)/24] (\tau+1)}}{\eta(\tau+1)} \theta(A(\tau+1) + B; \tau+1) \\
&= e^{2\pi i [h - (\mathfrak{c}-1)/24]} \frac{e^{2\pi i [h - (\mathfrak{c}-1)/24] \tau}}{e^{\pi i / 12} \eta(\tau)} \theta(A\tau + (B + A + \tfrac{1}{2}); \tau) \\
&= e^{2\pi i [h - \mathfrak{c}/24]} \frac{e^{2\pi i [h - (\mathfrak{c}-1)/24] \tau}}{\eta(\tau)} \theta((\delta - \tfrac{1}{2})\tau + (\delta + \rho - \tfrac{1}{2}); \tau).
\end{align*}
This is proportional to $\chi_{\mu, \la\mu}(\tau)$ the $\la\mu$-twisted supercharacter of the irreducible $\mu$-twisted $V$-module.

Using Proposition \ref{Jactrans} again we have
\begin{align*}
\chi_{\mu, \la}(-1/\tau)
&= \frac{e^{-2\pi i [h - (\mathfrak{c}-1)/24] / \tau}}{\eta(-1/\tau)} \theta((B\tau - A) / \tau; -1/\tau) \\
&= \frac{e^{-2\pi i [h - (\mathfrak{c}-1)/24] / \tau}}{(-i\tau)^{1/2} \eta(\tau)} (-i\tau)^{1/2} e^{\pi i (B\tau-A)^2 / \tau} \theta(B\tau-A; \tau) \\
&= e^{-2\pi i AB} e^{-2\pi i [h - (\mathfrak{c}-1)/24 - A^2 / 2] / \tau} \frac{e^{2\pi i [B^2/2] \tau}}{\eta(\tau)} \theta(B\tau-A; \tau).
\end{align*}
Recall the formula for $\mathfrak{c}$ in terms of $a$, we use it to obtain
\begin{align} \label{miracle}
\begin{split}
\tfrac{1}{24}(\mathfrak{c}-1) + \tfrac{1}{2} A^2
&= \tfrac{1}{8}(-4a^2+4a-1) + \tfrac{1}{2}(\delta - \tfrac{1}{2})^2 \\
&= \tfrac{1}{2}(\delta - \tfrac{1}{2})^2 - \tfrac{1}{2}(a - \tfrac{1}{2})^2 \\
&= \tfrac{1}{2}(\delta-a)(\delta+a-1) = h(M).
\end{split}
\end{align}
Hence
\begin{align*}
\chi_{\mu, \la}(-1/\tau)
&= e^{-2\pi i AB} \frac{e^{2\pi i [B^2/2] \tau}}{\eta(\tau)} \theta(B\tau-A; \tau).
\end{align*}
But by calculation (\ref{miracle}) again, $\tfrac{1}{2}B^2 = h(M') - \tfrac{1}{24}(\mathfrak{c} - 1)$ where $M'$ is the irreducible $\la$-twisted $V$-module. Therefore $\chi_{\mu, \la}(-1/\tau)$ is proportional to $\chi_{\la, \mu^{-1}}(\tau)$ the $\mu^{-1}$-twisted supercharacter of the irreducible positive energy $\la$-twisted $V$-module.

In summary: $\chi_{1, 1} = 0$ and for any $(\la, \mu) \neq (1, 1)$ and $A \in SL_2(\Z)$ we have
\begin{align*}
\chi_{\mu, \la}(A\tau) \propto \chi_{(\mu, \la)\cdot A}(\tau).
\end{align*}
If $a \in \Q$ our VOSA has rational conformal weights, and if $\mu$ and $\la$ are roots of unity then $g, h$ lie in some finite group of automorphisms of $V$. In this case the modular transformations we have just derived follow from Theorem \ref{mythm}. The computation holds in general though.


\section{VOSAs associated to integral lattices} \label{ex3}

Let $(Q, \left<\cdot, \cdot\right>)$ be a rank $r$ integral lattice with positive definite bilinear form $\left<\cdot, \cdot\right>$. Let $\h = Q \otimes_\Z \C$ with the induced positive definite bilinear form $\left<\cdot, \cdot\right>$. The \emph{loop algebra} $\tilde{\h} = \h[t^{\pm 1}]$ is equipped with a Lie bracket as follows:
\[
[ht^m, h't^n] = m \left<h, h'\right> \delta_{m, -n}.
\]
Let $S_-(\h) = U(\tilde{\h}) / U(\tilde{\h}) \h[t]$. We write $h_m$ for $h t^m$. Explicitly $S_-(\h)$ has a basis of monomials
\[
h^1_{n_1} \cdots h^s_{n_s} 1
\]
where the $h^i$ range over a basis of $\h$, and $n_1 \leq \ldots n_s \leq -1$ are integers.

The \emph{twisted group algebra} $\C_\eps[Q]$ of $Q$ is a unital associative algebra with basis $\{e^\alpha | \al \in Q\}$, unit element $1 = e^0$, and multiplication $e^\alpha e^\beta = \eps(\alpha, \beta) e^{\alpha + \beta}$ where the function $\eps : Q \times Q \rightarrow \{\pm 1\}$ has been chosen to satisfy
\begin{itemize}
\item $\eps(0, a) = \eps(a, 0) = 1$ for all $a \in Q$,

\item $\eps(a, b)\eps(a+b, c) = \eps(a, b+c)\eps(b, c)$ for all $a, b, c \in Q$,

\item $\eps(a, b) = \eps(b, a) (-1)^{\left<a, b\right> + \left<a, a\right>\left<b, b\right>}$ for all $a, b \in Q$.
\end{itemize}
It may be shown that such $\eps$ exists.

Associated to this data there is a VOSA (\cite{Kac}, pg. 148).
\begin{defn}
The lattice VOSA $(V_Q, \vac, Y, \om)$ associated to $Q$ is defined to be $V_Q = S_-(\h) \otimes \C_\eps[Q]$ as a vector superspace, where the parity of $s \otimes e^\al$ is $\left<\al, \al\right> \bmod{2}$. The vacuum vector is $\vac = 1 \otimes 1$. Let $h \in \h$, $\al \in Q$, and $n \in \Z$. Define $h_n : V_Q \rightarrow V_Q$ by
\begin{align*}
h_n(s \otimes e^\al) &= (h_n s) \otimes e^\al \quad \text{for $n < 0$}, \\
h_n(1 \otimes \C_\eps[Q]) &= 0 \quad \text{for $n > 0$}, \\
h_0(1 \otimes e^\al) &= \left<h, \al\right> 1 \otimes e^\al, \\
\text{and} \quad [h_m, h'_n] &= m \left<h, h'\right> \delta_{m, -n}.
\end{align*}
Put $h(z) = \sum_{n \in \Z} h_n z^{-n-1}$ and
\[
\Gamma_\al(z) = e^\al z^{\al_0} \exp\left[-\sum_{j < 0} \frac{z^{-j}}{j} \al_j\right] \exp\left[-\sum_{j > 0} \frac{z^{-j}}{j} \al_j\right],
\]
where by definition $e^\al(s \otimes e^\beta) = \eps(\al, \beta) (s \otimes e^{\al + \beta})$. The state-field correspondence is given by $Y(h \otimes 1, z) = h(z)$, $Y(1 \otimes e^\al, z) = \Gamma_\al(z)$, and is extended to all of $V_Q$ by normally ordered products, i.e.,
\begin{align*}
Y(h_na, w) = \res_z \left[ h(z) Y(u, w) i_{z, w}(z-w)^n - Y(u, w) h(z) i_{w, z}(z-w)^n \right] dz.
\end{align*}
Finally, the Virasoro vector is
\[
\om = \frac{1}{2} \sum_{i=1}^r a^i_{(-1)} b^i_{(-1)} \vac
\]
where $\{a^i\}$ and $\{b^i\}$ are bases of $\h$ dual under $\left<\cdot, \cdot\right>$, i.e., $\left<a^i, b^j\right> = \delta_{ij}$.
\end{defn}

Some commutators between the generating fields are
\begin{align*}
[h(z), h'(w)] &= \left<h, h'\right> \partial_w \delta(z, w), \\
[h(z), \G_\al(w)] &= \left<\al, h\right> \G_\al(w) \delta(z, w).
\end{align*}
There is an explicit expression for $[\G_\al(z), \G_\beta(w)]$ which we will not require. The conformal weight of $h = h_{-1} \vac \in S_-(\h) \otimes 1$ is $1$, the conformal weight of $1 \otimes e^\al$ is $\left<\al, \al\right> / 2$. The central charge of $V$ equals the rank $r$ of $Q$. The lattice VOSAs are known to be $C_2$-cofinite.


\subsection{Irreducible modules and their (super)characters}

Let $G = \{1, \sigma_V\}$. It is explained in \cite{Kac} that $V_Q$ is $\sigma_V$-rational. If $Q$ is an even lattice, i.e., $\left<\al, \al\right> \in 2\Z$ for all $\al \in Q$, then $V_Q$ is purely even and so $\sigma_V = 1$. We focus on the case $Q$ is not even. The same proof as in \cite{Kac} shows that $V_Q$ is also $1$-rational.

Let $Q^\circ \subseteq \h$ be the lattice dual to $Q$ and let $\delta \in Q^\circ$. We define
\begin{align} \label{latticetwist}
\begin{split}
Y^\delta(h, z) &= h(z) + \left<\delta, h\right> z^{-1}, \\
Y^\delta(e^\al, z) &= z^{\left<\delta, \al\right>} \G_\al(z).
\end{split}
\end{align}
Under this modified state-field correspondence $V$ acquires the structure of a positive energy $\sigma_V$-twisted $V$-module (the fields $Y^\delta(u, z)$ involve only integer powers of $z$) which depends on $\delta$ only through $\delta + Q$. Via this construction the cosets of $Q^\circ$ modulo $Q$ are in bijection with the irreducible positive energy ($\sigma_V$-twisted) $V_Q$-modules \cite{Kac}. The Virasoro field acts on $(V, Y^\delta)$ as
\begin{align*}
L^\delta(z) = L(z) + \frac{1}{2} z^{-1} \sum_{i=1}^r \left[\left<\delta, b^i\right> a^i(z) + \left<\delta, a^i\right> b^i(z) \right] + \frac{\left<\delta, \delta\right>}{2} z^{-2}.
\end{align*}
From this we see that the $L^\delta_0$-eigenvalue of $1 \otimes e^\al$ is $\left<\al+\delta, \al+\delta\right>/2$. The $L^\delta_0$-eigenvalue of $h \in \h$ is $1$ as before.

In a similar way the irreducible positive energy $1$-twisted $V$-modules are exactly $(V, Y^\rho)$ (defined as in (\ref{latticetwist})) but for $\rho \in \h$ satisfying
\begin{align} \label{Ramweight1}
\left<2\rho, \al\right> \equiv \left<\al, \al\right> \pmod 2
\end{align}
for every $\al \in Q$.

Let $\{a^i\}$ be a basis of $Q$ and let $p(a^i)$ denote the parity of $\left<a^i, a^i\right>$. Let $\{b^i\}$ be the basis of $Q^\circ$ dual to $\{a^i\}$, and let
\[
\rho = \frac{1}{2} \sum_{p(a^i)=1} b^i + \sum_{p(a^i) = 0} b^i.
\]
Clearly $\rho$ satisfies equation (\ref{Ramweight1}) for $\al \in \{a^i\}$. Now let $\al = \sum_i k_i a^i$ where $k_i \in \Z$. Then
\begin{align*}
\left<2\rho, \al\right> = \sum_{i} k_i \left<2\rho, a^i\right> \quad \text{and} \quad \left<\al, \al\right> = \sum_{i, j} k_i k_j \left<a^i, a^j\right>
\end{align*}
but
\begin{align*}
\sum_{i, j} k_i k_j \left<a^i, a^j\right> \equiv \sum_i k_i^2 \left<a^i, a^i\right> \equiv \sum_i k_i \left<a^i, a^i\right> \equiv \sum_{i} k_i \left<2\rho, a^i\right> \pmod 2,
\end{align*}
so $\rho$ satisfies (\ref{Ramweight1}) for all $\al \in Q$. Let $Q^\bullet$ be the set of all elements of $\h$ satisfying (\ref{Ramweight1}) for all $\al \in Q$.
 As we have taken $Q$ not even, $Q^\bullet \neq Q^\circ$ and $Q^\circ \cup Q^\bullet$ is a lattice containing $Q^\circ$ as an index $2$ sublattice.

All simple components of $\zhu_g(V)$ (and hence all irreducible $V$-modules) are $h$-invariant for each $h \in G$. For $h = 1$ this is obvious, and for $h = \sigma_V$ it follows because each simple component has a unit element which is even and hence fixed by $\sigma_V$. Let us consider $\CC(g, h; \vac)$ only, so that we may ignore any contribution of modules of {\sf Type II}. Then if $h = 1$ (resp. $\sigma_V$) we have $\ga = 1$ (resp. $\sigma_M$). The space $\CC(g, h; \vac)$ is spanned by $\str_M h q^{L^\delta_0 - \mathfrak{c}/24}$ where $M$ ranges over the set of irreducible $g$-twisted positive energy $V$-modules.

The bosonic part $S_-(\h)$ of the tensor product $S_-(\h) \otimes \C_\eps[Q]$ is purely even and $q^{-\mathfrak{c}/24} \tr_{S_-(\h)} q^{L_0} = \eta(\tau)^{-r}$. The contribution of $\C_\eps[Q]$ is given in terms of classical lattice theta functions:
\begin{align*}
\Theta^{\text{even}}_{\delta, Q}(q) &= \tr_{\C_\eps[Q]} q^{L^\delta_0} = \sum_{\al \in Q} e^{\pi i \tau \left<\al+\delta, \al+\delta\right>} \\
\quad \text{and} \quad
\Theta^{\text{odd}}_{\delta, Q}(q) &= \str_{\C_\eps[Q]} q^{L^\delta_0} = \sum_{\al \in Q} e^{\pi i \tau \left<\al+\delta, \al+\delta\right>} e^{\pi i \left<\al, \al\right>}
\end{align*}
We see that $\CC(g, h; \vac)$ is spanned by
\begin{align*}
\begin{array}{lll}
\Theta^{\text{even}}_{\delta, Q}(q) / \eta(\tau)^r  & \text{for $\delta \in Q^\circ / Q$} & \text{if $(g, h) = (\sigma_V, \sigma_V)$}, \\
\Theta^{\text{even}}_{\delta, Q}(q) / \eta(\tau)^r  & \text{for $\delta \in Q^\bullet / Q$} & \text{if $(g, h) = (1, \sigma_V)$}, \\
\Theta^{\text{odd}}_{\delta, Q}(q) / \eta(\tau)^r  & \text{for $\delta \in Q^\circ / Q$} & \text{if $(g, h) = (\sigma_V, 1)$}, \\
\Theta^{\text{odd}}_{\delta, Q}(q) / \eta(\tau)^r  & \text{for $\delta \in Q^\bullet / Q$} & \text{if $(g, h) = (1, 1)$}. \\
\end{array}
\end{align*}
The transformation
\[
\sum_{\al \in Q} e^{-i \pi \left<\al, \al\right> / \tau} = (\disc Q)^{-1/2} (-i\tau)^{r/2} \sum_{\beta \in Q^\circ} e^{i\pi \tau \left<\beta, \beta\right>}
\]
of the usual lattice theta function under $\tau \mapsto -1/\tau$ is proved using Poisson summation \cite{Igusa} (here $\disc Q$ is the discriminant, defined to be the determinant of the Gram matrix of an integral basis of $Q$). In the same way the $SL_2(\Z)$ transformations of $\Theta^{\text{even}}$ and $\Theta^{\text{odd}}$ may be deduced and it is confirmed that
\[
\slmat : \CC(g, h; \vac) \rightarrow \CC(g^\text{\sf a}h^\text{\sf c}, g^\text{\sf b}h^\text{\sf d}; \vac).
\]

\clearpage

\end{document}